\newtheorem{theorem}{\bf Theorem}[section]
\newtheorem{proposition}[theorem]{\bf Proposition}
\newtheorem{lemma}[theorem]{\bf Lemma}
\newtheorem{corollary}[theorem]{\bf Corollary}
\newtheorem{definition}[theorem]{\bf Definition}
\newtheorem{remark}[theorem]{\bf Remark}
\newenvironment{proofof}[1]{\noindent{\it Proof of
#1.}}{\hfill$\square$\\\mbox{}}
\title{Syzygies for the vector invariants of the dihedral group}
\author[M\'aty\'as Domokos]
{M\'aty\'as Domokos}
\address{Alfr\'ed R\'enyi Institute of Mathematics,
Re\'altanoda utca 13-15, 1053 Budapest, Hungary,
ORCID iD: https://orcid.org/0000-0002-0189-8831}
\email{domokos.matyas@renyi.hu}
\begin{document}

\thanks{Partially supported by the Hungarian National Research, Development and Innovation Office,  NKFIH K 138828,  K 132002.}

\subjclass[2010]{Primary 13A50; Secondary 14L30, 20G05}
\keywords{$GL$-ideal, vector invariants, ideal of relations, dihedral group}

\maketitle

\begin{abstract}
The problem of finding generators of the $GL$-ideal of the relations between the generators of the algebra of invariants of the dihedral group acting on $m$-tuples of vectors from its defining $2$-dimensional representation is studied. It is shown that this $GL$-ideal is generated by relations depending on no more than $3$ vector variables. 
A minimal $GL$-ideal generating system is found for the case when $m=2$, 
and for the case of the dihedral group of order $8$ and arbitrary $m$. 
\end{abstract} 

\section{Introduction}\label{sec:intro} 

Given a group $G$ of linear transformations on a finite dimensional complex vector space $V$ and a positive integer $m$, consider the diagonal action of $G$ on the space $V^m$ of $m$-tuples of vectors from $V$. We call the algebra $R(m):=\mathbb{C}[V^m]^G$ of polynomial functions on $V^m$ constant along the $G$-orbits the \emph{algebra of vector invariants of} $G$. Following Weyl \cite{weyl} who gave a systematic study of the case when $G$ is one of the classical subgroups of the general linear group $GL(V)$, a description of the generators of $R(m)$ is referred to as a \emph{First Fundamental Theorem}, whereas a description of the generators of the ideal of relations between the generators of $R(m)$ 
 is referred to as a \emph{Second Fundamental Theorem} for the vector invariants of $G$. 
An indispensable tool to establish such theorems is to take into account 
a natural right action of the group $GL_m(\mathbb{C})$ of invertible $m\times m$ matrices on $V^m$ that commutes with the $G$-action: 
for $g=(g_{ij})_{i,j=1}^m \in GL_m(\mathbb{C})$ and $v=(v_1,\dots,v_m)\in V^m$ we set 
\[v\cdot g=(\sum_{i=1}^mg_{i1}v_i,\dots,\sum_{i=1}^mg_{im}v_i).\] 
This induces a left action of $GL_m(\mathbb{C})$ on $\mathbb{C}[V^m]$ via $\mathbb{C}$-algebra automorphisms. Namely, for $f\in \mathbb{C}[V^m]$, $v\in V^m$ and 
$g\in GL_m(\mathbb{C})$ we have $(g\cdot f)(v)=f(v\cdot g)$. 
The subalgebra $R(m)$ is a $GL_m(\mathbb{C})$-invariant subspace. Moreover, $R(m)$ is a graded subalgebra of $\mathbb{C}[V^m]$, where the latter is endowed with the standard grading. Write $R(m)_+$ for the maximal ideal of $R(m)$ spanned  by its homogeneous elements of positive degree. It is clearly a $GL_m(\mathbb{C})$-submodule, just like $(R(m)_+)^2$.  
Let $W(m)$ be a $GL_m(\mathbb{C})$-module direct complement of $(R(m)_+)^2$ in $R(m)_+$. 
Then the subspace $W(m)$ minimally generates the algebra $R(m)$. 
For example, for the case when $G=GL_3(\mathbb{C})$ acting on the space $V=\mathbb{C}^{3\times 3}$ of $3\times 3$ matrices by conjugation, an explicit description of a minimal homogeneous generating system of $R(m)$ for general $m$ is rather complicated. This was made transparent in \cite{abeasis-pittaluga} by determining 
the $GL_m(\mathbb{C})$-module structure of $W(m)$.   
Let us turn next to the relations between the generators of $R(m)$. 
The identity map $W(m)\to W(m)$ induces a $\mathbb{C}$-algebra surjection 
\begin{equation}\label{eq:varphi(m)} \varphi(m):S(W(m))\to R(m)\end{equation}  
from the symmetric tensor algebra $S(W(m))$ of $W(m)$ onto $R(m)$, and 
$\ker(\varphi(m))$ is the \emph{ideal of relations for the minimal generating subspace} $W(m)$ of $R(m)$. Clearly, $\ker(\varphi(m))$ is a \emph{$GL$-ideal}, i.e.  a $GL_m(\mathbb{C})$-stable ideal in $S(W(m))$. A sensible way to describe $\ker(\varphi(m))$ is to find elements in it which 
generate irreducible $GL_m(\mathbb{C})$-submodules of $S(W(m))$, such that the sum of these $GL_m(\mathbb{C})$-submodules is direct and forms a minimal generating subspace of the ideal $\ker(\varphi(m))$ (so in particular,  these elements  constitute a minimal generating system of $\ker(\varphi(m))$ as a $GL$-ideal).  For example,  in the special case $G=GL_3(\mathbb{C})$ and $V=\mathbb{C}^{3\times 3}$, the $GL_m(\mathbb{C})$-module structure of the minimal degree non-zero homogeneous component of $\ker(\varphi(m))$ was computed in \cite{benanti-drensky}. 

In this paper we study the problem of finding generators of the $GL$-ideal $\ker(\varphi(m))$ for the 
defining $2$-dimensional representation of the dihedral group $D_{2n}$ of order $2n$  ($n\ge 3$ is a positive integer). 
For the case $n=3$ the result  can be easily deduced from 
\cite{domokos-puskas}, see Theorem~\ref{thm:n=3}. For arbitrary $n$ and  $m=2$ the solution will be given in 
Theorem~\ref{thm:m=2}. Moreover, for arbitrary $n$ we reduce the problem to the case $m=3$ (see Theorem~\ref{thm:m=3 sufficient}).  
As an application of this reduction we give a complete solution of the problem when 
 $n=4$ in Theorem~\ref{thm:n=4}.

\section{First Fundamental Theorem for the dihedral group}\label{sec:fft for dihedral} 

Let $n\ge 3$ be a positive integer and $\omega$ a complex primitive $n$th root of $1$. 
We take for $G$ the dihedral group $D_{2n}$, the subgroup of  $GL_2(\mathbb{C})$ generated by the matrices 
$\left(\begin{array}{cc}\omega & 0 \\0 & \omega^{-1}\end{array}\right)$ and 
$\left(\begin{array}{cc} 0 & 1 \\ 1 & 0 \end{array}\right)$.  The given representation of $D_{2n}$ is the complexification of the defining real representation of $D_{2n}$ as 
the group of isometries of the Euclidean plane mapping a given regular $n$-gon into itself.  
Denoting by $x,y$ the coordinate functions on $V:=\mathbb{C}^2$, the algebra $\mathbb{C}[x,y]^{D_{2n}}$ of polynomial invariants is generated by the algebraically independent invariants 
\[q:=xy,\qquad p:=x^n+y^n.\] 
Consider now the diagonal action of $D_{2n}$ on the space $V^m:=V\oplus\cdots\oplus V$ ($m$ direct summands) of $m$-tuples of vectors. Denote by $x_i$ (respectively $y_i$) the function mapping an $m$-tuple of vectors to the first 
(respectively second) coordinate of its $i$th vector component, $i=1,\dots,m$. The coordinate ring $\mathbb{C}[V^m]$ is the $2m$-variable polynomial ring $\mathbb{C}[x_1,y_1,\dots,x_m,y_m]$. 
For a degree $d$ homogeneous element $g\in \mathbb{C}[x,y]$ and $\alpha=(\alpha_1,\dots,\alpha_m)$ with $\sum_j\alpha_j=d$ denote by $g_{\alpha}$ the multihomogeneous component of $\frac{1}{\binom{d}{\alpha}}g(x_1+\cdots+x_m,y_1+\cdots+y_m)$ having total degree $\alpha_j$ in $x_j,y_j$ for $j=1,\dots,m$; so 
\[g(x_1+\cdots+x_m,y_1+\cdots+y_m)=\sum_{\alpha}\binom{d}{\alpha}g_{\alpha}, \] 
where $\binom{d}{\alpha}=\frac{d!}{\alpha_1!\cdots \alpha_m!}$. 
The elements $g_{\alpha}\in \mathbb{C}[V^m]$ are called the \emph{polarizations} of $g$, and they form a $\mathbb{C}$-vector space basis in the $GL_m(\mathbb{C})$-submodule 
$\langle g\rangle_{GL_m(\mathbb{C})}$ of $\mathbb{C}[V^m]$ generated by $g$. 
Here we treat $\mathbb{C}[V]$ as the subalgebra of $\mathbb{C}[V^m]$ consisting of the polynomial functions that depend only on the first vector component of the $m$-tuples in $V^m$, and throughout the paper we write $\langle A\rangle_{GL_m(\mathbb{C})}$ for 
the $GL_m(\mathbb{C})$-submodule generated by the element or subset $A$ of a given $GL_m(\mathbb{C})$-module. 
 
For example, the polarizations of $p\in \mathbb{C}[x,y]^{D_{2n}}$ are 
\[p_{\alpha}=x_1^{\alpha_1}\cdots x_m^{\alpha_m}+y_1^{\alpha_1}\cdots y_m^{\alpha_m} 
\text{ for }\alpha\in\mathbb{N}_0^m \text{ with }\sum \alpha_i=n\] 
and the polarizations of $q\in \mathbb{C}[x,y]^{D_{2n}}$ for $m=2$ are 
\[q_{2,0}=x_1y_1,\quad q_{11}=\frac 12(x_1y_2+y_1x_2),\quad q_{0,2}=x_2y_2. \] 

Our starting point is the following known description of the generators of 
$\mathbb{C}[V^m]^{D_{2n}}$ due to Hunziker: 

\begin{theorem}\label{thm:hunziker} \cite[Theorem 4.1]{hunziker} The polarizations of $q$ and $p$ form a minimal homogeneous generating system of the  $\mathbb{C}$-algebra $\mathbb{C}[V^m]^{D_{2n}}$. 
\end{theorem}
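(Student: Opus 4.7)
I plan to prove generation and minimality separately. For generation, I would apply the Reynolds operator $\pi=\frac{1}{2n}\sum_{g\in D_{2n}}g$ to the spanning monomials $x^ay^b$ (with $a,b\in\mathbb{N}_0^m$). Writing $D_{2n}=C_n\rtimes\langle\sigma\rangle$, where $\sigma$ swaps $x\leftrightarrow y$ componentwise, a direct character computation gives $\pi(x^ay^b)=0$ unless $|a|-|b|\equiv 0\pmod n$, in which case $\pi(x^ay^b)=\frac12(x^ay^b+x^by^a)$. These symmetrized monomials therefore form a $\mathbb{C}$-spanning set of $\mathbb{C}[V^m]^{D_{2n}}$, and the task reduces to expressing each of them as a polynomial in the polarizations of $q$ and $p$.

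\textbf{Generation.} I would set up an induction on the pair $(d,\ell)$ with $d=|a|+|b|$ and $\ell=\max(|a|,|b|)$, ordered lexicographically. In the case $|a|=|b|$, I would pick indices $i,j$ with $a_i>0$ and $b_j>0$ and use the identity
\[2\,q_{e_i+e_j}\cdot\pi(x^{a-e_i}y^{b-e_j})=\pi(x^ay^b)+\pi(x^{a'}y^{b'}),\]
where $a'=a-e_i+e_j$ and $b'=b+e_i-e_j$, then choose $i,j$ so that $(a',b')$ strictly decreases in a secondary ordering on multi-indices. In the case $|a|-|b|=kn>0$, pick $\alpha\leq a$ with $|\alpha|=n$ and use
\[p_\alpha\cdot\pi(x^{a-\alpha}y^b)=\pi(x^ay^b)+\pi(x^{a-\alpha}y^{b+\alpha}).\]
The first summand exhibits $\pi(x^ay^b)$ as $p_\alpha$ times a degree-$(d-n)$ invariant handled by the inductive hypothesis, while the second, after applying $\pi(x^Ay^B)=\pi(x^By^A)$, becomes $\pi(x^{b+\alpha}y^{a-\alpha})$; with a careful choice of $\alpha$, this decreases in the secondary measure, enabling the induction.

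\textbf{Minimality.} The polarizations $q_\alpha$ all live in degree $2$, the minimum positive degree of $R(m)$, so they automatically lie outside $(R(m)_+)^2$; by the linear independence of polarizations they span an irreducible $GL_m(\mathbb{C})$-submodule that must sit inside any $W(m)$. For the $p_\alpha$'s I would use the $\mathbb{Z}_{\geq 0}^2$-bigrading of $\mathbb{C}[V^m]$ by $(x\text{-deg},\,y\text{-deg})$. Every $D_{2n}$-invariant is in particular $C_n$-invariant, so its bidegree-$(a,b)$ component vanishes unless $a-b\equiv 0\pmod n$. Hence for any two positive-degree invariants $f,g$, the bidegree-$(n,0)$ component of $fg$ is $\sum_j f_{(j,0)}g_{(n-j,0)}$, and each summand vanishes: unless $j\in\{0,n\}$ one factor is already zero, and when $j\in\{0,n\}$ one of $f_{(0,0)}$ or $g_{(0,0)}$ is zero, since a positive-degree invariant has no bidegree-$(0,0)$ part. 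But $(p_\alpha)_{(n,0)}=x^\alpha\neq 0$, so no $p_\alpha$ lies in $(R(m)_+)^2$; since $\langle p\rangle_{GL_m(\mathbb{C})}$ is irreducible, its entire intersection with $(R(m)_+)^2$ must be zero.

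\textbf{Main obstacle.} The technical heart is the generation induction: the reduction identities above do not always decrease the total degree, so one must devise a well-founded combination of degree with a combinatorial secondary invariant on multi-indices that is forced to descend at each step. The $k=1$ subcase is the most delicate, since a single application of the $p_\alpha$-identity can swap the roles of $x$ and $y$ without decreasing $\max(|a|,|b|)$; one must iterate or refine the choice of $\alpha$, and deal carefully with boundary configurations (e.g.\ where no admissible $i,j$ exist in the $|a|=|b|$ case because some coordinate is exhausted) by first factoring out a polarization of $p$.
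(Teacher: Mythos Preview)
The paper does not give a proof of this theorem: it is quoted from Hunziker \cite{hunziker}, with the single remark that the real result there implies the complex statement by standard base-change principles. So any self-contained argument is necessarily a different route from the paper's.

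Your minimality argument is correct and clean. The bidegree trick showing $(fg)_{(n,0)}=0$ for $f,g\in R(m)_+$ is valid because $C_n$-invariance forces $f_{(j,0)}=0$ for $0<j<n$, and positive degree forces $f_{(0,0)}=0$; combined with irreducibility of $\langle p\rangle_{GL_m(\mathbb C)}$ this gives exactly what is needed.

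The generation argument, however, has a genuine gap which you have identified but not closed. The inductive scheme on $(d,\ell)$ together with an unspecified ``secondary ordering'' does not terminate as written: the reduction identities can cycle. Concretely, in the $|a|=|b|$ case take $m=4$, $a=(1,1,0,0)$, $b=(0,0,1,1)$. Every admissible choice of $(i,j)$ sends $(a,b)$ to another pair with the same multiset of differences $\{a_k-b_k\}$, and iterating returns to $(a,b)$ (or its swap) without ever hitting a base case. The actual expression here,
\[
\tfrac12(x_1x_2y_3y_4+x_3x_4y_1y_2)=q_{e_1+e_3}q_{e_2+e_4}+q_{e_1+e_4}q_{e_2+e_3}-q_{e_1+e_2}q_{e_3+e_4},
\]
is a genuine three-term linear combination that no single application of your binary identity produces. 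Likewise, in the $|a|-|b|=n$ subcase the $p_\alpha$-identity swaps $(a,b)$ for $(b+\alpha,a-\alpha)$ with the same value of $\max(|a|,|b|)$, and iterating can loop. Saying ``one must devise a well-founded secondary invariant'' or ``iterate or refine the choice of $\alpha$'' is precisely the missing content.

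One way to close the gap is to handle the $|a|=|b|$ block separately via the $C_n$-invariants: the monomial algebra $\mathbb C[V^m]^{C_n}$ is easily seen to be generated by all $x_iy_j$ together with all $x^\alpha,y^\alpha$ with $|\alpha|=n$; averaging over $\sigma$ then reduces to checking a finite list of quadratic identities (products of two antisymmetrized generators such as $(x_iy_j-x_jy_i)(x_ky_l-x_ly_k)$) lie in the subalgebra generated by the $q_\gamma$ and $p_\beta$, which is a direct computation. Alternatively one can appeal, as the paper implicitly does, to Hunziker's proof for finite reflection groups.
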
 

\begin{remark} {\rm In fact \cite{hunziker} works over the real field, but by well known basic principles \cite[Theorem 4.1]{hunziker} implies the variant Theorem~\ref{thm:hunziker} above. }
\end{remark}  

Therefore in this case $W(m)$ (cf. Section~\ref{sec:intro}) is 
\[W(m)=\langle q\rangle_{GL_m(\mathbb{C})}\oplus \langle p\rangle_{GL_m(\mathbb{C})}
=\mathrm{Span}_{\mathbb{C}}\displaystyle\{q_{\alpha},p_{\beta}\mid \alpha,\beta\in \mathbb{N}_0^m,\ \sum_{i=1}^m\alpha_i=2,\ \sum_{j=1}^m\beta_j=n\}. 
\] 

We have the $GL_m(\mathbb{C})$-module isomorphisms 
\[\langle q\rangle_{GL_m(\mathbb{C})}\cong S^2(\mathbb{C}^m) \quad \text{ and }\quad \langle p\rangle_{GL_m(\mathbb{C})}\cong 
 S^n(\mathbb{C}^m)\] 
where $S^k(U)$ stands for the $k$th symmetric tensor power of the $GL(U)$-module $U$. 

Write $\mathcal{F}(n,m):=S(W(m))=\sum_{d=0}^\infty S^d(W(m))$ for the symmetric tensor algebra of $W(m)$ (endowed with the natural $GL_m(\mathbb{C})$-module structure induced by the $GL_m(\mathbb{C})$-action on $W(m)$).  
Although $S^1(W(m))\subset \mathcal{F}(n,m)$ is just $W(m)$, to avoid confusion later we need to distinguish in the notation the elements of $W(m)$ when they are considered as elements of $\mathcal{F}(n,m)$. We shall write $\rho_\alpha\in \mathcal{F}(n,m)$ for the element corresponding to $q_\alpha$, and $\pi_{\beta}\in \mathcal{F}(n,m)$ for the element corresponding to $p_\beta$. So $\mathcal{F}(n,m)$ is the polynomial ring 
\[\mathcal{F}(n,m)=\mathbb{C}[\rho_\alpha,\pi_\beta\mid \sum\alpha_i=2,\ \sum\beta_j=n].\] 
Thus in our case  $\varphi(m)$ (cf. \eqref{eq:varphi(m)} in Section~\ref{sec:intro}) is the $\mathbb{C}$-algebra (as well as $GL_m(\mathbb{C})$-module)   surjection 
\[\varphi(n,m):\mathcal{F}(n,m)\to \mathbb{C}[V^m]^{D_{2n}},\qquad \rho_\alpha\mapsto q_\alpha,\ \pi_\beta\mapsto p_\beta.\] 
Throughout the paper for $l\le m$ we shall treat $\mathbb{C}[V^l]$ and 
$\mathcal{F}(n,l)$ as a subalgebra of $\mathbb{C}[V^m]$ and $\mathcal{F}(n,m)$ in the obvious way. 
Our aim is to describe the kernel $\ker(\varphi(n,m))$ of $\varphi(n,m)$ as a $GL$-ideal. 

\section{Preliminaries on $GL_m(\mathbb{C})$-modules} \label{sec:Lie action}

As a general reference to the material in this section, see for example the book 
\cite{procesi}. 
Recall that the isomorphism classes of the irreducible polynomial $GL_m(\mathbb{C})$-modules of degree $d$ are labeled by partitions of $d$ with  $m$ parts, where by a partition $\lambda$ of $d$ with $m$ parts (notation: $\lambda\in\mathrm{Par}_m(d)$) we mean a sequence $(\lambda_1,\dots,\lambda_m)$ of non-negative integers with 
$\lambda_1\ge\cdots\ge\lambda_m$ and $\lambda_1+\cdots+\lambda_m=d$. 
We denote by 
$\mathrm{ht}(\lambda)$ the number of non-zero elements in the sequence 
$(\lambda_1,\dots,\lambda_m)$. Moreover, for $l<m$ and non-negative integers 
$\lambda_1\ge\cdots\ge\lambda_l$ with $\sum\lambda_i=d$ we identify the sequence $(\lambda_1,\dots,\lambda_l)$ with $(\lambda_1,\dots,\lambda_l,0,\dots,0)\in \mathrm{Par}_m(d)$. 

For an $\ell$-dimensional vector space $U$ write $S^{\lambda}(U)$  for the $GL(U)$-module associated to $U$ by the Schur functor $S^{\lambda}(-)$. This is an irreducible polynomial $GL(U)$-module if 
$\mathrm{ht}(\lambda)\le \ell$ and $S^{\lambda}(U)$ is the zero module when 
$\mathrm{ht}(\lambda)>\ell$. 
For example, for the partition $\lambda=(n,0,\dots)$ with only one non-zero part we have 
$S^{\lambda}(U)=S^n(U)$, the $n$th symmetric tensor power of $U$. 
Moreover, for $\lambda\in\mathrm{Par}_m(d)$, $S^{\lambda}(\mathbb{C}^m)$ is the irreducible polynomial $GL_m(\mathbb{C})$-module labeled by $\lambda$. 
Note that when $U$ has a $GL_m(\mathbb{C})$-module structure ($m$ may differ from $\ell$), then $S^{\lambda}(U)$ becomes naturally a (typically not irreducible) $GL_m(\mathbb{C})$-module.  

$\mathcal{F}(n,m)$ and $\mathbb{C}[V^m]$ are polynomial $GL_m(\mathbb{C})$-modules. 
An element $v$ of a polynomial $GL_m(\mathbb{C})$-module generates a submodule 
isomorphic to $S^{\lambda}(\mathbb{C}^m)$ if 
\begin{enumerate} 
\item it is fixed by $UT_m(\mathbb{C})$, the subgroup of upper triangular unipotent matrices; 
\item for a diagonal element 
$\mathrm{diag}(z_1,\dots,z_m)\in GL_m(\mathbb{C})$ we have 
$\mathrm{diag}(z_1,\dots,z_m)\cdot v=z_1^{\lambda_1}\cdots z_m^{\lambda_m}v$.  
\end{enumerate} 
Such an element is called a \emph{highest weight vector}. An irreducible polynomial $GL_m(\mathbb{C})$-module contains a unique (up to non-zero scalar multiples) highest weight vector. 

The action of $GL_m(\mathbb{C})$ on $\mathcal{F}(n,m)$ and on $\mathbb{C}[V^m]$ induces a representation of its Lie algebra $\mathfrak{gl}_m(\mathbb{C})$ on 
$\mathcal{F}(n,m)$ and on  $\mathbb{C}[V^m]$, such that $\varphi(n,m)$ is a homomorphism of 
 $\mathfrak{gl}_m(\mathbb{C})$-modules. 
In particular, $\ker(\varphi(n,m))$ is preserved by  $\mathfrak{gl}_m(\mathbb{C})$. 
To detect explicit highest weight vectors it is convenient to pass to the Lie algebra action. 
Therefore 
for later use we shall record the formulae determining this 
$\mathfrak{gl}_m(\mathbb{C})$-representation on $\mathcal{F}(n,m)$ in the case $m=3$.  
Denote by $E_{i,j}$ the matrix unit having entry $1$ in the $(i,j)$ position and the entry $0$ in all other positions. The Lie algebra $\mathfrak{gl}_m(\mathbb{C})$ of $GL_m(\mathbb{C})$ has basis $\{E_{i,j}\mid 1\le i,j\le m\}$. We have 
\begin{align*} E_{1,2}.\pi_{i,j,k}=j\pi_{i+1,j-1,k}, \quad E_{2,3}.\pi_{i,j,k}=k\pi_{i,j+1,k-1}, \quad E_{ss}.\pi_{\alpha_1,\alpha_2,\alpha_3}=\alpha_s \pi_{\alpha}
 \\ E_{2,1}.\pi_{i,j,k}=i\pi_{i-1,j+1,k},\qquad E_{3,2}.\pi_{i,j,k}=j\pi_{i,j-1,k+1} 
 \\ E_{1,2}.\rho_{i,j,k}=j\rho_{i+1,j-1,k}, \quad E_{2,3}.\rho_{i,j,k}=k\rho_{i,j+1,k-1}, \quad E_{ss}.\rho_{\alpha_1,\alpha_2,\alpha_3}=\alpha_s \rho_{\alpha}
 \\ E_{2,1}.\rho_{i,j,k}=i\rho_{i-1,j+1,k},\qquad E_{3,2}.\rho_{i,j,k}=j\rho_{i,j-1,k+1} 
 \end{align*}  
where in the above formulae $\pi_{a,b,c}$ or $\rho_{a,b,c}$ is interpreted as zero unless $a,b,c$ are all non-negative.

\section{Reduction to the case $m=3$} \label{sec:reduction to m=3}

The algebra $\mathbb{C}[V^m]^{D_{2n}}$ is a graded subalgebra of $\mathbb{C}[V^m]$, where the latter is  endowed with the standard grading. We introduce a grading on $\mathcal{F}(n,m)$ by making the homomorphism $\varphi(n,m)$ degree preserving; that is, the degree of 
the variables $\rho_\alpha$ is $2$, and the degree of the variables $\pi_\beta$ is $n$. 
We shall denote by $\mathcal{F}(n,m)_d$ and by  $\ker(\varphi(n,m))_d$ the degree $d$ homogeneous component of $\mathcal{F}(n,m)$ and its graded subspace $\ker(\varphi(n,m))$, and we write $\mathcal{F}(n,m)_{\le d}$ and $\ker(\varphi(n,m))_{\le d}$ 
for the sum of the homogeneous components of degree at most $d$ in $\mathcal{F}(n,m)$ and $\ker(\varphi(n,m))$.  

\begin{proposition}\label{prop:2n+2} 
The ideal $\ker(\varphi(n,m))$ is generated by its elements of degree at most $2n+2$. 
\end{proposition}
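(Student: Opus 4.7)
The plan is to prove the bound by a degree induction: assuming every homogeneous element of $\ker(\varphi(n,m))$ of degree strictly less than $d$ lies in the ideal generated by $\ker(\varphi(n,m))_{\le 2n+2}$, I aim to establish the same for degree $d$ whenever $d>2n+2$. The heart of the argument is a straightening procedure that, modulo a fixed family of low-degree relations, rewrites every monomial $\rho^a\pi^b$ of degree $d$ in a canonical normal form. Once this is in place, injectivity of the normal-form representatives into $R(m)_d$ forces any kernel element of degree $>2n+2$ to lie in the ideal generated by the low-degree relations.

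First I would identify the principal families of relations in degree $\le 2n+2$. The identity
\[p_\beta p_\gamma=(x^{\beta+\gamma}+y^{\beta+\gamma})+(x^\beta y^\gamma+x^\gamma y^\beta)\]
shows that $p_\beta p_\gamma-p_{\beta'}p_{\gamma'}$ lies in $\mathbb{C}[q_\alpha]$ whenever $\beta+\gamma=\beta'+\gamma'$ and $|\beta|=|\gamma|=|\beta'|=|\gamma'|=n$, yielding a family of degree-$2n$ elements $\pi_\beta\pi_\gamma-\pi_{\beta'}\pi_{\gamma'}-\Phi_{\beta,\gamma,\beta',\gamma'}(\rho)$ of $\ker(\varphi(n,m))$. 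Likewise, the symmetric matrix $\bigl(q_{e_i+e_j}\bigr)=\tfrac12(XY^T+YX^T)$ has rank at most $2$, so $\ker(\varphi(n,m))\cap\mathbb{C}[\rho_\alpha]$ is generated by the $3\times 3$ minors of the symmetric matrix $(\rho_{e_i+e_j})$; these relations sit in degree $6\le 2n+2$ since $n\ge 3$. Mixed relations of intermediate degree---arising from alternate ways to express monomials of the form $q^k p_\beta$ as polynomials in the polarizations of $p$ and $q$---contribute further rewriting rules of degrees up to $2n+2$.

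Next I would use these relations to straighten. The exchange relations let me sort any product $\pi_{\beta^{(1)}}\cdots\pi_{\beta^{(k)}}$ with $k\ge 2$ into a canonical ordering (say, lexicographic on the polarization indices), absorbing the correction terms into $\rho$-coefficients; the minor relations analogously put the $\rho$-part into a canonical shape; the mixed relations handle the remaining overlaps. Letting $J$ be the ideal of $\mathcal{F}(n,m)$ generated by all relations of degree $\le 2n+2$, this produces, in each degree $d$, a $\mathbb{C}$-spanning set of $\mathcal{F}(n,m)_d/J_d$ by standard monomials. It then suffices to verify that these standard monomials map, under $\varphi(n,m)$, to $\mathbb{C}$-linearly independent elements of $R(m)_d$: any homogeneous $f\in\ker(\varphi(n,m))_d$ with $d>2n+2$ must then already lie in $J$, closing the induction.

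The main obstacle is the linear independence assertion for the standard monomials in degrees $d>2n+2$, or equivalently the dimension match $\dim(\mathcal{F}(n,m)/J)_d=\dim R(m)_d$. In practice one attacks this via a Hilbert-series computation on the $GL_m(\mathbb{C})$-isotypic components, combined with the explicit description of $R(m)$ as the $D_{2n}$-invariants of $\mathbb{C}[V^m]$. Ensuring that no relation of degree greater than $2n+2$ is needed amounts to verifying that the straightening rules above generate every Plücker-type syzygy that could otherwise first appear in a higher degree.
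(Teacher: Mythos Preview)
Your proposal attacks the proposition by attempting to construct an explicit straightening calculus in degrees $\le 2n+2$ and then to verify, degree by degree, that the resulting standard monomials span $\mathcal{F}(n,m)_d/J_d$ and inject into $R(m)_d$. You yourself flag the central gap: the dimension match $\dim(\mathcal{F}(n,m)/J)_d=\dim R(m)_d$ for all $d>2n+2$ is asserted but not proved. Saying that ``in practice one attacks this via a Hilbert-series computation on the $GL_m(\mathbb{C})$-isotypic components'' is not a proof; for general $n$ and $m$ this is precisely the hard part, and indeed it is essentially equivalent to proving the full Second Fundamental Theorem (i.e.\ that your list of relations generates $\ker(\varphi(n,m))$), which is strictly stronger than the degree bound the proposition asks for. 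Your final sentence---that one must verify the straightening rules generate every higher Pl\"ucker-type syzygy---is a restatement of the claim, not an argument for it. In addition, the ``mixed relations'' and the normal form for products $\pi_{\beta^{(1)}}\cdots\pi_{\beta^{(k)}}$ with $k\ge 2$ are not made precise enough to carry a straightening argument for arbitrary $n,m$.

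The paper's proof is of an entirely different character and avoids all of this. It invokes Derksen's general degree bound for syzygies of invariants: $\ker(\varphi(n,m))$ is generated in degrees $\le 2\tau_{D_{2n}}(V^m)$, where $\tau_G(V^m)$ is the top degree of the coinvariant algebra $\mathbb{C}[V^m]/(\mathbb{C}[V^m]^G_+)$. One then bounds $\tau_{D_{2n}}(V^m)$ by the Noether number of $D_{2n}$, which equals $n+1$ by Schmid, giving the bound $2(n+1)=2n+2$ immediately. This is a three-line argument using off-the-shelf results, whereas your route would, if completed, amount to proving a much stronger theorem than the one stated.
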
 

\begin{proof}
By a general result of Derksen \cite[Theorem 2]{derksen} the ideal $\ker(\varphi(n,m))$ is generated by its elements of degree at most $2\tau_{D_{2n}}(V^m)$, where $\tau_{D_{2n}}(V^m)$ denotes the minimal positive integer $k$ such that all homogeneous polynomials in $\mathbb{C}[V^m]$ of degree $k$ are contained in the ideal 
$\mathbb{C}[V^m]^{D_{2n}}_+\mathbb{C}[V^m]$ of $\mathbb{C}[V^m]$ generated by the homogeneous $D_{2n}$-invariants of positive degree (called the Hilbert ideal). 
The number $\tau_{D_{2n}}(V^m)$ is bounded by the Noether number of $D_{2n}$  by \cite[Lemma 1.6]{cziszter-domokos_lowerbound}. 
The Noether number of $D_{2n}$ is $n+1$ by \cite{schmid}  (see also  \cite[Corollary 5.6]{cziszter-domokos_indextwo} for a stronger statement). This implies 
$\tau_{D_{2n}}(V^m)\le n+1$. 
Note finally that  obviously $\tau_{D_{2n}}(V^m)\ge\tau_{D_{2n}}(V)$, and the latter 
number is known to be $n+1$: 
indeed, $\mathbb{C}[V]^{D_{2n}}$ is generated by algebraically independent homogeneous 
invariants of degree $2$ and $n$, and thus  the Hilbert series of the corresponding coinvariant algebra 
$\mathbb{C}[V^m]/\mathbb{C}[V^m]^{D_{2n}}_+\mathbb{C}[V^m]$ equals 
$(1+t)(1+t+\cdots+t^{n-1})$ (see \cite{chevalley}). 
\end{proof} 
 
Note that $\mathcal{F}(n,m)$ has the tensor product decomposition 
$\mathcal{F}(n,m)=\mathcal{D}(m)\otimes \mathcal{E}(n,m)$
where 
\[\mathcal{D}(m)=S(\langle q\rangle_{GL_m(\mathbb{C})})=\mathbb{C}[\rho_{\alpha}\mid \alpha\in \mathbb{N}_0^m,\ \sum\alpha_i=2]\] 
and 
\[\mathcal{E}(n,m)=S(\langle p \rangle_{GL_m(\mathbb{C})})=\mathbb{C}[\pi_{\beta}\mid \beta\in \mathbb{N}_0^m,\ \sum\beta_j=n]
\] 
 For $m\ge 3$ set 
 \[\mathcal{R}_{2,2,2}:=\det \left(\begin{array}{ccc}\rho_{2,0,0} & \rho_{1,1,0} & \rho_{1,0,1} \\ \rho_{1,1,0} & \rho_{0,2,0} & \rho_{0,1,1} \\ \rho_{1,0,1} & \rho_{0,1,1} & \rho_{0,0,2}\end{array}\right)\in \mathcal{D}(m)\subset \mathcal{F}(n,m).\] 
 
 \begin{lemma}\label{lemma:R_2,2,2}
For $m\ge 3$  the element $\mathcal{R}_{2,2,2}$ 
belongs to $\ker(\varphi(n,m))$, 
and $\langle\mathcal{R}_{2,2,2}\rangle_{GL_m(\mathbb{C})}\cong S^{(2,2,2)}(\mathbb{C}^m)$ as $GL_m(\mathbb{C})$-modules. 
\end{lemma}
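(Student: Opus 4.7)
I would prove the two assertions separately.

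For $\mathcal{R}_{2,2,2}\in\ker(\varphi(n,m))$, apply the ring homomorphism $\varphi(n,m)$ to the defining determinant. Since $\rho_{e_i+e_j}\mapsto q_{e_i+e_j}=\tfrac12(x_iy_j+x_jy_i)$,
\[\varphi(n,m)(\mathcal{R}_{2,2,2})=\det\bigl(\tfrac12(x_iy_j+x_jy_i)\bigr)_{i,j=1,2,3}=\det\bigl(\tfrac12(uv^T+vu^T)\bigr)=0,\]
where $u=(x_1,x_2,x_3)^T$ and $v=(y_1,y_2,y_3)^T$; the matrix $uv^T+vu^T$ is a sum of two rank-one matrices, hence has rank at most $2$, so its $3\times 3$ determinant vanishes.

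For the isomorphism $\langle\mathcal{R}_{2,2,2}\rangle_{GL_m(\mathbb{C})}\cong S^{(2,2,2)}(\mathbb{C}^m)$, I use the standard criterion recalled in Section~\ref{sec:Lie action}: a nonzero highest weight vector of weight $\lambda$ in a polynomial $GL_m(\mathbb{C})$-module generates an irreducible submodule isomorphic to $S^{\lambda}(\mathbb{C}^m)$. Thus it suffices to show that $\mathcal{R}_{2,2,2}$ is a nonzero highest weight vector of weight $(2,2,2,0,\ldots,0)$. Non-vanishing is clear because the monomial $\rho_{2,0,0}\rho_{0,2,0}\rho_{0,0,2}$ occurs in the expansion with coefficient $1$ and cannot cancel against any other term. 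The weight assertion is also immediate: every monomial in the expansion is a product of three factors $\rho_{\alpha^{(k)}}$ whose multi-indices sum to $(2,2,2,0,\ldots,0)$, so $\mathrm{diag}(z_1,\ldots,z_m)$ scales $\mathcal{R}_{2,2,2}$ by $z_1^2z_2^2z_3^2$.

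For annihilation by $\mathfrak{n}^+$ it is enough to check $E_{i,i+1}.\mathcal{R}_{2,2,2}=0$ for $i=1,\ldots,m-1$. When $i\ge 3$ this is automatic: every $\rho_\alpha$ appearing has $\alpha_{i+1}=0$, and the $m$-variable analogue of the formulae of Section~\ref{sec:Lie action} then gives $E_{i,i+1}.\rho_\alpha=0$. When $i=1$ or $i=2$, the verification is a short derivation calculation on the five distinct monomials in the expansion of $\det(\rho_{e_i+e_j})_{i,j=1}^3$, and the resulting terms cancel in pairs. This derivation check is the only step that is not a one-line observation. Conceptually, the cancellation reflects the fact that the $\mathfrak{gl}_m(\mathbb{C})$-action is the infinitesimal form of the congruence action $R\mapsto g^TRg$ on the universal symmetric matrix $R=(\rho_{e_i+e_j})_{i,j=1}^m$, under which any $3\times 3$ principal minor transforms by $(\det g)^2$, which equals $1$ for $g$ unipotent.
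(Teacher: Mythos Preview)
Your proposal is correct and follows essentially the same approach as the paper's own proof: establish $\varphi(n,m)(\mathcal{R}_{2,2,2})=0$ via the rank-$\le 2$ factorization of the symmetric matrix $(q_{e_i+e_j})_{i,j}$, and then identify $\mathcal{R}_{2,2,2}$ as a highest weight vector of weight $(2,2,2)$ by checking annihilation under the positive root vectors $E_{i,i+1}$. The paper writes the factorization as a $3\times2$--$2\times2$--$2\times3$ matrix product, while you write it as $\tfrac12(uv^T+vu^T)$; these are the same observation. Your additional remark that the congruence action $R\mapsto g^TRg$ (with $g$ upper triangular unipotent) sends the leading $3\times3$ block to $A^TR_{11}A$ with $\det A=1$ is a valid conceptual shortcut that could replace the explicit derivation check, and is a small enrichment over the paper's bare ``a straightforward calculation yields\ldots''.
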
 

\begin{proof} The matrix 
 \[\left(\begin{array}{ccc}q_{2,0,0} & q_{1,1,0} & q_{1,0,1} \\ q_{1,1,0} & q_{0,2,0} & q_{0,1,1} \\q_{1,0,1} & q_{0,1,1} & q_{0,0,2}\end{array}\right)=
 \left(\begin{array}{cc} x_1 & y_1 \\x_2 & y_2 \\x_3 & y_3\end{array}\right)\cdot 
 \left(\begin{array}{cc} 0 & 1/2 \\ 1/2 & 0 \end{array}\right)\cdot 
\left(\begin{array}{ccc} x_1 & x_2 & x_3 \\ y_1 & y_2 & y_3\end{array}\right)\] 
has rank $2$, implying that its determinant is zero. On the other hand, this determinant is 
$\varphi(n,m)(\mathcal{R}_{2,2,2})$.  Thus $\mathcal{R}_{2,2,2}\in \ker(\varphi(n,m))$. 
A straightforward calculation yields $E_{1,2}.\mathcal{R}_{2,2,2}=0$, 
$E_{2,3}.\mathcal{R}_{2,2,2}=0$, and $E_{i,i}.\mathcal{R}_{2,2,2}=2\mathcal{R}_{2,2,2}$ 
($i=1,2,3)$, hence $\mathcal{R}_{2,2,2}$ is a highest weigh vector in $\mathcal{F}(n,m)$ with weight $(2,2,2)$. 
\end{proof} 

\begin{remark}
In fact it is known that $\mathcal{R}_{2,2,2}$ generates as a $GL$-ideal the kernel of the restriction of $\varphi(n,m)$ to $\mathcal{D}(m)$ (see the Second Fundamental Theorem for the orthogonal group in \cite[Theorem 2.17.A]{weyl}). 
\end{remark} 

Setting $\bar{\mathcal{D}}(m):=\mathcal{D}(m)/\langle \mathcal{R}_{2,2,2}\rangle_{GL_m(\mathbb{C})}\mathcal{D}(m)$, by 
Lemma~\ref{lemma:R_2,2,2} we conclude that $\varphi(n,m)$ factors through 
the natural surjection $\mathcal{F}(n,m)\to \bar{\mathcal{D}}(m)\otimes \mathcal{E}(n,m)$, so 
we get the graded $GL_m(\mathbb{C})$-module algebra surjection  
\[\bar\varphi(n,m): \bar{\mathcal{D}}(m)\otimes \mathcal{E}(n,m)
\to \mathbb{C}[V^m]^{D_{2n}}.\] 

\begin{lemma}\label{lemma:obvious} 
For $m\ge 3$ the $GL$-ideal $\ker(\varphi(n,m))$ is generated by $\mathcal{R}_{2,2,2}$ and 
any subset of $\ker(\varphi(n,m))$ whose image under the natural surjection $\mathcal{F}(n,m)\to \bar{\mathcal{D}}(m)\otimes \mathcal{E}(n,m)$ generates $\ker(\bar\varphi(n,m))$ as a $GL$-ideal.  
\end{lemma}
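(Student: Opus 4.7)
The plan is to combine the short exact sequence
\[0\to \ker(\pi)\to \mathcal{F}(n,m)\xrightarrow{\pi}\bar{\mathcal{D}}(m)\otimes\mathcal{E}(n,m)\to 0\]
defining the quotient with the elementary observation that, since $\mathcal{F}(n,m)$ is a $GL_m(\mathbb{C})$-module algebra, for any subset $T$ the $GL$-ideal it generates coincides with the ordinary ideal $\mathcal{F}(n,m)\cdot\langle T\rangle_{GL_m(\mathbb{C})}$. First I would identify $\ker(\pi)$: because $\mathcal{F}(n,m)=\mathcal{D}(m)\otimes\mathcal{E}(n,m)$ and $\bar{\mathcal{D}}(m)=\mathcal{D}(m)/\langle\mathcal{R}_{2,2,2}\rangle_{GL_m(\mathbb{C})}\mathcal{D}(m)$, this kernel is precisely the $GL$-ideal $\langle\mathcal{R}_{2,2,2}\rangle_{GL_m(\mathbb{C})}\mathcal{F}(n,m)$ of $\mathcal{F}(n,m)$ generated by $\mathcal{R}_{2,2,2}$. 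Since $\bar\varphi(n,m)\circ\pi=\varphi(n,m)$ by construction, $\pi$ restricts to a surjection $\ker(\varphi(n,m))\twoheadrightarrow\ker(\bar\varphi(n,m))$ whose kernel is $\ker(\pi)$.

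Next, let $S\subseteq\ker(\varphi(n,m))$ be a subset whose image $\pi(S)$ generates $\ker(\bar\varphi(n,m))$ as a $GL$-ideal, and denote by $I\subseteq\mathcal{F}(n,m)$ the $GL$-ideal generated by $\{\mathcal{R}_{2,2,2}\}\cup S$. The inclusion $I\subseteq\ker(\varphi(n,m))$ is immediate from Lemma~\ref{lemma:R_2,2,2} together with $S\subseteq\ker(\varphi(n,m))$. For the reverse inclusion I would take an arbitrary $f\in\ker(\varphi(n,m))$; the hypothesis on $\pi(S)$ then yields an expression
\[\pi(f)=\sum_i \bar a_i\,\pi(g_i),\qquad \bar a_i\in\bar{\mathcal{D}}(m)\otimes\mathcal{E}(n,m),\ g_i\in\langle S\rangle_{GL_m(\mathbb{C})},\]
and, picking any lifts $a_i\in\mathcal{F}(n,m)$ of the $\bar a_i$, the difference $f-\sum_i a_ig_i$ lies in $\ker(\pi)\subseteq I$, whereas each summand $a_ig_i$ is visibly in $I$, which forces $f\in I$.

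The whole argument is essentially a direct application of a third-isomorphism-theorem type statement in the category of $GL_m(\mathbb{C})$-module algebras, so no step presents a real obstacle. The only subtlety worth flagging is the meaning of ``generates as a $GL$-ideal'': one first forms the $GL_m(\mathbb{C})$-submodule $\langle S\rangle_{GL_m(\mathbb{C})}$ and only then the ambient ideal it generates, which is precisely the property of $\pi(S)$ that makes the decomposition of $\pi(f)$ available.
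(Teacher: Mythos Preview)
Your proposal is correct and is precisely the unpacking of what the paper means by ``an immediate consequence of the construction of $\bar\varphi(n,m)$'': the paper gives no further detail beyond that one sentence. Your identification of $\ker(\pi)$ with the $GL$-ideal generated by $\mathcal{R}_{2,2,2}$ and the third-isomorphism-theorem argument are exactly the content that the word ``immediate'' is hiding.
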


\begin{proof}
This is an immediate consequence of the construction of $\bar\varphi(n,m)$. 
\end{proof} 

Our next aim is to compute the $GL_m(\mathbb{C})$-module structure of the homogeneous components of $\bar{\mathcal{D}}(m)\otimes \mathcal{E}(n,m)$ up to  degree $2n+2$. 

\begin{proposition}\label{prop:well known GL-module isomorphisms}
We have the following isomorphisms of $GL_m(\mathbb{C})$-modules: 
\begin{itemize}
\item[(i)] $\mathcal{D}(m)\cong \sum_{d=0}^{\infty}\sum_{\lambda\in\mathrm{Par}_m(d)}S^{2\lambda}(\mathbb{C}^m)$
where $2\lambda$ stands for the partition $(2\lambda_1,\dots,2\lambda_m)$ of $2d$. 
\item[(ii)] 
$\bar{\mathcal{D}}(m)\cong  \sum_{d=0}^{\infty}\sum_{\lambda\in\mathrm{Par}_m(d),\ \mathrm{ht}(\lambda)\le 2}S^{2\lambda}(\mathbb{C}^m)$. 
\item[(iii)] $\mathcal{E}(n,m)\cong S(S^n(\mathbb{C}^m))
=\bigoplus_{d=0}^\infty S^d(S^n(\mathbb{C}^m))$ (the symmetric tensor algebra of 
$S^n(\mathbb{C}^m)$. 
\item[(iv)] $S^2(S^n(\mathbb{C}^m))\cong 
\sum_{j=0}^{\lfloor \frac n2\rfloor} S^{(2n-2j,2j)}(\mathbb{C}^m)$.  
\end{itemize}
\end{proposition}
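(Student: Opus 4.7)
The plan is to assemble the four identities from classical $GL_m(\mathbb{C})$-module decompositions; as the title ``well known'' suggests, each is standard, and the work is mostly assembly and citation rather than fresh derivation. I would address them in the order (iii), (i), (iv), (ii), since (ii) uses (i).

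Part (iii) is essentially a definition: $\mathcal{E}(n,m) = S(\langle p\rangle_{GL_m(\mathbb{C})})$ by construction, and the isomorphism $\langle p\rangle_{GL_m(\mathbb{C})}\cong S^n(\mathbb{C}^m)$ has been recorded in Section~\ref{sec:fft for dihedral}. Part (i) identifies $\mathcal{D}(m)$ with $S(S^2(\mathbb{C}^m))$, the coordinate ring of symmetric $m \times m$ matrices (via $\rho_\alpha \leftrightarrow$ entries of a generic symmetric matrix); its decomposition $\bigoplus_\lambda S^{2\lambda}(\mathbb{C}^m)$ is classical, derivable from the Cauchy identity applied to $S(\mathrm{Sym}^2)$ and stated explicitly in Procesi's book. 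For (iv) I would compute characters: the character of $S^2(S^n(\mathbb{C}^m))$ evaluated at $x=(x_1,\ldots,x_m)$ equals
\[\tfrac12\bigl(h_n(x)^2+h_n(x_1^2,\ldots,x_m^2)\bigr).\]
Pieri's rule gives $h_n(x)^2=\sum_{k=0}^{n}s_{(2n-k,k)}(x)$, while a standard symmetric-function identity expresses $h_n(x_1^2,\ldots,x_m^2)$ as the alternating sum $\sum_j s_{(2n-2j,2j)}(x)-\sum_j s_{(2n-2j-1,2j+1)}(x)$. Averaging cancels the odd-$k$ Schur terms and leaves precisely $\sum_{j=0}^{\lfloor n/2\rfloor} s_{(2n-2j,2j)}(x)$, which is the stated decomposition.

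Part (ii) I would deduce from (i) together with the remark following Lemma~\ref{lemma:R_2,2,2}: by Weyl's Second Fundamental Theorem for $O_2(\mathbb{C})$, the kernel of the restriction $\varphi(n,m)|_{\mathcal{D}(m)}$ is the $GL$-ideal generated by $\mathcal{R}_{2,2,2}$, so $\bar{\mathcal{D}}(m)\cong \mathbb{C}[q_\alpha]$, the coordinate ring of the variety of symmetric $m\times m$ matrices of rank at most $2$. The $GL_m(\mathbb{C})$-module structure of this classical determinantal quotient -- retaining exactly $S^{2\lambda}(\mathbb{C}^m)$ for partitions $\lambda$ with $\mathrm{ht}(\lambda)\le 2$ -- is a standard result that can be cited (e.g.\ the work of De~Concini--Procesi on determinantal rings for symmetric matrices, or equivalently via $GL_m\times O_2$ Howe duality applied to $\mathbb{C}[V^m]^{O_2(\mathbb{C})}$). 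I expect (ii) to be the only step where a reader may want to track down the precise invariant-theoretic reference, but no genuinely new calculation is required in any of the four parts.
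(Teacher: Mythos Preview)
Your proposal is correct and matches the paper's proof almost exactly: parts (i), (ii), (iii) are handled identically (identify $\mathcal{D}(m)$ with $S(S^2(\mathbb{C}^m))$ and cite Procesi; identify $\bar{\mathcal{D}}(m)$ with the coordinate ring of rank-$\le 2$ symmetric matrices and cite the standard decomposition; use $\langle p\rangle_{GL_m(\mathbb{C})}\cong S^n(\mathbb{C}^m)$). The only difference is in (iv): the paper simply cites Macdonald \cite[Example I.8.9]{macdonald} and points forward to Remark~\ref{remark:elliott}, whereas you supply the underlying character computation via $\tfrac12(h_n(x)^2+h_n(x^2))$ and the identity $h_n(x^2)=\sum_{k=0}^n(-1)^k s_{(2n-k,k)}(x)$ --- this is just an explicit unpacking of the cited result, not a different argument.
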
 

\begin{proof} (i) We have $\langle q\rangle_{GL_m(\mathbb{C})}\cong S^2(\mathbb{C}^m)$ and therefore (i) follows from the well-known decomposition 
\[S^d(S^2(\mathbb{C}^m))\cong \sum_{\lambda\in\mathrm{Par}_m(d)} S^{2\lambda}(\mathbb{C}^m)\] 
(see for example \cite[Section 11.4.5, Theorem]{procesi}). 

(ii) $\bar{\mathcal{D}}(m)$ can be identified with the coordinate ring of the variety of 
$m\times m$ symmetric matrices of rank at most $2$ endowed with the natural $GL_m(\mathbb{C})$-action. For the well-known decomposition of this $GL_m(\mathbb{C})$-module see for example \cite[Section 11.5.1, Second Fundamental Theorem]{procesi}. 

(iii) follows from the isomorphism 
$\langle p\rangle_{GL_m(\mathbb{C})}\cong S^n(\mathbb{C}^m)$. 

(iv) can be found for example in \cite[Example I.8.9]{macdonald} 
(see also Remark~\ref{remark:elliott} and the proof of Theorem~\ref{thm:m=2} (i)). 
\end{proof} 

Write $\bar{\mathcal{D}}(m)_d$, $\mathcal{E}(n,m)_d$ for the degree $d$ 
homogeneous component of  $\bar{\mathcal{D}}(m)$, $\mathcal{E}(n,m)$, and write 
$(\bar{\mathcal{D}}(m)\otimes \mathcal{E}(n,m))_{\le d}$ for the sum of the homogeneous components of $\bar{\mathcal{D}}(m)\otimes \mathcal{E}(n,m)$ of degree at most $d$. 

\begin{corollary} \label{cor:GL-structure of F} 
As a $GL_m(\mathbb{C})$-module, 
$(\bar{\mathcal{D}}(m)\otimes \mathcal{E}(n,m))_{\le 2n+2}$ is isomorphic to 
\begin{align*} 
\sum_{d=0}^{n+1}\sum_{\lambda\in\mathrm{Par}_m(d),\ \mathrm{ht}( \lambda)\le 2} S^{2\lambda}(\mathbb{C}^m)
+\sum_{d=0}^{\lfloor \frac{n+2}{2}\rfloor}\sum_{\lambda\in\mathrm{Par}_m(d),\ \mathrm{ht}(\lambda)\le 2}
S^n(\mathbb{C}^m)\otimes S^{2\lambda}(\mathbb{C}^m)
\\ +\sum_{j=0}^{\lfloor\frac n2\rfloor}(S^{(2n-2j,2j)}(\mathbb{C}^m)+S^{(2n-2j,2j)}(\mathbb{C}^m)\otimes S^2(\mathbb{C}^m)). 
\end{align*} 
\end{corollary}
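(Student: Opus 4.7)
The plan is to decompose $(\bar{\mathcal{D}}(m)\otimes\mathcal{E}(n,m))_{\le 2n+2}$ into bihomogeneous pieces with respect to the $\bar{\mathcal{D}}(m)$-degree $2a$ and the $\mathcal{E}(n,m)$-degree $nb$, identify each piece using Proposition~\ref{prop:well known GL-module isomorphisms}, and collect the result. Since $\rho_\alpha$ has degree $2$ and $\pi_\beta$ has degree $n$, we have the tautological identity
\[
(\bar{\mathcal{D}}(m)\otimes\mathcal{E}(n,m))_{\le 2n+2}
=\bigoplus_{\substack{a,b\ge 0\\ 2a+nb\le 2n+2}}\bar{\mathcal{D}}(m)_{2a}\otimes \mathcal{E}(n,m)_{nb}.
\]
Since $n\ge 3$, the inequality $nb\le 2n+2$ forces $b\in\{0,1,2\}$, so only three ranges of $a$ arise: for $b=0$ we get $a\le n+1$, for $b=1$ we get $a\le \lfloor(n+2)/2\rfloor$, and for $b=2$ we get $a\le 1$.

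First I would rewrite Proposition~\ref{prop:well known GL-module isomorphisms}(ii) in the grading normalization of the present section: the summand $S^{2\lambda}(\mathbb{C}^m)$ attached to $\lambda\in\mathrm{Par}_m(a)$ sits in $\bar{\mathcal{D}}(m)_{2a}$, since it is a constituent of $S^a(S^2(\mathbb{C}^m))$. Similarly, part (iii) places $S^b(S^n(\mathbb{C}^m))$ in $\mathcal{E}(n,m)_{nb}$; in particular $\mathcal{E}(n,m)_0=\mathbb{C}$, $\mathcal{E}(n,m)_n\cong S^n(\mathbb{C}^m)$, and by part (iv), $\mathcal{E}(n,m)_{2n}\cong \sum_{j=0}^{\lfloor n/2\rfloor}S^{(2n-2j,2j)}(\mathbb{C}^m)$.

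Now I would substitute these formulae into the three cases. The $b=0$ contribution is $\sum_{a=0}^{n+1}\bar{\mathcal{D}}(m)_{2a}$, which yields the first sum in the claimed expression. The $b=1$ contribution is $\sum_{a=0}^{\lfloor(n+2)/2\rfloor}\bar{\mathcal{D}}(m)_{2a}\otimes S^n(\mathbb{C}^m)$, matching the second sum. Finally, the $b=2$ contribution is $(\bar{\mathcal{D}}(m)_0\oplus\bar{\mathcal{D}}(m)_2)\otimes \mathcal{E}(n,m)_{2n}$; noting that $\bar{\mathcal{D}}(m)_0=\mathbb{C}$ and $\bar{\mathcal{D}}(m)_2\cong S^2(\mathbb{C}^m)$ (the only $\lambda\in\mathrm{Par}_m(1)$ being $(1)$, giving $S^{(2)}(\mathbb{C}^m)=S^2(\mathbb{C}^m)$), this becomes exactly the third line of the stated formula.

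There is essentially no obstacle in this argument—it is a bookkeeping consolidation of the decompositions supplied by Proposition~\ref{prop:well known GL-module isomorphisms}. The only point requiring minor care is matching conventions: keeping track of the fact that the grading on $\bar{\mathcal{D}}(m)$ doubles the partition degree (so a partition of $a$ contributes in degree $2a$), and verifying that for $n\ge 3$ no term with $b\ge 3$ enters the truncation at $2n+2$.
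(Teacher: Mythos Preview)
Your argument is correct and is essentially identical to the paper's own proof: both split $(\bar{\mathcal{D}}(m)\otimes\mathcal{E}(n,m))_{\le 2n+2}$ according to the $\mathcal{E}(n,m)$-degree $nb$ with $b\in\{0,1,2\}$ and then invoke Proposition~\ref{prop:well known GL-module isomorphisms} to identify each piece. Your write-up is in fact slightly more explicit about the degree bookkeeping, but there is no substantive difference in method.
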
 

\begin{proof} 
As $\mathcal{E}(n,m)_d$ is non-zero only if $d$ is a multiple of $n$, 
the sum of the homogeneous components of $\bar{\mathcal{D}}(m)\otimes \mathcal{E}(n,m)$ of degree at most $2n+2$ is 
\[\sum_{d=0}^{2n+2}\bar{\mathcal{D}}(m)_d\otimes \mathcal{E}(n,m)_0
+\sum_{d=0}^{n+2}\bar{\mathcal{D}}(m)_d\otimes \mathcal{E}(n,m)_n
+\sum_{d=0}^2\bar{\mathcal{D}}(m)_d\otimes \mathcal{E}(n,m)_{2n}.\] 
Now the statement follows from the $GL_m(\mathbb{C})$-module isomorphisms given in 
Proposition~\ref{prop:well known GL-module isomorphisms}. 
\end{proof}

Recall that for $l\le m$ we view $\mathcal{F}(n,l)$ as a subspace of $\mathcal{F}(n,m)$ in the obvious way. 
Similarly, $\mathcal{D}(l)$, $\bar{\mathcal{D}}(l)$, $\mathcal{E}(n,l)$, are viewed as subspaces of $\mathcal{D}(m)$, $\bar{\mathcal{D}}(m)$, $\mathcal{E}(n,m)$, and so 
$\bar{\mathcal{D}}(l)\otimes \mathcal{E}(n,l)$ is viewed as a subspace 
of $\bar{\mathcal{D}}(m)\otimes\mathcal{E}(n,m)$.

\begin{proposition}\label{prop:height 3} 
Let $U$ be a  $GL_m(\mathbb{C})$-invariant subspace of $(\bar{\mathcal{D}}(m)\otimes\mathcal{E}(n,m))_{\le 2n+2}$. 
Then 
\[U=\langle U\cap \bar{\mathcal{D}}(3)\otimes\mathcal{E}(n,3)\rangle_{GL_m(\mathbb{C})}.\] 
\end{proposition}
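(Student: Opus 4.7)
The plan is to reduce the proposition to two facts and then combine them via complete reducibility: (1) every irreducible $GL_m(\mathbb{C})$-submodule appearing in $(\bar{\mathcal{D}}(m)\otimes\mathcal{E}(n,m))_{\le 2n+2}$ is of the form $S^\mu(\mathbb{C}^m)$ with $\mathrm{ht}(\mu)\le 3$, and (2) a highest weight vector of $\bar{\mathcal{D}}(m)\otimes\mathcal{E}(n,m)$ whose weight is supported on the first three coordinates automatically lies in $\bar{\mathcal{D}}(3)\otimes\mathcal{E}(n,3)$. Granting both, polynomial $GL_m(\mathbb{C})$-modules are completely reducible, so I decompose $U$ into irreducible summands, note that each such summand is generated as a $GL_m(\mathbb{C})$-module by its (unique up to non-zero scalar) highest weight vector, and conclude that this vector already lies in $U\cap\bar{\mathcal{D}}(3)\otimes\mathcal{E}(n,3)$. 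The reverse containment is trivial since $U$ is $GL_m(\mathbb{C})$-stable.

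For (1) I would feed the decomposition provided by Corollary~\ref{cor:GL-structure of F} into Pieri's rule. The pure Schur summands there are $S^{2\lambda}(\mathbb{C}^m)$ with $\mathrm{ht}(2\lambda)\le 2$. The remaining summands are $S^n(\mathbb{C}^m)\otimes S^{2\lambda}(\mathbb{C}^m)$ with $\mathrm{ht}(\lambda)\le 2$ and $S^{(2n-2j,2j)}(\mathbb{C}^m)\otimes S^2(\mathbb{C}^m)$; in both cases a Schur module of height at most $2$ is tensored with a symmetric power. By Pieri's rule, the irreducible constituents of such a tensor product are indexed by Young diagrams obtained from a height-$2$ diagram by adjoining a horizontal strip, and adjoining a horizontal strip raises the height by at most $1$. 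Hence every irreducible constituent has height at most $3$, as required.

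For (2) it suffices to track weights of the diagonal torus. Each generator $\rho_\alpha$ (respectively $\pi_\beta$) of $\mathcal{F}(n,m)$ is a weight vector whose weight is the componentwise non-negative subscript $\alpha$ (respectively $\beta$), so every monomial in $\bar{\mathcal{D}}(m)\otimes\mathcal{E}(n,m)$ carries a weight in $\mathbb{N}_0^m$. If a highest weight vector $v\in U$ has weight $(\mu_1,\mu_2,\mu_3,0,\dots,0)$, then in each monomial occurring in $v$ every variable must have $\alpha_i=\beta_i=0$ for $i>3$, forcing the monomial into $\bar{\mathcal{D}}(3)\otimes\mathcal{E}(n,3)$. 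Summing over the finitely many irreducible summands of $U$ yields $U\subseteq\langle U\cap\bar{\mathcal{D}}(3)\otimes\mathcal{E}(n,3)\rangle_{GL_m(\mathbb{C})}$. The only step with any genuine content is the Pieri estimate in (1); the weight bookkeeping in (2) and the complete-reducibility assembly are formal.
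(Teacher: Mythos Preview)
Your proof is correct and follows essentially the same route as the paper's: reduce by complete reducibility to an irreducible $U$, use Corollary~\ref{cor:GL-structure of F} together with Pieri's rule to bound $\mathrm{ht}(\lambda)\le 3$, and then observe that the highest weight vector, having weight supported on the first three coordinates, lies in $\bar{\mathcal{D}}(3)\otimes\mathcal{E}(n,3)$. Your explicit justification of step~(2) via non-negativity of the monomial weights is a welcome addition, as the paper leaves that implication unexplained.
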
 

\begin{proof} By complete reducibility of $GL_m(\mathbb{C})$-modules, it is sufficient to prove the statement in the special case when $U$ is a \emph{minimal}  $GL_m(\mathbb{C})$-invariant subspace of $(\bar{\mathcal{D}}(m)\otimes\mathcal{E}(n,m))_{\le 2n+2}$. So  assume that $U$ is a minimal $GL_m(\mathbb{C})$-invariant subspace. 
Pieri's Formula (cf. \cite[I.5.16]{macdonald}) implies that if $S^{\nu}(\mathbb{C}^m)$ occurs as a summand in $S^{\lambda}(\mathbb{C}^m)\otimes S^{\mu}(\mathbb{C}^m)$ then 
$\mathrm{ht}(\nu)\le \mathrm{ht}(\lambda)+\mathrm{ht}(\mu)$. 
Thus by Corollary~\ref{cor:GL-structure of F} we conclude that  
$U\cong  S^{\lambda}(\mathbb{C}^m)$ for some 
partition $\lambda$ with $\mathrm{ht}(\lambda)\le 3$.  
It follows that $U=\langle w\rangle_{GL_m(\mathbb{C})}$ for some highest weight vector 
$w$ with weight $\lambda$. That is, $w$ is a non-zero element of $U$ fixed by the subgroup of unipotent upper triangular matrices in $GL_m(\mathbb{C})$, and for a diagonal element 
$\mathrm{diag}(z_1,\dots,z_m)\in GL_m(\mathbb{C})$ we have $\mathrm{diag}(z_1,\dots,z_m)\cdot w=
z_1^{\lambda_1}\cdots z_m^{\lambda_m}w$. In particular, $\mathrm{ht}(\lambda)\le 3$ implies that $w$ belongs to 
$\bar{\mathcal{D}}(3)\otimes\mathcal{E}(n,3)$. 
\end{proof} 

We arrived at the main result of this Section: 

\begin{theorem}\label{thm:m=3 sufficient} 
For $m\ge 3$ the ideal $\ker(\varphi(n,m))$ of $\mathcal{F}(n,m)$ 
is generated  by the 
$GL_m(\mathbb{C})$-submodule 
$\langle \ker(\varphi(n,3))_{\le 2n+2}\rangle_{GL_m(\mathbb{C})}$
of $\mathcal{F}(n,m)$. 
That is, $\ker(\varphi(n,m))$ is generated as a $GL$-ideal 
by elements of degree $\le 2n+2$ contained in the kernel of $\varphi(n,3)$. 
\end{theorem}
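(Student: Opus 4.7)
The plan is to assemble the theorem from the preceding ingredients by chaining Lemma~\ref{lemma:obvious}, Proposition~\ref{prop:2n+2}, and Proposition~\ref{prop:height 3}. The strategy is: first pass to the quotient $\bar{\mathcal{D}}(m)\otimes\mathcal{E}(n,m)$ to get rid of the known relations from the symmetric matrix variety; second, use the degree bound to land in a subspace to which Proposition~\ref{prop:height 3} applies; third, use that proposition to confine generators to the three-variable subalgebra; finally, lift back to $\mathcal{F}(n,3)$ and add $\mathcal{R}_{2,2,2}$.

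In detail, I would first observe that by Proposition~\ref{prop:2n+2} applied to $\varphi(n,m)$, together with the fact that $\bar\varphi(n,m)\circ\pi=\varphi(n,m)$ where $\pi:\mathcal{F}(n,m)\to\bar{\mathcal{D}}(m)\otimes\mathcal{E}(n,m)$ is the natural surjection, the $GL$-ideal $\ker(\bar\varphi(n,m))$ is generated by its homogeneous part of degree at most $2n+2$, namely by the $GL_m(\mathbb{C})$-submodule $U:=\ker(\bar\varphi(n,m))_{\le 2n+2}$. Now $U$ is a $GL_m(\mathbb{C})$-invariant subspace of $(\bar{\mathcal{D}}(m)\otimes\mathcal{E}(n,m))_{\le 2n+2}$, so Proposition~\ref{prop:height 3} yields
\[
U \;=\; \bigl\langle U\cap\bigl(\bar{\mathcal{D}}(3)\otimes\mathcal{E}(n,3)\bigr)\bigr\rangle_{GL_m(\mathbb{C})}.
\]
Since $\bar\varphi(n,m)$ restricts on $\bar{\mathcal{D}}(3)\otimes\mathcal{E}(n,3)$ to $\bar\varphi(n,3)$, the intersection on the right equals $\ker(\bar\varphi(n,3))_{\le 2n+2}$. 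Hence $\ker(\bar\varphi(n,m))$ is generated as a $GL$-ideal by $\ker(\bar\varphi(n,3))_{\le 2n+2}$.

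Next I would lift back to $\mathcal{F}(n,3)$. Because $\pi$ is degree-preserving and maps $\ker(\varphi(n,3))$ onto $\ker(\bar\varphi(n,3))$, every element of $\ker(\bar\varphi(n,3))_{\le 2n+2}$ is the image under $\pi$ of some element of $\ker(\varphi(n,3))_{\le 2n+2}$. Choose such lifts to obtain a subset $\Sigma\subset\ker(\varphi(n,3))_{\le 2n+2}$ whose image under $\pi$ generates $\ker(\bar\varphi(n,m))$ as a $GL$-ideal. Since $\mathcal{R}_{2,2,2}\in\mathcal{F}(n,3)$ has degree $6\le 2n+2$ (recall $n\ge 3$), Lemma~\ref{lemma:obvious} now implies that $\Sigma\cup\{\mathcal{R}_{2,2,2}\}\subset\ker(\varphi(n,3))_{\le 2n+2}$ generates $\ker(\varphi(n,m))$ as a $GL$-ideal, which is precisely the claim of the theorem.

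The only point that requires a moment of care is the identification $U\cap(\bar{\mathcal{D}}(3)\otimes\mathcal{E}(n,3))=\ker(\bar\varphi(n,3))_{\le 2n+2}$, but this is immediate from the fact that $\bar\varphi(n,3)$ is by construction the restriction of $\bar\varphi(n,m)$ to the subalgebra $\bar{\mathcal{D}}(3)\otimes\mathcal{E}(n,3)$ (cf. the convention at the end of Section~\ref{sec:fft for dihedral} identifying $\mathcal{F}(n,l)$ with a subalgebra of $\mathcal{F}(n,m)$ for $l\le m$). Accordingly, I do not expect a real obstacle here; the theorem is essentially the formal consequence of assembling Propositions~\ref{prop:2n+2} and~\ref{prop:height 3} with Lemma~\ref{lemma:obvious}, and all the substantive work has been done in proving those.
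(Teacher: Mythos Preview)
Your proposal is correct and follows essentially the same route as the paper's proof: pass to $\bar\varphi(n,m)$, apply the degree bound from Proposition~\ref{prop:2n+2} to land in degree $\le 2n+2$, invoke Proposition~\ref{prop:height 3} to confine generators to the three-variable piece $\bar{\mathcal{D}}(3)\otimes\mathcal{E}(n,3)$, then lift back via Lemma~\ref{lemma:obvious} using that $\mathcal{R}_{2,2,2}\in\ker(\varphi(n,3))_{\le 2n+2}$. Your explicit choice of lifts $\Sigma$ is a harmless refinement; the paper simply observes that the whole subspace $\ker(\varphi(n,3))_{\le 2n+2}$ works.
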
 

\begin{proof} 
The natural surjection $\mathcal{F}(n,m)\to \bar{\mathcal{D}}(m)\otimes \mathcal{E}(n,m)$ maps 
$\ker(\varphi(n,m))$ onto $\ker(\bar\varphi(n,m))$, therefore by Proposition~\ref{prop:2n+2} the latter is generated as an ideal by $\ker(\bar\varphi(n,m))_{\le 2n+2}$. 
Now $\ker(\bar\varphi(n,m))_{\le 2n+2}$ is a $GL_m(\mathbb{C})$-invariant subspace 
of $(\bar{\mathcal{D}}(m)\otimes\mathcal{E}(n,m))_{\le 2n+2}$, hence by Proposition~\ref{prop:height 3} we have 
\[\ker(\bar\varphi(n,m))_{\le 2n+2}=\langle \ker(\bar\varphi(n,m))_{\le 2n+2}\cap 
 \bar{\mathcal{D}}(3)\otimes\mathcal{E}(n,3)\rangle_{GL_m(\mathbb{C})}
 =\langle \ker(\bar\varphi(n,3))_{\le 2n+2}\rangle_{GL_m(\mathbb{C})}.\] 
So the image $\ker(\bar\varphi(n,3))_{\le 2n+2}$ of $\ker(\varphi(n,3))_{\le 2n+2}$ under the natural surjection 
$\mathcal{F}(n,m)\to \bar{\mathcal{D}}(m)\otimes \mathcal{E}(n,m)$ generates $\ker(\bar\varphi(n,m))$ as a $GL$-ideal. Since the element $\mathcal{R}_{2,2,2}$ belongs to  $\ker(\varphi(n,3))_{\le 2n+2}$,  
we can conclude by Lemma~\ref{lemma:obvious} 
that $\ker(\varphi(n,3))_{\le 2n+2}$ generates $\ker(\varphi(n,m))$ as a $GL$-ideal. 
Equivalently, $\ker(\varphi(n,m))$ is generated as an ideal of 
$\mathcal{F}(n,m)$ by its subspace 
$\langle \ker(\varphi(n,3))_{\le 2n+2}\rangle_{GL_m(\mathbb{C})}$. 
\end{proof}

\section{The case $m=2$} \label{sec:m=2} 

Set 
\begin{equation}\label{eq:R(n,2)}
\mathcal{R}(n)_{n,2}:=\pi_{n,0}\rho_{0,2}-2\pi_{n-1,1}\rho_{1,1}+\pi_{n-2,2}\rho_{2,0}.\end{equation} 

For $k=1,\dots,\lfloor \frac n2\rfloor$ set 
\begin{align}\label{eq:R(2n-2k,2k)} 
\mathcal{R}(n)_{2n-2k,2k}:=
(-1)^k\frac 12\binom{2k}{k}\pi_{n-k,k}^2+\sum_{j=0}^{k-1}(-1)^j\binom{2k}{j}\pi_{n-j,j}\pi_{n-2k+j,2k-j}
\\ \notag -4^k\rho_{2,0}^{n-2k}(\rho_{1,1}^2-\rho_{2,0}\rho_{0,2})^k.  
\end{align} 

\begin{theorem}\label{thm:m=2} 
For $\lambda\in\{(n,2),(2n-2,2),(2n-4,4),\dots,(2n-2\lfloor\frac n2\rfloor,
2\lfloor\frac n2\rfloor)\}$ consider the element $\mathcal{R}(n)_{\lambda}$  in $\mathcal{F}(n,2)$ introduced above. 
\begin{itemize} 
\item[(i)]For $m\ge2$ the element $\mathcal{R}(n)_{\lambda}$ generates a $GL_m(\mathbb{C})$-submodule in $\ker(\varphi(n,m))$ isomorphic to $S^{\lambda}(\mathbb{C}^m)$. 
\item[(ii)] For $m=2$ the ideal, $\ker(\varphi(n,2))$ is minimally generated by the above elements $\mathcal{R}(n)_{\lambda}$ as a $GL_2(\mathbb{C})$-ideal. 
\end{itemize}
\end{theorem}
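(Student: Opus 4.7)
Part (i) reduces to two checks for each $\lambda$. First, $\mathcal{R}(n)_\lambda \in \ker(\varphi(n,m))$: since $\mathcal{R}(n)_\lambda \in \mathcal{F}(n,2)\subset\mathcal{F}(n,m)$, it suffices to verify $\varphi(n,2)(\mathcal{R}(n)_\lambda)=0$. For $\lambda=(n,2)$ this is a direct expansion in which the four cross-terms cancel pairwise. For $\lambda=(2n-2k,2k)$, exploit the symmetry $T_i=T_{2k-i}$ with $T_i:=\pi_{n-i,i}\pi_{n-2k+i,2k-i}$, rewriting the $\pi$-part of $\mathcal{R}(n)_{2n-2k,2k}$ as $\tfrac12\sum_{i=0}^{2k}(-1)^i\binom{2k}{i}T_i$. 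Applying $\varphi(n,2)$, the pure $x_1^{2n-2k}x_2^{2k}$ and $y_1^{2n-2k}y_2^{2k}$ contributions vanish since $\sum(-1)^i\binom{2k}{i}=0$, and the mixed terms combine into $(x_1y_1)^{n-2k}(x_1y_2-x_2y_1)^{2k}$, matching the image of the $\rho$-term $-4^k\rho_{2,0}^{n-2k}(\rho_{1,1}^2-\rho_{2,0}\rho_{0,2})^k$ via $(x_1y_2-x_2y_1)^2=4(q_{1,1}^2-q_{2,0}q_{0,2})$. Second, the highest weight property: $E_{1,2}\cdot\mathcal{R}(n)_\lambda=0$ via the Lie algebra formulas of Section~\ref{sec:Lie action}, using the identity $\binom{2k}{i}(2k-i)=\binom{2k}{i+1}(i+1)=2k\binom{2k-1}{i}$ to see the two Leibniz contributions in the $\pi$-quadratic cancel, and the fact that $\rho_{2,0}$ and $\rho_{1,1}^2-\rho_{2,0}\rho_{0,2}$ are separately $E_{1,2}$-invariant. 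The diagonal weight is $\lambda$ by inspection. For $m>2$, $\mathcal{R}(n)_\lambda$ involves only variables with trivial entries in slots $\geq 3$, so $E_{i,i+1}\cdot\mathcal{R}(n)_\lambda=0$ for $i\geq 2$, yielding $\langle\mathcal{R}(n)_\lambda\rangle_{GL_m(\mathbb{C})}\cong S^{(\lambda_1,\lambda_2,0,\ldots,0)}(\mathbb{C}^m)$.

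For part (ii), let $I\subseteq\ker(\varphi(n,2))$ denote the $GL_2(\mathbb{C})$-ideal generated by the listed $\mathcal{R}(n)_\lambda$. The main step is the equality $I=\ker(\varphi(n,2))$, for which I would compare $GL_2(\mathbb{C})$-equivariant Hilbert series. Using the bi-grading $\mathbb{C}[V^2]=\bigoplus_{a,b\geq 0}S^a(\mathbb{C}^2)\otimes S^b(\mathbb{C}^2)$, together with the facts that $\mathbb{Z}_n$ acts by $\omega^{a-b}$ on the $(a,b)$-bi-component and the reflection $\sigma$ swaps $(a,b)\leftrightarrow(b,a)$, one obtains the $GL_2(\mathbb{C})$-module decomposition
\[
\mathbb{C}[V^2]^{D_{2n}}\;\cong\;\bigoplus_{a\geq 0}S^2\bigl(S^a(\mathbb{C}^2)\bigr)\;\oplus\;\bigoplus_{\substack{a>b\geq 0\\ n\mid a-b}}S^a(\mathbb{C}^2)\otimes S^b(\mathbb{C}^2),
\]
further decomposable into irreducibles via Pieri's rule and Proposition~\ref{prop:well known GL-module isomorphisms}(iv). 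For the other side, the $GL_2(\mathbb{C})$-orbit of $\mathcal{R}(n)_{n,2}$ under iterated application of $E_{2,1}$ contains the three-term relations $\pi_{n-i,i}\rho_{0,2}-2\pi_{n-i-1,i+1}\rho_{1,1}+\pi_{n-i-2,i+2}\rho_{2,0}\in I$ for $i=0,\ldots,n-2$, which permit a staircase reduction of every linear $\pi$-monomial modulo $I$ to an element of $\mathcal{D}(2)\cdot\{1,\pi_{n,0},\pi_{n-1,1}\}$; the $\mathcal{R}(n)_{2n-2k,2k}$ then reduce each quadratic $\pi$-product to a polynomial in the $\rho$'s. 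A careful character count matches the two Hilbert series.

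For minimality, the weights $(n,2),(2n-2,2),\ldots,(2n-2\lfloor n/2\rfloor,2\lfloor n/2\rfloor)$ are pairwise distinct for $n\geq 3$. Since $\mathcal{R}(n)_{n,2}$ is the unique generator of degree $n+2<2n$, removing it eliminates the $S^{(n,2)}(\mathbb{C}^2)$ component in $\ker(\varphi(n,2))_{n+2}$. For each $k\geq 1$, to see $\mathcal{R}(n)_{2n-2k,2k}$ cannot be dropped, note that $\ker(\varphi(n,2))_{<2n}$ is contained in the principal ideal generated by $\mathcal{R}(n)_{n,2}$, so the obstruction reduces to showing that the multiplicity of $S^{(2n-2k,2k)}(\mathbb{C}^2)$ in $\mathcal{F}(n,2)_{n-2}\cdot\langle\mathcal{R}(n)_{n,2}\rangle_{GL_2(\mathbb{C})}$ (computed by Pieri's rule; zero when $\mathcal{F}(n,2)_{n-2}=0$, i.e., $n$ odd) is strictly less than the multiplicity of $S^{(2n-2k,2k)}(\mathbb{C}^2)$ in $\ker(\varphi(n,2))_{2n}$ (from the Hilbert series above). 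The main obstacle throughout is the Hilbert series computation for $\mathcal{F}(n,2)/I$, which requires a combinatorial normal form analysis for monomials in the $\pi$-variables modulo $I$---this is the bulk of the technical work.
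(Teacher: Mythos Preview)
Your treatment of part (i) is correct. Your direct verification that $\varphi(n,2)(\mathcal{R}(n)_{2n-2k,2k})=0$ via the symmetrized rewriting $\tfrac12\sum_{i=0}^{2k}(-1)^i\binom{2k}{i}T_i$ and the identity $(x_1y_2-x_2y_1)^2=4(q_{1,1}^2-q_{2,0}q_{0,2})$ is in fact slightly slicker than the paper's argument, which instead exploits $UT_2(\mathbb{C})$-invariance of the highest weight vector to reduce to the specialization $x_2\mapsto 0$ and then computes. Both are short and valid.

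Part (ii), however, contains a concrete error. Your claimed ``staircase reduction of every linear $\pi$-monomial modulo $I$ to an element of $\mathcal{D}(2)\cdot\{1,\pi_{n,0},\pi_{n-1,1}\}$'' is false as stated: the element $\pi_{n-2,2}$ has degree $n$, while every element of $I$ has degree $\ge n+2$, so $\pi_{n-2,2}$ certainly does not lie in $\mathcal{D}(2)+\mathcal{D}(2)\pi_{n,0}+\mathcal{D}(2)\pi_{n-1,1}+I$. The three-term relations $\pi_{n-i,i}\rho_{0,2}-2\pi_{n-i-1,i+1}\rho_{1,1}+\pi_{n-i-2,i+2}\rho_{2,0}\in I$ only allow you to move along the $\pi$-index \emph{at the cost of changing the accompanying $\rho$-factor}; they do not make $\{1,\pi_{n,0},\pi_{n-1,1}\}$ a $\mathcal{D}(2)$-module generating set for the $\pi$-linear part of $\mathcal{F}(n,2)/I$. (That would be true over the fraction field of $\mathcal{D}(2)$, not over $\mathcal{D}(2)$ itself.) Since your Hilbert-series comparison rests on this normal form, the argument does not go through as written.

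The paper sidesteps this by working relative to the \emph{primary generators} rather than to all of $\mathcal{D}(2)$. It proves a Hironaka decomposition (Proposition~\ref{prop:Hilbert series m=2}): $\mathbb{C}[V^2]^{D_{2n}}$ is a free module over $\mathcal{P}:=\mathbb{C}[\pi_{n,0},\rho_{2,0},\pi_{0,n},\rho_{0,2}]$ with explicit secondary generators $\{q_{1,1}^j,\,p_{n-i,i}\}$. The correct consequence of the three-term relations is that $\pi_{n-j,j}\rho_{1,1}$ lies in the ideal $(\pi_{n,0},\pi_{0,n},\rho_{2,0},\rho_{0,2})$ modulo $\mathcal{I}=\langle\mathcal{R}(n)_{n,2}\rangle_{GL_2}\cdot\mathcal{F}(n,2)$, which yields the normal form $\sum_i\mathcal{P}\rho_{1,1}^i+\sum_{j=1}^{n-1}\mathcal{P}\pi_{n-j,j}$ (all the $\pi_{n-j,j}$ survive, not just two). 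Matching this against the Hironaka decomposition gives the dimension count: exactly $\binom{n}{2}$ further relations are needed in degree $2n$, and $\dim\bigoplus_k\langle\mathcal{R}(n)_{2n-2k,2k}\rangle_{GL_2}=\sum_{k=1}^{\lfloor n/2\rfloor}(2n-4k+1)=\binom{n}{2}$. A final check that $\rho_{1,1}^{n+1}$ lies in the ideal generated so far completes the proof. For minimality the paper observes that every term of every element of $\mathcal{I}$ is divisible by some $\pi_\beta\rho_\gamma$, whereas no term of any $\mathcal{R}(n)_{2n-2k,2k}$ is; this immediately shows the degree-$2n$ generators are not redundant, and is more direct than your proposed multiplicity comparison.
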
 

\begin{remark}\label{remark:bonnafe} 
A minimal presentation of the algebra $\mathbb{C}[V\oplus V^*]^{D_{2n}}$  by generators and relations is given in \cite{alev-foissy} for $n=3,4,6$ and in \cite[Theorem 2.1]{bonnafe} for arbitrary $n$. Since  $V^*\cong V$ as $D_{2n}$-modules, these results can be translated to an explicit minimal presentation by generators and relations of 
$\mathbb{C}[V^2]^{D_{2n}}$. So the novel part of our Theorem~\ref{thm:m=2} 
is the nice explicit form of the relations $\mathcal{R}(n)_{\lambda}$  and the understanding of the $GL_2(\mathbb{C})$-module structure of the minimal syzygies. 
\end{remark} 

\begin{proofof}{Theorem~\ref{thm:m=2} (i)} We have $E_{1,2}.\mathcal{R}(n)_{n,2}=0$, $E_{1,1}.\mathcal{R}(n)_{n,2}=n\mathcal{R}(n)_{n,2}$, $E_{2,2}.\mathcal{R}(n)_{n,2}=2\mathcal{R}(n)_{n,2}$. Thus 
$\mathcal{R}(n)_{n,2}$ is a highest weight vector with weight $(n,2)$, and therefore  it generates an irreducible $GL_m(\mathbb{C})$-submodule of $\mathcal{F}(n,m)$ isomorphic to $S^{(n,2)}(\mathbb{C}^m)$ for $m\ge 2$. 
The elements $\rho_{2,0}$, $\rho_{1,1}^2-\rho_{2,0}\rho_{0,2}$ and 
$(-1)^k\frac 12\binom{2k}{k}\pi_{n-k,k}^2+\sum_{j=0}^{k-1}(-1)^j\binom{2k}{j}\pi_{n-j,j}\pi_{n-2k+j,2k-j}$ are annihilated by $E_{1,2}\in\mathfrak{gl}_2(\mathbb{C})$, so they are highest weight vectors with weights $(2,0)$, $(2,2)$ and 
$(2n-2k,2k)$. It follows that $\mathcal{R}(n)_{2n-2k,2k}$ is a highest weights vector of weight $(2n-2k,2k)$, and thus it generates an irreducible $GL_m(\mathbb{C})$-submodule of $\mathcal{F}(n,m)$ isomorphic to $S^{(2n-2k,2k)}(\mathbb{C}^m)$ for $m\ge 2$. 
The equality 
\begin{equation*}\label{eq:rel1gen} 
p_{n,0}q_{0,2}-2p_{n-1,1}q_{1,1}+p_{n-2,2}q_{2,0}=0\end{equation*} 
can be verified by direct computation, showing that 
$\varphi(n,2)(\mathcal{R}(n)_{n,2})=0$.  
To verify that $\mathcal{R}(n)_{2n-2k,2k}$ belongs to $\ker(\varphi(n,2))$ 
for $k=1,\dots,\lfloor \frac n2\rfloor$ we need to prove the equality  
\begin{equation}\label{eq:relkgen} 
(-1)^k\frac 12\binom{2k}{k}p_{n-k,k}^2+\sum_{j=0}^{k-1}(-1)^j\binom{2k}{j}p_{n-j,j}p_{n-2k+j,2k-j}-4^kq_{2,0}^{n-2k}(q_{1,1}^2-q_{2,0}q_{0,2})^k=0.  
\end{equation} 
Since $\mathcal{R}(n)_{2n-2k,2k}$ is a highest weight vector, it is fixed by the
subgroup $UT_2(\mathbb{C})$ of unipotent upper triangular matrices in $GL_2(\mathbb{C})$. Therefore $\varphi(n,2)(\mathcal{R}(n)_{2n-2k,2k})$ (the left hand side 
of \eqref{eq:relkgen}) is  a $UT_2(\mathbb{C})$-invariant  in 
$\mathbb{C}[V^2]$, and thus it  is constant along the $UT_2(\mathbb{C})$-orbits in $V^2$.   
The $UT_2(\mathbb{C})$-orbit of each point from a Zariski dense open subset in $V^2$ 
has non-empty intersection with the 
subset of lower triangular matrices  in 
$\mathbb{C}^{2\times 2}=V^2$. Therefore it is sufficient to show that 
$\sigma(\varphi(n,2)(\mathcal{R}(n)_{2n-2k,2k}))=0$, where $\sigma$ is the homomorphism 
$\mathbb{C}[x_1,y_1,x_2,y_2]\to \mathbb{C}[x_1,y_1,y_2]$ given by the specialization $x_2\mapsto 0$. Now $\sigma(p_{n-j,j})=y_1^{n-j}y_2^j$ for $j>0$, $\sigma(q_{0,2})=0$, 
$\sigma(q_{1,1})=\frac 12 x_1y_2$, hence 
\begin{equation}\label{eq:1relation}
\sigma(4^kq_{2,0}^{n-2k}(q_{1,1}^2-q_{2,0}q_{0,2})^k)=
x_1^ny_1^{n-2k}y_2^{2k}.\end{equation}
On the other hand, we have 
\begin{align}\label{eq:2relation}  
\sigma((-1)^k\frac 12\binom{2k}{k}p_{n-k,k}^2+\sum_{j=0}^{k-1}(-1)^j\binom{2k}{j}p_{n-j,j}p_{n-2k+j,2k-j})
\\ \notag = ((-1)^k\frac 12\binom{2k}{k}+\sum_{j=0}^{k-1}(-1)^j\binom{2k}{j})y_1^{2n-2k}y_2^{2k}+
x_1^ny_1^{n-2k}y_2^{2k}.
\end{align} 
Note that 
\begin{equation}\label{eq:3relation}
(-1)^k\frac 12\binom{2k}{k}+\sum_{j=0}^{k-1}(-1)^j\binom{2k}{j}=
\frac 12\sum_{j=0}^{2k}\binom{2k}{j}=\frac 12(1-1)^{2k}=0.\end{equation} 
The equalities \eqref{eq:1relation},\eqref{eq:2relation},\eqref{eq:3relation} 
show $\sigma(\varphi(n,2)(\mathcal{R}(n)_{2n-2k,2k}))=0$, implying in turn the equality \eqref{eq:relkgen},  and 
thus (i) is proved. 
\end{proofof} 

\begin{remark}\label{remark:elliott}
The expression $(-1)^k\frac 12\binom{2k}{k}\pi_{n-k,k}^2+\sum_{j=0}^{k-1}(-1)^j\binom{2k}{j}\pi_{n-j,j}\pi_{n-2k+j,2k-j}$ appears in the invariant theory of $m$-ary forms of degree $n$, see \cite[Section 114]{elliott}. 
Note also that this gives an explicit formula for the highest weight vector of the summand 
$S^{(2n-2k,2k)}(\mathbb{C}^m)$ of $S^2(S^n(\mathbb{C}^m))$in the decomposition given in Proposition~\ref{prop:well known GL-module isomorphisms} (iv). 
\end{remark} 

\subsection{Hironaka decomposition} 

The following statement is known, see for example 
\cite[Theorem 2.1]{bonnafe}. We shall present an alternative proof. 

\begin{proposition}\label{prop:Hilbert series m=2} 
The elements 
$p_{n,0}$, $q_{2,0}$, 
$p_{0,n}$, $q_{0,2}$  form a homogeneous system of parameters in $\mathbb{C}[V^2]^{D_{2n}}$, and $\mathbb{C}[V^2]^{D_{2n}}$ is a free module over the subalgebra 
$P(n,2):=\mathbb{C}[p_{n,0},q_{2,0},p_{0,n},q_{0,2}]$ generated by 
\begin{equation}\label{eq:secondary m=2} 
\{q_{1,1}^j, \ p_{n-i,i} \mid j=0,1,\dots,n; \quad i=1,\dots,n-1\}.\end{equation}
\end{proposition}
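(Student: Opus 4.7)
The proof splits into three parts: establishing that the four listed elements form an hsop, deducing freeness over $P(n,2)$ from Cohen--Macaulayness, and identifying the specific set \eqref{eq:secondary m=2} as a basis via Molien's formula combined with the relations of Theorem~\ref{thm:m=2}(i). Throughout write $R(2) := \mathbb{C}[V^2]^{D_{2n}}$.

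Since $R(2)$ has Krull dimension $4$, it suffices to check that $p_{n,0}, q_{2,0}, p_{0,n}, q_{0,2}$ have common zero set $\{0\}$ in $V^2$. The equations $x_1 y_1 = 0$ and $x_1^n + y_1^n = 0$ force $x_1 = y_1 = 0$, and symmetrically for the second vector; so these four elements form an hsop and are algebraically independent, whence $P(n,2)$ is a polynomial ring in them. By the Hochster--Eagon theorem $R(2)$ is Cohen--Macaulay, hence a free $P(n,2)$-module, and any set of homogeneous elements descending to a $\mathbb{C}$-basis of $R(2)/P(n,2)_+R(2)$ is automatically a free $P(n,2)$-basis.

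Next I would compute $H(R(2),t)$ via Molien's formula on the $4$-dimensional representation $V^2$: each rotation $c^j = \mathrm{diag}(\omega^j,\omega^{-j})$ contributes $1/\bigl((1-t\omega^j)(1-t\omega^{-j})\bigr)^2$ and each of the $n$ reflections (characteristic polynomial $1-t^2$ on $V$) contributes $1/(1-t^2)^2$. Averaging by $1/(2n)$ and clearing the common denominator $(1-t^n)^2(1-t^2)^2$ (using $\prod_{j=0}^{n-1}(1-t\omega^j) = 1-t^n$) simplifies the sum to
\[
H(R(2),t) \;=\; \frac{\sum_{j=0}^n t^{2j} + (n-1)\,t^n}{(1-t^n)^2(1-t^2)^2}.
\]
The numerator matches precisely the degrees of the proposed secondaries: one $t^{2j}$ per power $q_{1,1}^j$ ($0 \le j \le n$) and $n-1$ copies of $t^n$ for the $p_{n-i,i}$ ($1 \le i \le n-1$). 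It therefore suffices to show these $2n$ elements $P(n,2)$-span $R(2)$; the Hilbert series count then upgrades spanning to a free basis.

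For the spanning statement, Theorem~\ref{thm:hunziker} reduces the task to rewriting, modulo $P(n,2)$ and inside the claimed span, three types of products: (a) $q_{1,1}^{n+1}$, (b) $q_{1,1}\cdot p_{n-i,i}$, and (c) $p_{n-i,i}\cdot p_{n-i',i'}$. Type (b) follows from the relation $\mathcal{R}(n)_{n,2}$ of Theorem~\ref{thm:m=2}(i) together with its images under the lowering operator $E_{2,1}\in \mathfrak{gl}_2(\mathbb{C})$, which express each $q_{1,1}\,p_{n-i,i}$ as a $P(n,2)$-combination of the $p_{n-i',i'}$. Type (c) follows from the quadratic relations $\mathcal{R}(n)_{2n-2k,2k}$ for $1\le k\le \lfloor n/2\rfloor$, which express the symmetric quadratic combinations of $p$'s as $P(n,2)$-multiples of powers of $q_{1,1}$; and (a) is recovered from the extremal case $k=\lfloor n/2\rfloor$ after applying (c) reductions (e.g.\ by multiplying the congruence expressing $q_{1,1}^n$ as a quadratic combination of $p$'s modulo $P(n,2)_+ R(2)$ through by $q_{1,1}$ and applying (b) to each resulting factor). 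The main obstacle is the bookkeeping in these iterated reductions, which is controlled by a degree induction together with the a priori Hilbert series bound established in the previous paragraph.
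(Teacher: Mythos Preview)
Your route differs from the paper's. The paper never touches Molien's formula or the relations from Theorem~\ref{thm:m=2}(i); instead it passes to the cyclic subgroup $H\lhd D_{2n}$, where $\mathbb{C}[V^2]^H$ is monomial and an explicit free $P(n,2)$-basis of monomials is immediate, and then reads off the $D_{2n}$-invariant secondaries from the action of the order-two quotient on those monomial cosets. That argument avoids any reduction bookkeeping entirely.

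Your argument is viable in outline but has a real gap at step~(c). The $\lfloor n/2\rfloor$ highest-weight relations $\mathcal{R}(n)_{2n-2k,2k}$ by themselves only involve the products $p_{n-j,j}p_{n-2k+j,2k-j}$ whose second indices sum to an even number $2k$; products such as $p_{n-1,1}p_{n-2,2}$ (with odd index sum) are simply absent from these relations. Already for $n=4$ you have six products $p_{3,1}^2,\,p_{3,1}p_{2,2},\,p_{3,1}p_{1,3},\,p_{2,2}^2,\,p_{2,2}p_{1,3},\,p_{1,3}^2$ to reduce and only two highest-weight relations touching three of them. To reach the rest you must apply $E_{2,1}$ to the $\mathcal{R}(n)_{2n-2k,2k}$ exactly as you did for type~(b), obtaining all $\binom{n}{2}$ relations in the $GL_2(\mathbb{C})$-orbits, and then you still owe a check that their ``$p$-quadratic parts'' modulo $P(n,2)_+$ are linearly independent, so that the resulting $\binom{n}{2}\times\binom{n}{2}$ system is solvable. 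That linear-algebra step is not mere bookkeeping; in fact, in the paper's logical order it is the Proposition under discussion that supplies the dimension count the proof of Theorem~\ref{thm:m=2}(ii) uses to conclude those relations suffice. The Hilbert series you computed tells you only that spanning and linear independence are equivalent for the proposed set; it does not by itself furnish either. A cleaner fix along your lines would be to argue linear independence of the proposed secondaries in $R(2)/P(n,2)_+R(2)$ directly (e.g.\ by a leading-monomial argument after the specialization $x_2\mapsto 0$), which together with your Molien count would finish without invoking the type~(c) relations at all.
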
 
\begin{proof} The common zero locus in $V^2$ of  
$p_{n,0}$, $q_{2,0}$, 
$p_{0,n}$, $q_{0,2}$ is the zero element of $V^2$, hence these polynomials  form a homogeneous system of parameters in $\mathbb{C}[V^2]^{D_{2n}}$ and in $\mathbb{C}[V^2]$ (so both of these algebras are finitely generated free modules over their subalgebra 
$P(n,2)$, see for example \cite[Section 2.3]{sturmfels}).  

Denote by $H$ the cyclic subgroup of $D_{2n}$ generated by 
$\left(\begin{array}{cc}\omega & 0 \\0 & \omega^{-1}\end{array}\right)$. 
Then $\mathbb{C}[V^2]^H$ is spanned as a $\mathbb{C}$-vector space by the monomials 
$x_1^{\alpha_1}x_2^{\alpha_2}y_1^{\beta_1}y_2^{\beta_2}$, where 
$\alpha_1+\alpha_2-\beta_1-\beta_2$ is divisible by $n$. One can easily deduce 
(see for example the method of the proof of Proposition~\ref{prop:cyclic}) 
that 
$\mathbb{C}[V^2]^H$ is a free $P(n,2)$-module generated by 
\begin{align*}\{(x_1y_2)^j,\ (x_2y_1)^j,\ x_1^{n-i}x_2^i,\ y_1^{n-i}y_2^i, \ x_1^n,\  x_2^n,
\ x_1^nx_2^n \mid 
j=0,1,\dots,n-1, \quad i=1,\dots,n-1\}.\end{align*}
Denote by $(P(n,2)_+)$ the ideal in $\mathbb{C}[V^2]$ generated by 
$p_{n,0}$, $q_{2,0}$, 
$p_{0,n}$, $q_{0,2}$. 
The other generator $\left(\begin{array}{cc} 0 & 1 \\ 1 & 0 \end{array}\right)$ of $D_{2n}$ 
maps the cosets of $x_1^n$ and $x_2^n$ in $\mathbb{C}[V^2]/(P(n,2)_+)$ to their negative, 
fixes the coset of $x_1^nx_2^n$ in $\mathbb{C}[V^2]/(P(n,2)_+)$,  permutes the other 
elements in the above $P(n,2)$-module generating system of $\mathbb{C}[V^2]^H$, 
and fixes all elements of $P(n,2)$. It follows that  
$\mathbb{C}[V^2]^{D_{2n}}$ is a free $P(n,2)$-module generated by 
\[\{(x_1y_2)^j+(y_1x_2)^j,\ x_1^{n-i}x_2^i+y_1^{n-i}y_2^i,\ x_1^nx_2^n \mid 
j=0,1,\dots,n-1, \quad i=1,\dots,n-1\}.\] 
Note finally that $(x_1y_2)^j+(y_1x_2)^j$ is congruent to $2^jq_{1,1}^j$ modulo 
$(P(n,2)_+)$, and  $x_1^nx_2^n$ is congruent to $-\frac 12(2q_{1,1})^n$ modulo 
$(P(n,2)_+)$. This shows that 
$\mathbb{C}[V^2]^{D_{2n}}$ is a free module over  $P(n,2)$ generated by \eqref{eq:secondary m=2}.  
\end{proof} 

\subsection{Further relations} 

Define $\mathcal{R}(n)_{n,2}^{n-j,2+j}$ for $j=0,1,\dots,n-2$ recursively by setting 
$\mathcal{R}(n)_{n,2}^{n,2}:=\mathcal{R}(n)_{n,2}$, and set 
\[\mathcal{R}(n)_{n,2}^{n-j-1,2+j+1}:=\frac{1}{n-2-j}E_{2,1}.\mathcal{R}(n)_{n,2}^{n-j,2+j}\] 
for $j=0,1,\dots,n-3$.  
Then $\{\mathcal{R}(n)_{n,2}^{n-j,2+j}\mid j=0,1,\dots,n-2\}$ is a $\mathbb{C}$-vector space basis in $\langle \mathcal{R}(n)_{n,2}\rangle_{GL_2(\mathbb{C})}$. Moreover, one shows by induction on $j$ that 
\begin{equation}\label{eq:R_{n,2}^(j)}
\mathcal{R}(n)_{n,2}^{n-j,2+j}=\pi_{n-j,j}\rho_{0,2}-2\pi_{n-j-1,j+1}\rho_{1,1}+\pi_{n-j-2,j+2}\rho_{2,0}.
\end{equation} 

\begin{proofof}{Theorem~\ref{thm:m=2} (ii)} 
The elements $q_{2,0},q_{1,1},q_{0,2}$ in $\mathbb{C}[V^2]^{D_{2n}}$ are algebraically independent. Therefore any non-zero element in $\ker(\varphi(n,2))$ involves a variable $\pi_{n-j,j}$ for some $j$. 
Moreover, a non-trivial linear combination of the elements $p_{n-j,j}\in \mathbb{C}[V^2]^{D_{2n}}$ $(j=0,1,\dots,n)$ involves a term $x_1^ix_2^{n-i}$ for some $i$, hence is not contained in the algebra generated by $q_{2,0},q_{1,1},q_{0,2}$ 
(all of whose terms have the same degree in the $x$-variables as in the $y$-variables). 
So 
every non-zero element in $\ker(\varphi(n,2))$ involves a term divisible by a monomial of the form $\pi_{n-j,j}\pi_{n-i,i}$ or $\pi_{n-j,j}\rho_{2-i,i}$. In particular, the minimal possible degree of an element of $\ker(\varphi(n,2))$ is $n+2$, the degree of $\mathcal{R}(n)_{n,2}$. 
It follows that a basis of $\langle \mathcal{R}(n)_{n,2}\rangle_{GL_2(\mathbb{C})}$ is contained in any minimal homogeneous generating system of $\ker(\varphi(n,2))$. 
Denote by $\mathcal{I}$ the ideal of $\mathcal{F}(n,2)$ generated by $\langle \mathcal{R}(n)_{n,2}\rangle_{GL_2(\mathbb{C})}$, and denote by $\mathcal{P}$ the subalgebra of $\mathcal{F}(n,2)$ generated by $\pi_{n,0},\rho_{2,0},\pi_{0,n},\rho_{0,2}$. The relations \eqref{eq:R_{n,2}^(j)} show that modulo $\mathcal{I}$, $\pi_{n-j,j}\rho_{1,1}$ is congruent to an element in the ideal of $\mathcal{F}(n,2)$ generated by $\pi_{n,0}$, $\pi_{0,n}$, 
$\rho_{2,0}$, $\rho_{0,2}$.
Consequently, 
any element of degree less than $2n$ in $\mathcal{F}(n,2)$ is congruent modulo $\mathcal{I}$ to an element of $\sum_{i=0}^{n-1}\mathcal{P}\rho_{1,1}^i+\sum_{j=1}^{n-1}\mathcal{P}\pi_{n-j,j}$. It follows by Proposition~\ref{prop:Hilbert series m=2} that a minimal homogeneous generating system of $\ker(\varphi)$ has no elements of degree $n+3,n+4,\dots,2n-1$. 

Set 
\[\mathcal{K}:=\bigoplus_{\lambda}\langle \mathcal{R}(n)_{\lambda}\rangle_{GL_2(\mathbb{C})}\] 
where $\lambda\in\{(2n-2,2),(2n-4,4),\dots,(2n-2\lfloor\frac n2\rfloor,
2\lfloor\frac n2\rfloor)\}$ (the above sum is direct, because by Theorem~\ref{thm:m=2} (i) the summands are pairwise non-isomorphic minimal $GL_2(\mathbb{C})$-invariant subspaces 
in $\mathcal{F}(n,2)$). 
Observe next that any term of any element of the ideal $\mathcal{I}$ of $\mathcal{F}(n,2)$  is divisible by a monomial of the form 
$\pi_{n-j,j}\rho_{2-i,i}$, whereas no term of the elements in $\mathcal{K}$ is divisible by a monomial of the form $\pi_{n-j,j}\rho_{2-i,i}$. It follows that $\mathcal{R}(n)_{n,2}^{n-j,2+j}$ $(j=0,\dots,n-2)$ together with a basis of $\mathcal{K}$ is part of some minimal homogeneous generating system of $\ker(\varphi(n,2))$. The dimension of $\mathcal{K}$ is 
$\sum_{i=1}^{\lfloor\frac n2\rfloor}(2n-4i+1)=\frac{n(n-1)}{2}$. 
On the other hand, it follows from Proposition~\ref{prop:Hilbert series m=2} that there must exist $\binom{n}{2}$ elements of degree $2n$ in $\ker(\varphi(n,2))$ that allow to rewrite 
the products $\pi_{n-j,j}\pi_{n-i,i}$ as an element of 
 $\sum_{i=0}^n\mathcal{P}\rho_{1,1}^i+\sum_{j=1}^{n-1}\mathcal{P}\pi_{n-j,j}$, 
 and these relations together with the earlier relations of degree $n+2$ are sufficient to generate $\ker(\varphi(n,2))$ up to degree $2n$. 
 Denote by $\mathcal{J}$ the ideal generated by $I\cup \mathcal{K}$. 
 Our observations so far imply that any element of $\mathcal{F}(n,2)$ is congruent modulo $\mathcal{J}$ to an element of 
$\sum_{i=0}^\infty \mathcal{P}\rho_{1,1}^i+\sum_{j=1}^{n-1}\mathcal{P}\pi_{n-j,j}$.  
It remains to show that $\rho_{1,1}^{n+1}$ is congruent modulo $\mathcal{J}$ to an element in 
$\sum_{i=0}^n\mathcal{P}\rho_{1,1}^i+\sum_{j=1}^{n-1}\mathcal{P}\pi_{n-j,j}$. When $n$ is even, consider the element  $\rho_{1,1}\mathcal{R}_{n,n}$. It belongs to $\mathcal{J}$ and has the term 
$-4^n\rho_{1,1}^{n+1}$.  The other terms of $\rho_{1,1}\mathcal{R}_{n,n}$ are congruent modulo $\mathcal{I}$ to an element of 
the ideal of $\mathcal{F}(n,2)$ generated by $\pi_{n,0}$, $\pi_{0,n}$, 
$\rho_{2,0}$, $\rho_{0,2}$ by \eqref{eq:R_{n,2}^(j)}. It follows that $\rho_{1,1}^{n+1}$ is congruent modulo $\mathcal{J}$ to an element of  
$\sum_{i=0}^n\mathcal{P}\rho_{1,1}^i+\sum_{j=1}^{n-1}\mathcal{P}\pi_{n-j,j}$.   
 So we are done when $n$ is even. For odd $n$ consider the element 
 $\rho_{1,1}(E_{2,1}.\mathcal{R}_{n+1,n-1})$ of $\mathcal{J}$. It has the term 
 $-2^n\rho_{1,1}^{n+1}$, and all the other terms are congruent modulo $\mathcal{I}$ to elements in 
 $\sum_{i=0}^n \mathcal{P}\rho_{1,1}^i+\sum_{j=1}^{n-1}\mathcal{P}\pi_{n-j,j}$.  
 This finishes the proof also for the case of odd $n$. 
  \end{proofof}

\section{The case $n=4$} \label{sec:n=4} 

\subsection{Secondary S-generating systems} \label{sec:secondary S-generating} 

In this subsection we return to the general setup of Section~\ref{sec:intro}. The action of the subgroup of diagonal matrices in $GL_m(\mathbb{C})$ induces an $\mathbb{N}_0^m$-grading on $\mathbb{C}[V^m]$ and  $\mathbb{C}[V^m]^G$. 
Write $\mathbb{C}[V^m]_{\alpha}$ and $\mathbb{C}[V^m]^G_{\alpha}$ for the multihomogeneous component of  $\mathbb{C}[V^m]$ and $\mathbb{C}[V^m]^G$ of multidegree $\alpha=(\alpha_1,\dots,\alpha_m)$. 

Now assume that $G$ is finite. Take a homogeneus system of parameters 
$p_1,\dots,p_k$ in $\mathbb{C}[V]^G$ (so $k=\dim(V)$). 
For $j=1,\dots,m$ and $i=1,\dots,k$ denote by $p_i^{(j)}$ the element of $\mathbb{C}[V^m]^G$ that maps $(v_1,\dots,v_m)\in V^m$ to $p_i(v_j)$. 
Then 
\begin{equation}\label{eq:general primary} 
p_1^{(j)},\dots,p_k^{(j)}\qquad (j=1,\dots,m) 
\end{equation} 
is a homogeneous system of parameters for $\mathbb{C}[V^m]^G$. 
This means that denoting by $P(m)$ the $\mathbb{C}$-subalgebra of $\mathbb{C}[V^m]^G$ 
generated by the (algebraically independent) elements \eqref{eq:general primary}, the space $\mathbb{C}[V^m]^G$ is a 
finitely generated free $P(m)$-module. The elements \eqref{eq:general primary} are 
called \emph{primary generators of} $\mathbb{C}[V^m]^G$, whereas a finite free $P(m)$-module generating system 
of $\mathbb{C}[V^m]^G$ is called a \emph{system of secondary generators} of 
$\mathbb{C}[V^m]^G$. 
By the Graded Nakayama Lemma a set of homogeneous elements in 
$\mathbb{C}[V^m]^G$ forms a system of secondary generators if and only if they form a basis in a vector space direct complement in $\mathbb{C}[V^m]^G$ of $(P(m)^+)$, the ideal in $\mathbb{C}[V^m]^G$ generated by the elements 
\eqref{eq:general primary}.

View the symmetric group $S_m$ as the subgroup of permutation matrices in $GL_m(\mathbb{C})$. Then $s\in S_m$ maps $\mathbb{C}[V^m]_{\alpha}$ to 
$\mathbb{C}[V^m]_{s\cdot \alpha}$, where $s\cdot \alpha=(\alpha_{s^{-1}(1)},\dots,\alpha_{s^{-1}(m)})$.

\begin{definition} \label{def:secondary S-generating system} 
 A subset $L$ of $\mathbb{C}[V^m]^G$ is called a \emph{system of secondary S-generators of} $\mathbb{C}[V^m]^G$ if 
 $L$ consists of multihomogeneous elements of decreasing multidegrees and
 \[L^*:=\{s\cdot f\mid f\in L,\quad s\in S_m/\mathrm{Stab}(\underline{\deg}(f))\},\]  
 is a system of  secondary generators  of $\mathbb{C}[V^m]^G$
 where $\underline{\deg}(f)\in\mathbb{N}_0^m$ stands for the multidegree of $f$ 
 and $S_m/\mathrm{Stab}(\underline{\deg}(f))$ stands for a chosen set of left coset representatives in $S_m$ with respect to the stabilizer subgroup $\mathrm{Stab}(\underline{\deg}(f))$ of the multidegree of $f$. 
 (Note that this notion depends on the choice $p_1,\dots,p_k$ of  homogeneous system of parameters in $\mathbb{C}[V]^G$.) 
\end{definition}

\begin{proposition}\label{prop:secondary S-generating system} 
A system of secondary S-generators of $\mathbb{C}[V^m]^G$ exists. 
\end{proposition}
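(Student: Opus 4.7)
The plan is to exploit the compatibility between the $S_m$-action on $\mathbb{C}[V^m]$ and the $\mathbb{N}_0^m$-multigrading, and then choose a secondary generating system orbit-by-orbit. First I would verify that the subalgebra $P(m)$ is $S_m$-stable: any $s \in S_m$ sends $p_i^{(j)}$ to $p_i^{(s(j))}$, so $S_m$ merely permutes the chosen primary generators. Consequently the ideal $(P(m)^+)$ of $\mathbb{C}[V^m]^G$ is both $S_m$-stable and multihomogeneous, and the finite-dimensional quotient $A := \mathbb{C}[V^m]^G/(P(m)^+)$ inherits an $\mathbb{N}_0^m$-grading together with an $S_m$-action satisfying $s(A_\alpha) = A_{s \cdot \alpha}$ for every $\alpha$.

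Next I would partition the support $\{\alpha \in \mathbb{N}_0^m \mid A_\alpha \neq 0\}$ into $S_m$-orbits. Finite-dimensionality of $A$ ensures that there are only finitely many such orbits. For each orbit $\mathcal{O}$, select the dominant representative $\lambda = (\lambda_1, \ldots, \lambda_m) \in \mathcal{O}$ with $\lambda_1 \ge \cdots \ge \lambda_m$, pick any $\mathbb{C}$-basis of $A_\lambda$, and lift each basis vector to a multihomogeneous element of $\mathbb{C}[V^m]^G_\lambda$, gathering the lifts into a finite set $L_\mathcal{O}$. Then $L := \bigcup_\mathcal{O} L_\mathcal{O}$ consists of multihomogeneous elements all of whose multidegrees are decreasing, as demanded by Definition~\ref{def:secondary S-generating system}.

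It remains to verify that $L^*$ is a system of secondary generators of $\mathbb{C}[V^m]^G$. By the Graded Nakayama Lemma it is enough to show that the images in $A$ of the elements of $L^*$ form a $\mathbb{C}$-basis of $A$. Fix an orbit $\mathcal{O}$, let $\lambda$ be its dominant representative, $H := \mathrm{Stab}(\lambda)$, and let $s_1, \ldots, s_r$ be a chosen set of coset representatives for $S_m/H$. Each map $s_i \colon A_\lambda \to A_{s_i \cdot \lambda}$ is a $\mathbb{C}$-linear isomorphism, so it carries the chosen basis of $A_\lambda$ to a basis of $A_{s_i \cdot \lambda}$. The multidegrees $s_1 \cdot \lambda, \ldots, s_r \cdot \lambda$ are pairwise distinct (by definition of the stabilizer) and exhaust $\mathcal{O}$, so assembling these translates over $i$ yields a basis of $\bigoplus_{\alpha \in \mathcal{O}} A_\alpha$, and taking the union over all orbits produces a basis of $A$.

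There is no serious obstacle in this argument; the only points requiring care are that the lifts of the chosen bases stay multihomogeneous (automatic since $(P(m)^+)$ is multihomogeneous, so each $A_\alpha$ lifts to the multihomogeneous component $\mathbb{C}[V^m]^G_\alpha$) and that the coset representatives really produce distinct multidegrees filling out the whole orbit (immediate from the definition of $\mathrm{Stab}(\lambda)$).
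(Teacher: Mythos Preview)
Your argument is correct and follows essentially the same route as the paper: both proofs observe that $S_m$ permutes the primary generators and hence stabilizes $(P(m)^+)$, and then construct the secondary S-generating system by choosing, for each dominant multidegree $\lambda$, a basis of the $\lambda$-piece modulo $(P(m)^+)$ and transporting it to the rest of the $S_m$-orbit. The only cosmetic difference is that you phrase the construction in the quotient $A=\mathbb{C}[V^m]^G/(P(m)^+)$ and invoke the Graded Nakayama Lemma explicitly, whereas the paper works directly with direct complements of $(P(m)^+)\cap\mathbb{C}[V^m]^G_\alpha$ inside $\mathbb{C}[V^m]^G_\alpha$.
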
 

\begin{proof} 
 Since the elements in \eqref{eq:general primary} are multihomogeneous, the ideal 
 $(P(m)^+)$ of $\mathbb{C}[V^m]^G$ is spanned by multihomogenous elements, and therefore a system of secondary generators consisting of multihomogeneous elements exist. 
Moreover, a set $M$ of multihomogeneous elements forms a system of secondary generators if and only if for each multidegree $\alpha\in\mathbb{N}_0^m$ the subset $M\cap \mathbb{C}[V^m]_{\alpha}$ is a $\mathbb{C}$-vector space basis in a direct complement 
in $\mathbb{C}[V^m]^G_{\alpha}$ of  $(P(m)^+)\cap \mathbb{C}[V^m]_{\alpha}$. 

The action of $S_m$ preserves the set \eqref{eq:general primary}, hence it preserves the algebra $P(m)$ and the ideal  $(P(m)^+)$ in  $\mathbb{C}[V^m]^G$. An element $s\in S_m$ gives a vector space isomorphism between $\mathbb{C}[V^m]_{\alpha}$ and $\mathbb{C}[V^m]_{s\cdot\alpha}$, such that the subspace $\mathbb{C}[V^m]^G_{\alpha}$ is mapped onto $\mathbb{C}[V^m]^G_{s\cdot \alpha}$ and 
$\mathbb{C}[V^m]_{\alpha}\cap (P(m)^+)$ is mapped onto 
$\mathbb{C}[V^m]_{s\cdot \alpha}\cap (P(m)^+)$. 
Therefore to get 
a system of secondary S-generators of $\mathbb{C}[V^m]^G$ we just need to take for each decreasing multidegree $\alpha$ a basis in a direct complement in 
$\mathbb{C}[V^m]^G_{\alpha}$ of 
$\mathbb{C}[V^m]_{\alpha}\cap (P(m)^+)$, and the union of these sets as $\alpha$ ranges over all decreasing multidegrees will be a system of secondary S-generators. 
\end{proof}

\subsection{Hironaka decomposition for $n=4$, $m=3$} \label{sec:Hironaka for n=4, m=3} 

The elements 
\begin{equation}\label{eq:primary}  p_{4,0,0},p_{0,4,0},p_{0,0,4},q_{2,0,0},q_{0,2,0},q_{0,0,2}\end{equation}  
constitute a 
\emph{homogeneous system of parameters} in the algebras  $\mathbb{C}[V^3]$ and  $\mathbb{C}[V^3]^{D_8}$. We shall refer to the elements \eqref{eq:primary} as the \emph{primary generators of} $\mathbb{C}[V^3]^{D_8}$, and denote by 
$P(4,3)$ the subalgebra of $\mathbb{C}[V^3]$ generated by them. We are looking for an explicit free $P(4,3)$-module generating system (called \emph{system of secondary generators})  of $\mathbb{C}[V^3]^{D_8}$.

Denote by $H$ the cyclic subgroup of $D_8$ generated by 
$\left(\begin{array}{cc}\omega & 0 \\0 & \omega^{-1}\end{array}\right)$. 
The elements \eqref{eq:primary} constitute a homogeneous system of parameters in 
$\mathbb{C}[V^3]^H$ as well.  

\begin{proposition}\label{prop:cyclic} 
The following table gives a secondary S-generating system of $\mathbb{C}[V^3]^H$ 
with respect to $P(4,3)$:  
\[\begin{array}{c||c}
\text{multidegree} & \text{generator} \\
\hline\hline 
(0,0,0) & 1 \\
(1,1,0) & x_1y_2,\ y_1x_2 \\
(2,1,1) & x_1^2x_2x_3,\ y_1^2y_2y_3,\ x_1^2y_2y_3,\ y_1^2x_2x_3 \\
(2,2,0) & x_1^2x_2^2,\ y_1^2y_2^2,\ x_1^2y_2^2,\ y_1^2x_2^2 \\ 
(3,1,0) & x_1^3x_2,\ y_1^3y_2 \\ 
(4,0,0) & x_1^4 \\
(3,2,1) & x_1^3y_2^2y_3,\ y_1^3x_2^2x_3,\ x_1^3x_2^2y_3,\ y_1^3y_2^2x_3 \\ 
(3,3,0) & x_1^3y_2^3,\ y_1^3x_2^3 \\
(4,1,1) & x_1^4x_2y_3,\  x_1^4y_2x_3 \\ 
(3,3,2) & x_1^3x_2^3x_3^2,\ y_1^3y_2^3y_3^2, \ x_1^3x_2^3y_3^2,\ y_1^3y_2^3x_3^2 \\  
(4,2,2) & x_1^4x_2^2x_3^2,\ x_1^4y_2^2y_3^2, \ x_1^4x_2^2y_3^2,\ x_1^4y_2^2x_3^2 \\
(4,3,1) & x_1^4x_2^3x_3,\ x_1^4y_2^3y_3 \\ 
(4,4,0) & x_1^4x_2^4 \\ 
(4,3,3) & x_1^4x_2^3y_3^3,\ x_1^4y_2^3x_3^3 \\ 
(4,4,4) & x_1^4x_2^4x_3^4
\end{array}\]
\end{proposition}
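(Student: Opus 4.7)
The plan is to reduce the statement, via the $H$-Reynolds operator, to enumerating $H$-invariant monomials in a suitable coinvariant algebra, and then to carry out this enumeration one decreasing multidegree at a time.

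First, since $P(4,3)\subset \mathbb{C}[V^3]^H$, averaging over $H$ is a $P(4,3)$-linear projection of $\mathbb{C}[V^3]$ onto $\mathbb{C}[V^3]^H$ carrying the ideal $(P(4,3)^+)\mathbb{C}[V^3]$ onto $(P(4,3)^+)\mathbb{C}[V^3]^H$. Consequently for each multidegree $\alpha\in\mathbb{N}_0^3$ one obtains a natural $\mathbb{C}$-linear isomorphism
\[\mathbb{C}[V^3]^H_{\alpha}/((P(4,3)^+)\mathbb{C}[V^3]^H)_{\alpha}\;\cong\;\bigl(\mathbb{C}[V^3]/(P(4,3)^+)\bigr)_{\alpha}^{H}.\]
By Proposition~\ref{prop:secondary S-generating system} I then only need to exhibit, for each \emph{decreasing} multidegree $\alpha$, a basis of the right-hand side and to match it against the corresponding row of the table.

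Next, because the primary generators split over the three variable pairs, the coinvariant algebra factorises as $\mathbb{C}[V^3]/(P(4,3)^+)=A\otimes A\otimes A$, where $A=\mathbb{C}[x,y]/(xy,\,x^4+y^4)$ has $\mathbb{C}$-basis $\{1,x,y,x^2,y^2,x^3,y^3,x^4\}$. The $H$-action respects this tensor factorisation, and the above monomial basis is an $H$-eigenbasis: the weight modulo $4$ of $x^ay^b$ equals $a-b$. Hence an $\mathbb{N}_0^3$-multigraded $\mathbb{C}$-basis of $(A\otimes A\otimes A)^H$ will be given by those tensor products $u_1\otimes u_2\otimes u_3$ for which each $u_i$ is a basis monomial of degree $\alpha_i$ and $\sum_{i=1}^3 w(u_i)\equiv 0\pmod 4$, with $w$ denoting the $H$-weight.

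Finally, I would run the enumeration. For each decreasing $\alpha=(\alpha_1,\alpha_2,\alpha_3)$ with $\alpha_i\le 4$ the candidate $u_i$ has weight $\pm\alpha_i$ when $\alpha_i\in\{1,2,3\}$ and weight $0$ when $\alpha_i\in\{0,4\}$; one lists the sign vectors $(\epsilon_1,\epsilon_2,\epsilon_3)\in\{\pm 1\}^3$ satisfying $\sum\epsilon_i\alpha_i\equiv 0\pmod 4$ (suppressing $\epsilon_i$ when $\alpha_i\in\{0,4\}$) and reads off the corresponding monomials; these should be precisely the entries of the table. The decreasing multidegrees omitted from the table, such as $(2,2,2)$, $(3,2,2)$, $(4,2,1)$, or any $\alpha$ with $\alpha_1+\alpha_2+\alpha_3$ odd, are exactly those admitting no such sign vector. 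The main obstacle is purely bookkeeping; a useful final consistency check is that the total count of table entries, summed over all $S_3$-orbits of multidegrees, must equal the $P(4,3)$-rank of $\mathbb{C}[V^3]^H$, namely $\dim_{\mathbb{C}}\bigl(\mathbb{C}[V^3]/(P(4,3)^+)\bigr)/|H|=8^3/4=128$, which confirms the enumeration is exhaustive.
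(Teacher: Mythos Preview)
Your proposal is correct and follows essentially the same route as the paper: both arguments identify a monomial $H$-eigenbasis of the coinvariant algebra $\mathbb{C}[V^3]/(P(4,3)^+)\cong A^{\otimes 3}$ with $A=\mathbb{C}[x,y]/(xy,x^4+y^4)$ and then select the weight-zero tensors. The only cosmetic difference is that the paper justifies the basis $\{1,x,y,x^2,y^2,x^3,y^3,x^4\}$ of $A$ via a one-line Gr\"obner basis computation (initial ideal $(xy,y^4,x^5)$) rather than by your direct argument, and it leaves the Reynolds/exactness step implicit by simply noting that each monomial spans an $H$-invariant subspace.
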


\begin{proof} 
$\mathbb{C}[V^3]^H$ is spanned as a $\mathbb{C}$-vector space by the monomials 
$x_1^ix_2^jx_3^ky_1^ly_2^my_3^r$ such that $i+j+k-l-m-r$ is divisible by $4$. 
Consider the lexicographic monomial order in $\mathbb{C}[x,y]$ induced by the order $x<y$ of the variables. Then it is easy to check that the ideal generated by $xy$ and $x^4+y^4$ has the Gr\"obner basis $xy,x^4+y^4,x^5=x(x^4+y^4)-y^3(xy)$, hence the ideal generated by the initial monomials in $(xy,x^4+y^4)$ is $(xy,y^4,x^5)$. It follows that the monomials not divisible by any of 
$x_1y_1$, $x_2y_2$, $x_3y_3$, $y_1^4$, $y_2^4$, $y_3^4$, $x_1^5$, $x_2^5$, 
$x_3^5$ form a basis in a vector space direct complement of the ideal in $\mathbb{C}[V^3]$ generated by the elements in \eqref{eq:primary}. 
As each monomial spans an $H$-invariant subspace in $\mathbb{C}[V^3]$, 
the $H$-invariant monomials among them form a system of secondary generators for 
$\mathbb{C}[V^3]^H$, and the table above contains all those with decreasing multidegree. 
\end{proof} 

\begin{proposition}\label{prop:Hironaka n=4, m=3} 
The following table gives a secondary S-generating system of $\mathbb{C}[V^3]^{D_8}$: 
\[\begin{array}{c||c}
\text{multidegree} & \text{generator} \\
\hline\hline 
(0,0,0) & 1 \\
(1,1,0) & q_{1,1,0} \\
(2,1,1) & p_{2,1,1},\ q_{1,1,0}q_{1,0,1} \\
(2,2,0) & p_{2,2,0},\ q_{1,1,0}^2 \\ 
(3,1,0) & p_{3,1,0} \\ 
(3,2,1) & p_{3,1,0}q_{0,1,1},\ q_{1,1,0}^2q_{1,0,1} \\ 
(3,3,0) & q_{1,1,0}^3 \\
(4,1,1) & p_{3,1,0}q_{1,0,1} \\ 
(3,3,2) & p_{2,1,1}p_{1,2,1},  \ p_{3,1,0}q_{0,1,1}^2 \\  
(4,2,2) &  p_{3,1,0}q_{1,0,1}q_{0,1,1},\ q_{1,1,0}^2q_{1,0,1}^2 \\
(4,3,1) & q_{1,1,0}^3q_{1,0,1}  \\ 
(4,4,0) & q_{1,1,0}^4 \\ 
(4,3,3) & p_{3,1,0}q_{1,0,1}q_{0,1,1}^2 
\end{array}\]
\end{proposition}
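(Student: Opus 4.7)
The plan is to deduce the $D_8$-result from the $H$-result of Proposition~\ref{prop:cyclic} via a Maschke averaging argument for the involution $\tau := \left(\begin{smallmatrix} 0 & 1 \\ 1 & 0 \end{smallmatrix}\right) \in D_8$. Since $D_8 = H \rtimes \langle \tau \rangle$, one has $\mathbb{C}[V^3]^{D_8} = (\mathbb{C}[V^3]^H)^{\tau}$, and a direct check shows that $\tau$ fixes each primary generator listed in \eqref{eq:primary}. Consequently $\tau$ preserves $P(4,3)$ and the ideal $\mathfrak{m} := (P(4,3)^+)\mathbb{C}[V^3]^H$, so it acts on the finite-dimensional multigraded $\mathbb{C}$-algebra $A := \mathbb{C}[V^3]^H / \mathfrak{m}$. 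In characteristic zero Maschke yields a $P(4,3)$-stable decomposition $\mathbb{C}[V^3]^H = \mathbb{C}[V^3]^{D_8} \oplus (\mathbb{C}[V^3]^H)^{-\tau}$ which descends to the quotient, identifying $A^{\tau}$ with $\mathbb{C}[V^3]^{D_8}/(P(4,3)^+)\mathbb{C}[V^3]^{D_8}$. By the Graded Nakayama Lemma it then suffices to verify, for each decreasing multidegree $\alpha$, that the elements displayed in the new table project to a $\mathbb{C}$-basis of $A^{\tau} \cap A_{\alpha}$.

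Second, I would compute $A^{\tau}$ explicitly in each multidegree by feeding the $H$-monomial basis from Proposition~\ref{prop:cyclic} through the elementary reductions $x_i y_i \equiv 0$ and $y_i^4 \equiv -x_i^4$ modulo $\mathfrak{m}$. On a surviving basis monomial $x^a y^b$ (necessarily with $a_i b_i = 0$ and $b_i \le 3$) the rule $\tau(x^a y^b) = y^a x^b$, after folding every $y_i^4$ back to $-x_i^4$, yields $(-1)^{\#\{i : a_i = 4\}}$ times another basis monomial of the same multidegree. Running this rule through each row of Proposition~\ref{prop:cyclic} one finds that at multidegrees $(4,0,0)$ and $(4,4,4)$ the exponent count is odd, so $\tau$ acts by $-1$ and $A^{\tau}\cap A_{\alpha} = 0$ --- this is why those two rows are absent from the table of Proposition~\ref{prop:Hironaka n=4, m=3} --- whereas at every other row the dimension of $A^{\tau}\cap A_{\alpha}$ matches precisely the number of generators listed.

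To conclude, for each row I would check that the displayed product of $p_{\beta}$'s and $q_{\gamma}$'s (manifestly $D_8$-invariant) projects to a non-zero vector of $A^{\tau}\cap A_{\alpha}$, and that together with its companions it spans that space. Expanding each such product and discarding every term divisible by some $x_i y_i$ or by some $y_i^4$ produces, up to a non-zero scalar, precisely the $\tau$-symmetrized basis monomials of $A^{\tau}\cap A_{\alpha}$ found in the previous step; a typical sample is $q_{1,1,0}q_{1,0,1} \equiv \tfrac{1}{4}(x_1^2 y_2 y_3 + y_1^2 x_2 x_3) \pmod{\mathfrak{m}}$ at multidegree $(2,1,1)$. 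Linear independence of the resulting leading terms, together with the dimension match from step two, establishes the Hironaka decomposition.

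The main obstacle is purely bookkeeping: verifying the reductions and the linear independence of the leading terms across all fourteen non-trivial rows. The only slightly delicate point is the parity rule that forces the rows $(4,0,0)$ and $(4,4,4)$ to vanish, which one should flag in advance to avoid confusion when comparing the tables of Propositions~\ref{prop:cyclic} and~\ref{prop:Hironaka n=4, m=3}.
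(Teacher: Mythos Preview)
Your proposal is correct and follows essentially the same approach as the paper's proof: both pass from the cyclic group $H$ to $D_8$ by splitting the $P(4,3)$-module $\mathbb{C}[V^3]^H$ into $\pm 1$ eigenspaces for the involution $\tau$ (using $y_i^4\equiv -x_i^4$ modulo the Hilbert ideal), identify the $+1$ eigenspace with the $D_8$-invariants, and then check that the listed products of $p$'s and $q$'s reduce to non-zero scalar multiples of the symmetrized basis monomials. The paper carries this out by explicitly listing the sets $C_+$ and $C_-$ rather than invoking Maschke, but the content is the same; one small slip in your write-up is that in the final paragraph you should fold $y_i^4$ to $-x_i^4$ rather than ``discard'' it, as you correctly stated earlier.
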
 

\begin{proof} 
Following the notation of Section~\ref{sec:secondary S-generating}, $(P(4,3)^+)$ stands for the ideal in $\mathbb{C}[V^3]^{D_8}$ generated by the elements \eqref{eq:primary}. 
The element $\left(\begin{array}{cc} 0 & 1 \\ 1 & 0 \end{array}\right)$ of $D_8$ permutes up to sign the cosets modulo $(P(4,3)^+)$ of the monomials in the table in Proposition~\ref{prop:cyclic} (note that $x^4$ is congruent to $-y^4$  modulo $(P(4,3)^+)$). It follows by Proposition~\ref{prop:cyclic} that a $D_8$-invariant direct complement of $(P(4,3)^+)$ in the sum of the homogeneous components of $\mathbb{C}[V^3]^H$ with decreasing multidegree has the basis 
$C_+\cup C_-$, where 
\begin{align*}C_+=\{1, x_1y_2+y_1x_2, 
x_1^2x_2x_3+y_1^2y_2y_3, 
x_1^2y_2y_3+y_1^2x_2x_3, 
x_1^2x_2^2+y_1^2y_2^2, 
x_1^2y_2^2+y_1^2x_2^2, 
\\ x_1^3x_2+y_1^3y_2, 
x_1^3y_2^2y_3+y_1^3x_2^2x_3, 
x_1^3x_2^2y_3+y_1^3y_2^2x_3, 
 x_1^3y_2^3+y_1^3x_2^3, 
 x_1^4x_2y_3-x_1^4y_2x_3, 
 \\ 
 x_1^3x_2^3x_3^2+y_1^3y_2^3y_3^2, 
 x_1^3x_2^3y_3^2+y_1^3y_2^3x_3^2, 
 x_1^4x_2^2x_3^2-x_1^4y_2^2y_3^2, 
 x_1^4x_2^2y_3^2-x_1^4y_2^2x_3^2 , 
 \\ 
 x_1^4x_2^3x_3-x_1^4y_2^3y_3, 
x_1^4x_2^4, 
x_1^4x_2^3y_3^3-x_1^4y_2^3x_3^3 \}
\end{align*}
and 
\begin{align*} 
C_-=\{x_1y_2-y_1x_2, 
x_1^2x_2x_3-y_1^2y_2y_3, 
x_1^2y_2y_3-y_1^2x_2x_3, 
x_1^2x_2^2-y_1^2y_2^2, 
x_1^2y_2^2-y_1^2x_2^2, 
\\ x_1^3x_2-y_1^3y_2, 
x_1^3y_2^2y_3-y_1^3x_2^2x_3, 
x_1^3x_2^2y_3-y_1^3y_2^2x_3, 
 x_1^3y_2^3-y_1^3x_2^3, 
 x_1^4x_2y_3+x_1^4y_2x_3, 
 \\ 
 x_1^3x_2^3x_3^2-y_1^3y_2^3y_3^2, 
 x_1^3x_2^3y_3^2-y_1^3y_2^3x_3^2, 
 x_1^4x_2^2x_3^2+x_1^4y_2^2y_3^2, 
 x_1^4x_2^2y_3^2+x_1^4y_2^2x_3^2 , 
 \\ 
 x_1^4x_2^3x_3+x_1^4y_2^3y_3, 
x_1^4x_2^3y_3^3+x_1^4y_2^3x_3^3 , 
x_1^4, 
x_1^4x_2^4x_3^4\}. 
\end{align*}
The elements in $C_+$ are $D_8$-invariant, whereas the elements in $C_-$ span a $1$-dimensional $D_8$-invariant subspace on which $D_8$ acts via the determinant representation. It follows that $C_+$ is a system of secondary S-generators for 
$\mathbb{C}[V^3]^{D_8}$. It is easy to see that modulo the ideal 
$(P(4,3)^+)$ the elements listed in the table in the statement of our proposition agree with non-zero scalar multiples of the elements in $C_+$. 
\end{proof} 

\subsection{Relations for $n=4$ and $m=3$} 

\begin{proposition}\label{prop:n=4, m=3} 
The $GL_3(\mathbb{C})$-ideal $\ker(\varphi(4,3))$ is minimally generated by 
$\mathcal{R}_{2,2,2}$, $\mathcal{R}(4)_{4,2}$, $\mathcal{R}(4)_{6,2}$, and $\mathcal{R}(4)_{4,4}$. 
\end{proposition}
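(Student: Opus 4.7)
The plan is to combine Theorem~\ref{thm:m=3 sufficient}, which reduces the problem to elements of degree at most $2n+2=10$ in $\ker(\varphi(4,3))$, with Lemma~\ref{lemma:obvious}, which lets me work in $\bar{\mathcal{D}}(3)\otimes \mathcal{E}(4,3)$ modulo the $GL_3(\mathbb{C})$-submodule generated by $\mathcal{R}_{2,2,2}$. After this reduction it suffices to show that the images of $\mathcal{R}(4)_{4,2}, \mathcal{R}(4)_{6,2}, \mathcal{R}(4)_{4,4}$ in $\bar{\mathcal{D}}(3)\otimes \mathcal{E}(4,3)$ generate $\ker(\bar\varphi(4,3))$ as a $GL$-ideal, and that none of the four listed generators is redundant. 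The entire argument is a finite $GL_3(\mathbb{C})$-character computation in degrees at most $10$.

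First I would compute the $GL_3(\mathbb{C})$-character (equivalently, the multidegree Hilbert series in variables $t_1,t_2,t_3$) of $\mathbb{C}[V^3]^{D_8}$ up to total degree $10$ using Proposition~\ref{prop:Hironaka n=4, m=3}: the primary generators contribute the denominator $\prod_{i=1}^3(1-t_i^2)(1-t_i^4)$, while the thirteen $S_3$-orbits of secondary S-generators contribute the numerator. Decomposing this rational function into Schur polynomials $s_\lambda(t_1,t_2,t_3)$ yields the $GL_3(\mathbb{C})$-isotypic structure of each $\mathbb{C}[V^3]^{D_8}_d$. In parallel, Corollary~\ref{cor:GL-structure of F} together with Proposition~\ref{prop:well known GL-module isomorphisms} gives an explicit finite list of Schur summands for $(\bar{\mathcal{D}}(3)\otimes \mathcal{E}(4,3))_{\le 10}$; subtracting the two characters yields the $GL_3(\mathbb{C})$-isotypic structure of $\ker(\bar\varphi(4,3))_d$ for each $d\le 10$.

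Second I would compute the character of the $GL_3(\mathbb{C})$-ideal $\mathcal{J} \subset \bar{\mathcal{D}}(3)\otimes \mathcal{E}(4,3)$ generated by the images of the three $\mathcal{R}(4)_{\lambda}$, also up to degree $10$. By Theorem~\ref{thm:m=2}(i) each generator sits in an irreducible $GL_3(\mathbb{C})$-submodule of type $S^{(4,2)}(\mathbb{C}^3)$, $S^{(6,2)}(\mathbb{C}^3)$, $S^{(4,4)}(\mathbb{C}^3)$ respectively, and Pieri's rule decomposes the tensor product of each such isotype with the $S^{2\lambda}(\mathbb{C}^3)$-summands appearing in $(\bar{\mathcal{D}}(3)\otimes \mathcal{E}(4,3))_{d-6}$ and $(\bar{\mathcal{D}}(3)\otimes \mathcal{E}(4,3))_{d-8}$. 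Summing these contributions gives the character of $\mathcal{J}_d$, and the proof of generation concludes by checking that $\mathcal{J}$ and $\ker(\bar\varphi(4,3))$ have equal character in each degree up to $10$.

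Minimality then follows from the observation that the four generators span pairwise distinct $GL_3(\mathbb{C})$-isotypes $(2,2,2),(4,2),(6,2),(4,4)$: $\mathcal{R}_{2,2,2}$ cannot be generated by the others because its image in $\bar{\mathcal{D}}(3)\otimes \mathcal{E}(4,3)$ is zero while theirs is nonzero, and for each of $\mathcal{R}(4)_{4,2},\mathcal{R}(4)_{6,2},\mathcal{R}(4)_{4,4}$ the character computation shows that the multiplicity of its own isotype in $\ker(\bar\varphi(4,3))_d$ strictly exceeds the multiplicity of that isotype in the $GL$-ideal generated by the remaining two. I expect the main obstacle to be the degree-$10$ bookkeeping: many Pieri expansions of $S^{(4,2)}\otimes S^{2\lambda}$ and of $S^{(6,2)}, S^{(4,4)}$ tensored with degree-$2$ summands of $\bar{\mathcal{D}}(3)$ share common isotypes such as $(6,4),(6,2,2),(4,4,2)$, so one has to keep careful track of which copies of each isotype come from $\mathcal{J}$ in order to secure the character match.
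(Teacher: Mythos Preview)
Your second step contains a genuine gap. Summing the Pieri expansions of $S^{(4,2)}(\mathbb{C}^3)\otimes(\bar{\mathcal{D}}(3)\otimes\mathcal{E}(4,3))_{d-6}$ and of $(S^{(6,2)}(\mathbb{C}^3)\oplus S^{(4,4)}(\mathbb{C}^3))\otimes(\bar{\mathcal{D}}(3)\otimes\mathcal{E}(4,3))_{d-8}$ does \emph{not} give the character of $\mathcal{J}_d$; it gives the character of the \emph{domain} of the multiplication map whose image is $\mathcal{J}_d$. That map is not injective in degree $10$: for instance, multiplying $\langle\mathcal{R}(4)_{4,2}\rangle_{GL_3(\mathbb{C})}$ by the degree-$4$ piece already produces two copies each of $S^{(8,2)},S^{(7,3)},S^{(6,4)},\dots$, and adding the contributions of the degree-$8$ generators times $S^{(2)}$ yields a character strictly larger than the character of $\ker(\bar\varphi(4,3))_{10}$ computed in \eqref{eq:GL-structure of ideal of D4-relations}. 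So your Pieri computation furnishes only an \emph{upper} bound on the character of $\mathcal{J}_d$, whereas proving $\mathcal{J}_d=\ker(\bar\varphi(4,3))_d$ from the containment $\mathcal{J}\subseteq\ker(\bar\varphi(4,3))$ requires a \emph{lower} bound. To close the gap along your line you would have to exhibit, for every irreducible summand in $\ker(\bar\varphi(4,3))_{\le 10}$, an explicit highest weight vector lying in $\mathcal{J}$; the paper remarks at the end of Section~7 that this alternative is feasible but more laborious.

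The paper's own proof sidesteps this surjectivity issue. It lifts the secondary S-generators of Proposition~\ref{prop:Hironaka n=4, m=3} to a set $\mathcal{T}\subset\mathcal{F}(4,3)$, lets $\mathcal{H}$ be the ideal generated by the primaries, and verifies multidegree by multidegree that every monomial in $\mathcal{F}(4,3)_{\le 10}$ is congruent modulo $\mathcal{H}+\mathcal{K}$ to a linear combination of elements of $\mathcal{T}$, where $\mathcal{K}$ is the $GL$-ideal generated by the four listed relations. Lemma~\ref{lemma:furnish} then converts this spanning statement into $\mathcal{K}_{\le 10}=\ker(\varphi(4,3))_{\le 10}$. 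The congruences are produced by applying the Lie algebra operators $E_{i,j}$ to the generators and tracking a small list of monomials; no isotypic multiplicities for $\mathcal{J}$ are ever needed.
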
 

\begin{remark}
We have $\dim(S^{(4,2)}(\mathbb{C}^3))=27$, 
$\dim(S^{(6,2)}(\mathbb{C}^3))=60$, 
$\dim(S^{(4,4)}(\mathbb{C}^3))=15$, 
and $\dim(S^{(2,2,2)}(\mathbb{C}^3))=1$. 
Therefore Proposition~\ref{prop:n=4, m=3} implies that a minimal homogeneous generating system of the ideal $\ker(\varphi(4,3))$ consists of $103$ elements. 
For comparison we mention that a minimal homogeneous generating system of 
$\ker(\varphi(4,2))$ consists of $3+5+1=9$ elements. 
\end{remark}

The logic of the proof of Proposition~\ref{prop:n=4, m=3} is furnished by the following general lemma: 

\begin{lemma}\label{lemma:furnish} 
Let $\varphi:\mathcal{F}\to R$ be a surjective homomorphism of graded $\mathbb{C}$-algebras, where $R$ is a connected graded Cohen-Macaulay algebra (cf. \cite[Section 2.3]{sturmfels}), so there exist 
homogeneous elements $h_1,\dots,h_k$ and $t_1,\dots,t_l$ in $\mathcal{F}$ with the following properties: 
\begin{enumerate} 
\item $\varphi(h_1),\dots,\varphi(h_k)$ is a homogeneous system of parameters in $R$. 
\item $R=P\varphi(t_1)\oplus\cdots\oplus P\varphi(t_l)$ where $P=\mathbb{C}[\varphi(h_1),\dots,\varphi(h_l)]$. 
\end{enumerate} 
Let $\mathcal{K}$ be a homogeneous ideal in $\mathcal{F}$ contained in $\ker(\varphi)$, and assume that  for some $d\in \mathbb{N}$ we have 
\[\mathcal{F}_{\le d}=\mathrm{Span}_{\mathbb{C}}\mathcal{T}_{\le d}+\mathcal{H}_{\le d}+\mathcal{K}_{\le d}\] 
where $\mathcal{H}$ is the ideal in $\mathcal{F}$ generated by $h_1,\dots,h_k$, 
$\mathcal{T}:=\{t_1,\dots,t_l\}$, 
$d\in \mathbb{N}$ and for a subset $A$ of homogeneous elements (respectively graded subspace) in $\mathcal{F}$ we write $A_{\le d}$ for the set of elements (respectively sum of homogeneous components) of $A$ with degree $\le d$. 

Then we have $\mathcal{K}_{\le d}=\ker(\varphi)_{\le d}$. 
\end{lemma}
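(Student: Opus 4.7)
The containment $\mathcal{K}_{\le d}\subseteq\ker(\varphi)_{\le d}$ is immediate from $\mathcal{K}\subseteq\ker(\varphi)$, so the plan focuses on the reverse containment. The strategy is to first establish an auxiliary normal-form statement: for every $e\le d$, every element of $\mathcal{F}_e$ is congruent modulo $\mathcal{K}$ to a $\mathbb{C}$-linear combination of monomials $t_j\,h_1^{\alpha_1}\cdots h_k^{\alpha_k}$ of total degree $e$.

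I would prove this normal form by induction on $e$. The base case $e=0$ uses $\mathcal{F}_0=\mathbb{C}$ together with the fact that $\mathcal{T}$ must contain a degree-zero element, since $R_0=\mathbb{C}$ is spanned over $P$ by some $\varphi(t_j)$. For the inductive step, I apply the hypothesis of the lemma to $f\in\mathcal{F}_e\subseteq\mathcal{F}_{\le d}$ and take the degree-$e$ component to write $f=\sum_{\deg t_i=e}c_i t_i+h_e+k_e$ with $c_i\in\mathbb{C}$, $h_e\in\mathcal{H}_e$, $k_e\in\mathcal{K}$. Writing $h_e=\sum_i a_i h_i$ with $\deg a_i=e-\deg h_i<e$, the inductive hypothesis rewrites each $a_i$ modulo $\mathcal{K}$ as a combination of the target monomials; multiplication by $h_i$, using that $\mathcal{K}$ is an ideal, then expresses $h_e$ in the required form, and absorbing the $\sum c_i t_i$ summand as the $\alpha=0$ case completes the induction.

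With the normal form in hand, I take $f\in\ker(\varphi)_{\le d}$ and reduce to the homogeneous case, which is legitimate because $\varphi$ is a graded homomorphism. The claim writes $f=\sum_{j,\alpha}c_{j,\alpha}\,t_j\,h^\alpha+k$ with $k\in\mathcal{K}$ and $c_{j,\alpha}\in\mathbb{C}$. Applying $\varphi$ yields
\[\sum_{j,\alpha}c_{j,\alpha}\,\varphi(t_j)\,\varphi(h_1)^{\alpha_1}\cdots\varphi(h_k)^{\alpha_k}=0\]
in $R$. Since $\varphi(h_1),\dots,\varphi(h_k)$ are algebraically independent in $R$ (a homogeneous system of parameters in a Cohen-Macaulay ring), $P$ is a polynomial ring on them, and by hypothesis (2) the $\varphi(t_i)$ form a free $P$-module basis of $R$. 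Consequently $\{\varphi(t_j)\varphi(h)^\alpha\}$ is $\mathbb{C}$-linearly independent in $R$, which forces $c_{j,\alpha}=0$ for every $j$ and $\alpha$, and hence $f=k\in\mathcal{K}$.

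The main obstacle I anticipate is resisting a one-step argument. The naive attempt would use the hypothesis once to write $f=\sum c_i t_i+h_e+k_e$ and then try to conclude that $h_e$, being killed by $\varphi$, already lies in $\mathcal{K}$; but $h_e$ is only guaranteed to lie in $\mathcal{H}\cap\ker(\varphi)$, an intersection over which the hypothesis offers no direct control. The iterative rewriting encoded in the induction is precisely what unfolds elements of $\mathcal{H}$ until every $h_i$ factor is paired with a $t_j$, producing a Hironaka-style normal form modulo $\mathcal{K}$ against which the freeness of $R$ over $P$ makes the vanishing of coefficients automatic.
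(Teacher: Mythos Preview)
Your proposal is correct and follows essentially the same approach as the paper: the paper's proof also proceeds by an induction on the degree to establish the equality $\mathcal{F}_{\le d}=\mathcal{K}_{\le d}+\sum_j\mathcal{P}_{\le d-\deg(t_j)}t_j$ with $\mathcal{P}=\mathbb{C}[h_1,\dots,h_k]$, and then concludes by observing that $\varphi$ is injective on $\sum_j\mathcal{P}t_j$. Your write-up simply supplies the details of that induction and of the injectivity argument that the paper leaves to the reader.
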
 

\begin{proof} 
One can show by an induction on the degree that our assumptions imply the equality 
\[\mathcal{F}_{\le d}=\mathcal{K}_{\le d}+\mathcal{P}_{\le d-\deg(t_1)}t_1+\cdots+ \mathcal{P}_{\le d-\deg(t_l)}t_l,\] 
where $\mathcal{P}=\mathbb{C}[h_1,\dots,h_k]$. Moreover, the restriction of $\varphi$ to 
$\mathcal{P}$ and to $\mathcal{P}t_1+\cdots+ \mathcal{P}t_l$ is injective. This clearly implies the desired equality $\mathcal{K}_{\le d}=\ker(\varphi)_{\le d}$. 
\end{proof}

\begin{proofof}{Proposition~\ref{prop:n=4, m=3}} 
For a decreasing multidegree $\alpha=(\alpha_1,\alpha_2,\alpha_3)$ denote by $\mathcal{T}_{\alpha}$ the set of products of the variables $\pi_{i_1,i_2,i_3}$, $\rho_{j_1,j_2,j_3}$ having  multidegree $\alpha$ that correspond to the elements of multidegree $\alpha$ in the secondary S-generating system of $\mathbb{C}[V^3]^{D_8}$ given in Proposition~\ref{prop:secondary S-generating system}. For example, $\mathcal{T}_{1,1,1}=\emptyset$ and 
$\mathcal{T}_{4,2,2}=\{\pi_{3,1,0}\rho_{1,0,1}\rho_{0,1,1},\rho_{1,1,0}^2\rho_{1,0,1}^2\}$. 
Denote by $\mathcal{K}$ the $GL$-ideal in $\mathcal{F}_3$ generated by  
$\mathcal{R}_{2,2,2}$, $\mathcal{R}(4)_{4,2}$, $\mathcal{R}(4)_{6,2}$, and $\mathcal{R}(4)_{4,4}$, and write $\mathcal{H}$ for the ideal in $\mathcal{F}(4,3)$ generated by 
$\pi_{4,0,0},\pi_{0,4,0},\pi_{0,0,4},\rho_{2,0,0},\rho_{0,2,0},\rho_{0,0,2}$. 
The subspaces $\mathcal{K}$ and $\mathcal{H}$ are spanned by multihomogeneous elements of $\mathcal{F}(4,3)$, and we shall write $\mathcal{K}_{\alpha}$, $\mathcal{H}_{\alpha}$, $\mathcal{F}(4,3)_{\alpha}$ for their components of multidegree $\alpha$. 
We shall show that 
\begin{equation}\label{eq:T_alpha+K_alpha+H_alpha} 
\mathrm{Span}_{\mathbb{C}}\{\mathcal{T}_{\alpha}\}+\mathcal{K}_{\alpha}+\mathcal{H}_{\alpha}=\mathcal{F}(4,3)_{\alpha} 
\text{ for all decreasing }\alpha\in\mathbb{N}_0^3 \text{ with }\alpha_1+\alpha_2+\alpha_3\le 10.
\end{equation} 
Note that $\mathcal{H}$ is not a $GL$-ideal in $\mathcal{F}(4,3)$, however, it is preserved by the subgroup $S_3$ of $GL_3(\mathbb{C})$. For $s\in S_3$ we have 
$s\cdot \mathcal{H}_{\alpha}=\mathcal{H}_{s\cdot\alpha}$ and 
$s\cdot \mathcal{K}_{\alpha}=\mathcal{K}_{s\cdot\alpha}$. 
Thus \eqref{eq:T_alpha+K_alpha+H_alpha} implies 
$\mathcal{F}_{s\cdot \alpha}=\mathrm{Span}_{\mathbb{C}}\{s\cdot \mathcal{T}_{\alpha}\}
+\mathcal{H}_{s\cdot\alpha}+\mathcal{K}_{s\cdot\alpha}$. 
Set 
\[\mathcal{T}:=\bigsqcup_{\alpha\in \mathbb{N}_0^3,\ \alpha_1\ge\alpha_2\ge\alpha_3}
\bigsqcup_{s\in S_3/\mathrm{Stab}_{S_3}(\alpha)}s\cdot\mathcal{T}_{\alpha}.\] 
Then the generators of $\mathcal{H}$ are mapped to a homogeneous system of parameters in $R:=\mathbb{C}[V^3]^{D_8}$ and $\mathcal{T}$ is mapped by $\varphi(4,3)$ to a system of secondary generators of $R$ (see Definition~\ref{def:secondary S-generating system}).  
 Given \eqref{eq:T_alpha+K_alpha+H_alpha}  the above considerations show that the assumptions of Lemma~\ref{lemma:furnish} hold for $\mathcal{F}=\mathcal{F}(4,3)$,  
 $\varphi=\varphi(4,3)$ and $d=10$, and therefore by Lemma~\ref{lemma:furnish} we conclude 
 that $\mathcal{K}_{\le 10}=\ker(\varphi(4,3))_{\le 10}$. 
 Now $\ker(\varphi(4,3))$ is generated by its elements of degree at most $10$ by 
 Proposition~\ref{prop:2n+2}, implying the equality $\mathcal{K}=\ker(\varphi(4,3))$. 
 
 It remains to prove \eqref{eq:T_alpha+K_alpha+H_alpha}. For multidegrees $\alpha$ with $\alpha_3=0$ this was done in the proof of Theorem~\ref{thm:m=2}. Note also that  
 $\mathcal{F}_{\alpha}=\{0\}$ if $\alpha_1+\alpha_2+\alpha_3$ is odd. 
 
 Denote by $\mathcal{M}_{\alpha}$ the products of multidegree $\alpha$ in 
 $\{\pi_{\beta},\rho_{\gamma}\mid \beta_i<4,\gamma_j<2\}$ (the generators of $\mathcal{F}(4,3)$ different from $\pi_{4,0,0},\pi_{0,4,0},\pi_{0,0,4},\rho_{2,0,0},\rho_{0,2,0},\rho_{0,0,2}$).  
 In the first table below we collect for all decreasing $\alpha\in \mathbb{N}_0^3$ with 
 $\alpha_3>0$ and $\sum_{i=1}^3\alpha_i\le 6$ even the elements of 
 $\mathcal{M}_{\alpha}$.  
 \[\begin{array}{c||c|c}
\alpha & \mathcal{T}_{\alpha} & \mathcal{M}_{\alpha}\setminus \mathcal{T}_{\alpha} \\
\hline \hline 
(2,1,1) & \pi_{2,1,1},\ \rho_{1,1,0}\rho_{1,0,1} & 
\\ \hline
(4,1,1) & \pi_{3,1,0}\rho_{1,0,1} &  \pi_{3,0,1}\rho_{1,1,0} 
\\ \hline 
(3,2,1) & \rho_{1,1,0}^2\rho_{1,0,1},\ \pi_{3,1,0}\rho_{0,1,1} 
& \pi_{2,1,1}\rho_{1,1,0},\ \pi_{2,2,0}\rho_{1,0,1} 
\\ \hline 
(2,2,2) & & \rho_{1,1,0}\rho_{1,0,1}\rho_{0,1,1},\ \pi_{2,1,1}\rho_{0,1,1},\ 
\pi_{1,2,1}\rho_{1,0,1},\ \pi_{1,1,2}\rho_{1,1,0}
\\ \hline 
\end{array} \]

 Now going multidegree by multidegree we shall show that the products 
 $\mathcal{M}_{\alpha}\setminus\mathcal{T}_{\alpha}$ are congruent modulo 
 $\mathcal{H}+\mathcal{K}$ to a $\mathbb{C}$-linear combination of the elements in $\mathcal{T}_{\alpha}$. 
 Denote by $\mathcal{I}$ the 
$GL$-ideal  of $\mathcal{F}(4,3)$ generated by $\mathcal{R}_{2,2,2}$ and 
$\mathcal{R}(4)_{4,2}$, and we shall write $a\equiv_{\mathcal{I}}b$ 
for some $a,b\in \mathcal{F}(4,3)$ if $a-b\in \mathcal{H}+\mathcal{I}$. 
 We shall write 
 $a\equiv b$ for $a,b\in\mathcal{F}(4,3)$ if $a-b\in\mathcal{H}+\mathcal{K}$. 
 Note that $\mathcal{I}\subseteq \mathcal{K}$ and therefore 
 $a\equiv_{\mathcal{I}}b$ implies $a\equiv b$. 
 
 There is nothing to do for the multidegree $(2,1,1)$. Taking into account Theorem~\ref{thm:m=2} this shows also that for $d<6$ we have $\ker(\varphi(4,3))_d=\{0\}$.  
 
$\mathbf{(4,1,1)}$: The relation \eqref{eq:R(n,2)} in the special case $n=4$, $m=3$ is 
\begin{equation}\label{eq:R_{4,2,0}}
\mathcal{R}(4)_{4,2}:=\pi_{4,0,0}\rho_{0,2,0}-2\pi_{3,1,0}\rho_{1,1,0}+\pi_{2,2,0}\rho_{2,0,0}. 
\end{equation}
Applying $E_{3,2}\in \mathfrak{gl}_3(\mathbb{C})$ to $\mathcal{R}(4)_{4,2}$ we get the following element of $\langle \mathcal{R}(4)_{4,2}\rangle_{GL_3(\mathbb{C})}$: 
\begin{equation}\label{eq:R__{4,2}^{4,1,1}}
\mathcal{R}(4)_{4,2}^{4,1,1}:=\frac 12E_{3,2}.\mathcal{R}_{4,2,0}=
\pi_{4,0,0}\rho_{0,1,1}-\pi_{3,1,0}\rho_{1,0,1}-\pi_{3,0,1}\rho_{1,1,0}+\pi_{2,1,1}\rho_{2,0,0}
\end{equation}
This relation implies that 
\begin{equation}\label{eq:p_301q_110} 
\pi_{3,0,1}\rho_{1,1,0}\equiv_{\mathcal{I}} -\pi_{3,1,0}\rho_{1,0,1}.  
\end{equation} 

$\mathbf{(3,2,1)}$: Consider the following elements of $\langle \mathcal{R}(4)_{4,2}\rangle_{GL_3(\mathbb{C})}$: 

\begin{align}\label{eq:R_{4,2}^{3,2,1}(1)}
\mathcal{R}(4)_{4,2}^{3,2,1}(1):=\frac 12E_{3,1}.\mathcal{R}(4)_{4,2}=
-\pi_{3,1,0}\rho_{0,1,1}-3\pi_{2,1,1}\rho_{1,1,0}
+\pi_{2,2,0}\rho_{1,0,1} 
\\ \notag +2\pi_{3,0,1}\rho_{0,2,0}+\pi_{1,2,1}\rho_{2,0,0}
\end{align}

\begin{align}\label{eq:R_{4,2}^{3,2,1}(2)}
\mathcal{R}(4)_{4,2}^{3,2,1}(2):=E_{2,1}.\mathcal{R}(4)_{4,2}^{4,1,1}=
& 3\pi_{3,1,0}\rho_{0,1,1}-\pi_{2,1,1}\rho_{1,1,0} 
-3\pi_{2,2,0}\rho_{1,0,1}
\\ \notag &+2\pi_{1,2,1}\rho_{2,0,0}-\pi_{3,0,1}\rho_{0,2,0}
\end{align} 

Now $3\mathcal{R}(4)_{4,2}^{3,2,1}(1)+\mathcal{R}(4)_{4,2}^{3,2,1}(2)\in\mathcal{K}$ implies that 
\begin{equation}\label{eq:p_211q_110} 
\pi_{2,1,1}\rho_{1,1,0}\equiv_{\mathcal{I}} 0
\end{equation} 
and \eqref{eq:R_{4,2}^{3,2,1}(1)} and \eqref{eq:p_211q_110} imply 
\begin{equation}\label{eq:p_220q_101} 
\pi_{2,2,0}\rho_{1,0,1}\equiv_{\mathcal{I}} \pi_{3,1,0}\rho_{0,1,1}. 
\end{equation} 

$\mathbf{(2,2,2)}$: $\mathcal{R}_{2,2,2}\in\mathcal{I}$ implies 
\begin{equation}\label{eq:q_110q_101q_011} 
\rho_{1,1,0}\rho_{1,0,1}\rho_{0,1,1}\equiv_{\mathcal{I}} 0. 
\end{equation} 
Set $U:=\langle \mathcal{R}(4)_{4,2}\rangle_{GL_3(\mathbb{C})}$. Then 
$\dim U_{2,2,2}=3$ (since there are $3$ semistandard tableaux of shape $(4,2)$ and content $1,1,2,2,3,3$).   
The table above shows that $|\mathcal{M}_{2,2,2}\setminus\mathcal{T}_{2,2,2}|=4$, hence by Lemma~\ref{lemma:furnish} we conclude that 
$\dim \ker(\varphi(4,3))_{2,2,2}=4$. It follows that 
\[\ker(\varphi(4,3))_{2,2,2}=\mathbb{C} \mathcal{R}_{2,2,2}\oplus U_{2,2,2}.\] 
Consequently, $\ker(\varphi(4,3))_{2,2,2}=\mathcal{I}_{2,2,2}$,  and  
\begin{equation}\label{eq:p_211q_011}
\pi_{2,1,1}\rho_{0,1,1}\equiv_{\mathcal{I}} 0, 
\quad \pi_{1,2,1}\rho_{1,0,1}\equiv_{\mathcal{I}} 0, 
\quad  \pi_{1,1,2}\rho_{1,1,0}\equiv_{\mathcal{I}} 0. 
\end{equation} 

\bigskip
The calculations so far show that 
\[\ker(\varphi(4,3))_6=\mathbb{C} \mathcal{R}_{2,2,2}\oplus \langle \mathcal{R}(4)_{4,2}\rangle_{GL_3(\mathbb{C})}.\] 
Moreover, for later reference we mention some consequences of the relations of degree $6$.  
The relations \eqref{eq:R_{n,2}^(j)} in the special case $n=4$ give 
that 
\[\pi_{3,1,0}\rho_{1,1,0}\equiv_{\mathcal{I}} 0, \quad   \pi_{2,2,0}\rho_{1,1,0}\equiv_{\mathcal{I}} 0.\] 
Denote by $A$ the union of the $S_3$-orbits of 
$\pi_{2,1,1}\rho_{1,1,0}$, $\pi_{2,1,1}\rho_{0,1,1}$, $\rho_{1,1,0}\rho_{1,0,1}\rho_{0,1,1}$, 
 $\pi_{3,1,0}\rho_{1,1,0}$ and  $\pi_{2,2,0}\rho_{1,1,0}$. 
Each element of $A$ 
is congruent to zero modulo $\mathcal{H}+\mathcal{I}$ by 
\eqref{eq:p_211q_110}, \eqref{eq:q_110q_101q_011}, \eqref{eq:p_211q_011} 
(and taking into account that the ideal $\mathcal{H}+\mathcal{I}$ is preserved by the action 
of $S_3$ on $\mathcal{F}(4,3)$). 
Denote by $B$ the $S_3$-orbit of  $\pi_{2,2,0}\rho_{1,0,1}$. 
By \eqref{eq:p_220q_101} and as $\mathcal{I}+\mathcal{H}$ is $S_3$-stable, we conclude that each element 
of $B$ is congruent modulo $\mathcal{H}+\mathcal{I}$ to an element of the $S_3$-orbit of 
$\pi_{3,1,0}\rho_{0,1,1}$.  
Set $C:=\{\pi_{3,0,1}\rho_{1,1,0}\}$,  
by \eqref{eq:p_301q_110} the  element 
of $C$ is congruent modulo $\mathcal{H}+\mathcal{I}$ to  
$-\pi_{3,1,0}\rho_{0,1,1}$. 
Summarizing, we have that the factor space 
\begin{align}\label{eq:ABC} 
\mathcal{F}(4,3)/(\mathcal{I}+\mathcal{H}) \text{ is spanned over }\mathbb{C} 
\text{ by products of the variables }\pi_{i_1,i_2,i_3},\ \rho_{j_1,j_2,j_3} 
\\ \notag  \text{ not divisible by any element of } A\cup B\cup C.
\end{align}

Now we turn to the relations of degree $8$. Every non-zero term in any element of the 
$GL$-ideal $\mathcal{I}$ of $\mathcal{F}(4,3)$ generated by $\mathcal{R}_{2,2,2}$ and 
$\mathcal{R}(4)_{4,2}$ involves a variable $\rho_{i,j,k}$, and every non-zero term of any element of $\mathcal{H}$ involves a variable from $\{\pi_{4,0,0},\pi_{0,4,0},\pi_{0,0,4},
\rho_{2,0,0},\rho_{0,2,0},\rho_{0,0,2}\}$. Hence 
none of $\mathcal{R}(4)_{6,2}$ or $\mathcal{R}(4)_{4,4}$ is contained in $\mathcal{I}+\mathcal{H}$. 
Consequently, by basic principles about semisimple representations we have 
\begin{equation}\label{eq:disjoint}
(\mathcal{I}+\mathcal{H})\cap \langle \mathcal{R}(4)_{6,2},\  \mathcal{R}(4)_{4,4}\rangle_{GL_3(\mathbb{C})}=\{0\}. 
\end{equation}   
For the multidegrees $\alpha$ with $\sum \alpha_i=8$ we shall first show that each element 
of $\mathcal{M}_{\alpha}\setminus \mathcal{T}_{\alpha}$ involving a variable $\rho_{i,j,k}$ is congruent to an element of $\mathrm{Span}_{\mathbb{C}} \mathcal{T}_{\alpha}$ modulo 
$\mathcal{I}+\mathcal{H}$. 
In fact by \eqref{eq:ABC} it is sufficient to do it for the elements of the subset 
$\mathcal{U}_{\alpha}$ of $\mathcal{M}_{\alpha}\setminus \mathcal{T}_{\alpha}$ not divisible by any element of $A\cup B\cup C$. 
Denote by $\mathcal{N}_{\alpha}$ the set of  elements of 
$\mathcal{M}_{\alpha}\setminus \mathcal{T}_{\alpha}$  of the form 
$\pi_{i_1,i_2,i_3}\pi_{j_1,j_2,j_3}$. In the table below 
we collect for all decreasing $\alpha\in \mathbb{N}_0^3$ with 
 $\alpha_3>0$ and $\sum_{i=1}^3\alpha_i=8$ the elements of 
 $\mathcal{T}_{\alpha}$, $\mathcal{U}_{\alpha}$, $\mathcal{N}_{\alpha}$:  

\[\begin{array}{c||c|c|c}
\alpha & \mathcal{T}_{\alpha} & \mathcal{U}_{\alpha}\setminus \mathcal{N}_{\alpha}  &\mathcal{N}_{\alpha} 
\\ \hline \hline 
(6,1,1) & & &\pi_{3,1,0}\pi_{3,0,1} 
\\ \hline 
(5,2,1) &  & 
& \pi_{3,1,0}\pi_{2,1,1},\ \pi_{3,0,1}\pi_{2,2,0} 
\\ \hline 
(4,3,1) & \rho_{1,1,0}^3\rho_{1,0,1} & 
& 
\pi_{3,1,0}\pi_{1,2,1},\ \pi_{2,2,0}\pi_{2,1,1},\ \pi_{3,0,1}\pi_{1,3,0} 
\\ \hline 
(4,2,2) & \rho_{1,1,0}^2\rho_{1,0,1}^2,\ \pi_{3,1,0}\rho_{1,0,1}\rho_{0,1,1} & 
&  \pi_{3,1,0}\pi_{1,1,2},\ \pi_{3,0,1}\pi_{1,2,1}, 
\\ & & &  \pi_{2,2,0}\pi_{2,0,2},\ \pi_{2,1,1}^2
\\ \hline 
(3,3,2) &  \pi_{3,1,0}\rho_{0,1,1}^2,\ \pi_{2,1,1}\pi_{1,2,1} &
\pi_{1,3,0}\rho_{1,0,1}^2 
 & \pi_{3,1,0}\pi_{0,2,2},\ \pi_{3,0,1}\pi_{0,3,1},
 \\ & & &  \pi_{1,3,0}\pi_{2,0,2},\ 
\pi_{2,2,0}\pi_{1,1,2}  
\\ \hline 
\end{array} \]
We see that $\mathcal{U}_{\alpha}\neq\emptyset$ only for $\alpha=(3,3,2)$, 
and then 
$\mathcal{U}_{3,3,2}=\{\pi_{1,3,0}\rho_{1,0,1}^2\}$. 
Consider 
\begin{align}\label{eq:R_{4,2}^{3,3,0}}
\mathcal{R}(4)_{4,2}^{3,3,0}:=E_{2,1}.\mathcal{R}(4)_{4,2}^{4,2,0} 
=&-4\pi_{2,2,0}\rho_{1,1,0}
\\ \notag &+2\pi_{3,1,0}\rho_{0,2,0}+
2\pi_{1,3,0}\rho_{2,0,0}
\end{align}
and 
\begin{align}\label{eq:R_{4,2}^{2,3,1}}
\mathcal{R}(4)_{4,2}^{2,3,1}:=E_{3,1}.\mathcal{R}(4)_{4,2}^{3,3,0}
=-8\pi_{1,2,1}\rho_{1,1,0}-4\pi_{2,2,0}\rho_{0,1,1}+4\pi_{1,3,0}\rho_{1,0,1}
\\ \notag +6\pi_{2,1,1}\rho_{0,2,0}+2\pi_{0,3,1}\rho_{2,0,0} 
\\ \label{eq:bR_{4,2}^{2,3,1}} \equiv_{\mathcal{I}}-4\pi_{2,2,0}\rho_{0,1,1}+4\pi_{1,3,0}\rho_{1,0,1} 
\end{align} 
(the congruence \eqref{eq:bR_{4,2}^{2,3,1}} follows from \eqref{eq:p_211q_110} and the fact that $\mathcal{I}+\mathcal{H}$ is $S_3$-stable). 

The element $\rho_{1,0,1}\mathcal{R}(4)_{4,2}^{2,3,1}\in \mathcal{I}$ 
shows that 
\[\pi_{1,3,0}\rho_{1,0,1}^2\equiv_{\mathcal{I}} \pi_{2,2,0}\rho_{0,1,1}\rho_{1,0,1}
\equiv_{\mathcal{I}} \pi_{3,1,0}\rho_{0,1,1}^2\] 
(we used also \eqref{eq:p_220q_101}).  

At this stage by  \eqref{eq:disjoint} to prove that each element of $\mathcal{N}_{\alpha}$  is congruent to an element of  
 $\mathrm{Span}_{\mathbb{C}} \mathcal{T}_{\alpha}$ modulo $\mathcal{K}+\mathcal{H}$ it is sufficient to verify that 
 $|\mathcal{N}_{\alpha}|$ equals the dimension of the component of multidegree 
 $\alpha$ of $\langle \mathcal{R}(4)_{6,2}\rangle_{GL_3(\mathbb{C})}\oplus 
\langle \mathcal{R}(4)_{4,4}\rangle_{GL_3(\mathbb{C})}$. 
Recall that  $\langle \mathcal{R}(4)_{6,2}\rangle_{GL_3(\mathbb{C})}\cong S^{(4,2)}(\mathbb{C}^3)$ 
and $\langle \mathcal{R}(4)_{4,4}\rangle_{GL_3(\mathbb{C})}\cong S^{(4,4)}(\mathbb{C}^3)$ as $GL_3(\mathbb{C})$-modules, and under these isomorphisms the multihomogeneous component of multidegree $\alpha$ corresponds to the weight subspace with weight $\alpha$.   
The dimension of $S^{\lambda}(\mathbb{C}^3)_{\alpha}$ (the weight subspace with weight $\alpha$) equals the number of semi-standard tableaux of shape $\lambda$ and content $\alpha$. The dimensions relevant to us are given in the following table:  
\[\begin{array}{c||c|c|c|c|c} 
\alpha & (6,1,1) & (5,2,1)& (4,3,1) & (4,2,2) & (3,3,2) 
 \\ \hline \hline 
\dim(S^{(6,2)}(\mathbb{C}^3)_{\alpha}) & 1 & 2 & 2 & 3 & 3 
\\ \hline
\dim(S^{(4,4)}(\mathbb{C}^3)_{\alpha}) & 0 & 0 & 1 & 1 & 1
\end{array}\] 
In the above table the sum of the dimensions in the column of $\alpha$ agrees with 
$|\mathcal{N}_{\alpha}|$, hence we showed that  
\[\mathcal{K}_{\le 8}=
\langle \mathcal{R}(4)_{6,2}\rangle_{GL_3(\mathbb{C})}
\oplus \langle \mathcal{R}(4)_{4,4}\rangle_{GL_3(\mathbb{C})}
\oplus \mathcal{I}_{\le 8}.\] 
Finally we turn to the relations of degree $10$. 
Our study of the degree $8$ relations implies that any product 
$\pi_{i_1,i_2,i_3}\pi_{j_1,j_2,j_3}$ 
is congruent modulo $\mathcal{K}+\mathcal{H}$ to a linear combination of 
elements from the $S_3$-orbit of $\pi_{2,1,1}\pi_{1,2,1}$ and 
products of the variables of $\mathcal{F}(4,3)$ 
involving at most one factor of the form $\pi_{k_1,k_2,k_3}$. 
Moreover, 
\[\pi_{2,1,1}\pi_{1,2,1}\rho_{i,j,k}\equiv_{\mathcal{I}} 0\] 
by \eqref{eq:p_211q_110} and \eqref{eq:p_211q_011}. 
Taking into account \eqref{eq:ABC} we conclude that 
\begin{align} \label{eq:F(3)_10} 
\mathcal{F}(4,3)_{10} \text{ is spanned modulo }\mathcal{H}+\mathcal{K}  \text{ by products involving at most one variable} 
\\ \notag  \text{ of the form }\pi_{i_1,i_2,i_3} 
\text{ and not divisible by any element of }A\cup B\cup C.
\end{align} 
For a decreasing $\alpha$ with $\alpha_1+\alpha_2+\alpha_3=10$ denote by 
$\mathcal{V}_{\alpha}$ the subset of $\mathcal{M}_{\alpha}\setminus \mathcal{T}_{\alpha}$ 
obtained by removing  the products divisible by some $\pi_{i_1,i_2,i_3}\pi_{j_1,j_2,j_3}$ or 
by any element of $A\cup B\cup C$. By \eqref{eq:F(3)_10} it is sufficient to prove that 
every element of $\mathcal{V}_{\alpha}$ is congruent modulo 
$\mathcal{K}+\mathcal{H}$ to an element in $\mathrm{Span}_{\mathbb{C}}\mathcal{T}_{\alpha}$. 
The table below gives $\mathcal{T}_{\alpha}$ and $\mathcal{V}_{\alpha}$ for all decreasing $\alpha$ with $\alpha_3\neq 0$ and $\sum\alpha_i=10$.

\[\begin{array}{c||c|c} 
\alpha &  \mathcal{T}_{\alpha} &\mathcal{V}_{\alpha}  
\\ \hline \hline 
(8,1,1) & & \\ \hline
(7,2,1) & & \\ \hline 
(6,3,1) & & 
\\ \hline 
(6,2,2) &  & 
\\ \hline 
(5,4,1) & & \rho_{1,1,0}^4\rho_{1,0,1}  
\\ & & 
\\ \hline 
(5,3,2) & & \rho_{1,1,0}^3\rho_{1,0,1}^2
\\ \hline 
(4,4,2) & &  
\\ \hline 
(4,3,3) & \pi_{3,1,0}\rho_{1,0,1}\rho_{0,1,1}^2 & \pi_{1,3,0}\rho_{1,0,1}^3, 
\ \pi_{1,0,3}\rho_{1,1,0}^3
\\ \hline
\end{array}\]
We see that $\mathcal{V}_{\alpha}$ is non-empty only for 
$\alpha\in\{(5,4,1),\ (5,3,2),\ (4,3,3)\}$. 

$\mathbf{(5,4,1)}$: The relation $\mathcal{R}_{2n-4,4}$ in the special case $n=4$ is 
\begin{equation*}
\mathcal{R}(4)_{4,4}:=\pi_{4,0,0}\pi_{0,4,0}-4\pi_{3,1,0}\pi_{1,3,0}+3\pi_{2,2,0}^2-16(\rho_{1,1,0}^2-\rho_{2,0,0}\rho_{0,2,0})^2. 
\end{equation*} 
Apply $\frac 14E_{3,2}$ to $\mathcal{R}(4)_{4,4}$ we  get 
\begin{align*} 
\mathcal{R}(4)_{4,4}^{4,3,1}:=\frac 14E_{3,2}.\mathcal{R}(4)_{4,4}=
\pi_{4,0,0}\pi_{0,3,1}-\pi_{3,0,1}\pi_{1,3,0}-3\pi_{3,1,0}\pi_{1,2,1}
+3\pi_{2,2,0}\pi_{2,1,1}
\\ -16(\rho_{2,0,0}\rho_{0,2,0}-\rho_{1,1,0}^2)(\rho_{2,0,0}\rho_{0,1,1}-\rho_{1,1,0}\rho_{1,0,1}). 
\end{align*} 
We deduce from $\mathcal{R}(4)_{4,4}^{4,3,1}\in \mathcal{K}$ that 
\begin{equation}\label{eq:q_110^3q_101} 
\rho_{1,1,0}^3\rho_{1,0,1}\equiv  \frac{1}{16}(-\pi_{3,0,1}\pi_{1,3,0}-3\pi_{3,1,0}\pi_{1,2,1}+3\pi_{2,2,0}\pi_{2,1,1}). 
\end{equation}
Multiplying \eqref{eq:q_110^3q_101} by $\rho_{1,1,0}$ and using 
\[\pi_{1,3,0}\rho_{1,1,0}\equiv 0,\ \pi_{3,1,0}\rho_{1,1,0}\equiv 0,\ 
\pi_{2,2,0}\rho_{1,1,0}\equiv 0\] 
we get 
\[\rho_{1,1,0}^4\rho_{1,0,1} \equiv_{\mathcal{I}} 0.\] 

$\mathbf{(5,3,2)}$: 
Multiplying \eqref{eq:q_110^3q_101} by $\rho_{1,0,1}$ and using 
\[\pi_{3,0,1}\rho_{1,0,1}\equiv_{\mathcal{I}} 0,\ \pi_{1,2,1}\rho_{1,0,1}\equiv_{\mathcal{I}} 0,\ 
\pi_{2,1,1}\pi_{1,0,1}\equiv_{\mathcal{I}} 0\] 
we get
\[\rho_{1,1,0}^3\rho_{1,0,1}^2\equiv_{\mathcal{I}} 0.\] 

$\mathbf{(4,3,3)}$: 
Applying the transposition $(1,2)\in S_3$ to \eqref{eq:p_220q_101} we get 
\[\pi_{1,3,0}\rho_{1,0,1}\equiv_{\mathcal{I}}\pi_{2,2,0}\rho_{0,1,1}.\] 
Multiplying this by $\rho_{1,0,1}^2$ we get 
\[\pi_{1,3,0}\rho_{1,0,1}^3\equiv_{\mathcal{I}}\pi_{2,2,0}\rho_{1,0,1}^2\rho_{0,1,1}
\equiv_{\mathcal{I}}\pi_{3,1,0}\rho_{1,0,1}\rho_{0,1,1}^2\] 
(for the second congruence see \eqref{eq:p_220q_101}). 
Finally, applying the transposition $(2,3)\in S_3$ to the congruence 
$\pi_{1,3,0}\rho_{1,0,1}^3\equiv_{\mathcal{I}} \pi_{3,1,0}\rho_{1,0,1}\rho_{0,1,1}^2$ 
and using \eqref{eq:p_301q_110} we obtain 
\[\pi_{1,0,3}\rho_{1,1,0}^3\equiv_{\mathcal{I}} -\pi_{3,1,0}\rho_{1,0,1}\rho_{0,1,1}^2.\]
\end{proofof} 

\begin{theorem}\label{thm:n=4} 
For arbitrary $m\ge 3$ the kernel of $\varphi(4,m):\mathcal{F}(4,m)\to \mathbb{C}[V^m]^{D_8}$ is minimally generated as a 
$GL$-ideal by 
$\mathcal{R}_{2,2,2}$, $\mathcal{R}(4)_{4,2}$, $\mathcal{R}(4)_{6,2}$, $\mathcal{R}(4)_{4,4}$. 
\end{theorem}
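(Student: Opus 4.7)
My plan is to combine Theorem~\ref{thm:m=3 sufficient} with Proposition~\ref{prop:n=4, m=3}. The former says that for $m\ge 3$, the ideal $\ker(\varphi(4,m))$ is generated as a $GL$-ideal by the degree-$\le 2n+2=10$ elements of $\ker(\varphi(4,3))$, and the latter says these in turn are contained in the $GL_3$-ideal generated by $\mathcal{R}_{2,2,2},\mathcal{R}(4)_{4,2},\mathcal{R}(4)_{6,2},\mathcal{R}(4)_{4,4}$. Since each of these four elements lies in $\mathcal{F}(4,3)\subset \mathcal{F}(4,m)$, the $GL_m(\mathbb{C})$-ideal they generate in $\mathcal{F}(4,m)$ contains $\ker(\varphi(4,3))_{\le 10}$, and hence equals $\ker(\varphi(4,m))$. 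This gives generation.

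For minimality, by complete reducibility of polynomial $GL_m(\mathbb{C})$-modules it suffices to check that the four $GL_m(\mathbb{C})$-submodules $\langle\mathcal{R}_{2,2,2}\rangle$, $\langle\mathcal{R}(4)_{4,2}\rangle$, $\langle\mathcal{R}(4)_{6,2}\rangle$, $\langle\mathcal{R}(4)_{4,4}\rangle$ (of types $S^{(2,2,2)}(\mathbb{C}^m)$, $S^{(4,2)}(\mathbb{C}^m)$, $S^{(6,2)}(\mathbb{C}^m)$, $S^{(4,4)}(\mathbb{C}^m)$ respectively) have non-zero and pairwise-independent images in the quotient $\ker(\varphi(4,m))/(\mathcal{F}(4,m)_+\ker(\varphi(4,m)))$. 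For the two degree-$6$ generators this is immediate: $\mathcal{F}(4,m)_+\ker(\varphi(4,m))$ is concentrated in degrees $\ge 2+6=8$, so in degree $6$ the quotient is all of $\ker(\varphi(4,m))_6$, and two non-isomorphic irreducibles sit as independent summands there.

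The more delicate step is to handle the two degree-$8$ generators, for which I propose a multidegree argument exploiting that $\mathcal{R}(4)_{6,2}$ and $\mathcal{R}(4)_{4,4}$ lie in the subalgebra $\mathcal{F}(4,2)\subset\mathcal{F}(4,m)$. If either belonged to the $GL$-ideal of the other three, then its component in multidegree $(6,2,0,\dots,0)$ or $(4,4,0,\dots,0)$ respectively would too. The weights of $S^{\lambda}(\mathbb{C}^m)$ with support in the first two coordinates are enumerated by semistandard Young tableaux of shape $\lambda$ with entries in $\{1,2\}$: this enumeration yields no weights for $\lambda\in\{(2,2,2),(6,2)\}$ (a column of length $3$ would force a third distinct value), three weights $(4,2),(3,3),(2,4)$ for $\lambda=(4,2)$, and one weight $(4,4)$ for $\lambda=(4,4)$. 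A short weight-matching then shows that the relevant multidegree components of the respective ideals consist only of products of the form $\rho_{\star}\rho_{\star}\pi_{\star}$ (exactly one $\pi$-factor per monomial); for instance, the multidegree-$(6,2,0,\dots)$ component of the $GL$-ideal of $\{\mathcal{R}_{2,2,2},\mathcal{R}(4)_{4,2},\mathcal{R}(4)_{4,4}\}$ is the one-dimensional line $\mathbb{C}\rho_{2,0,0}\mathcal{R}(4)_{4,2}$. Since $\mathcal{R}(4)_{6,2}$ contains the purely-$\pi$ monomial $-\pi_{3,1,0}^2$ and $\mathcal{R}(4)_{4,4}$ contains two-$\pi$-factor monomials such as $3\pi_{2,2,0}^2$, neither fits, so both are necessary. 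The main technical obstacle is the weight book-keeping in this last step, but it reduces to a short tableau count plus the clean observation that the number of $\pi$-factors per monomial is a strong invariant.
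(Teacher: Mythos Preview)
Your generation argument is correct and matches the paper's: it is exactly the combination of Theorem~\ref{thm:m=3 sufficient} and Proposition~\ref{prop:n=4, m=3} that the paper invokes.

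For minimality, however, there is a genuine error in your multidegree argument for $\mathcal{R}(4)_{4,4}$. You assert that $S^{(6,2)}(\mathbb{C}^m)$ has no weights supported on the first two coordinates, giving as reason that ``a column of length $3$ would force a third distinct value''. But the partition $(6,2)$ has columns of length at most $2$; there are five semistandard tableaux of shape $(6,2)$ with entries in $\{1,2\}$, with weights $(6,2),(5,3),(4,4),(3,5),(2,6)$. In particular the weight-$(4,4)$ vector of $\langle\mathcal{R}(4)_{6,2}\rangle_{GL_m(\mathbb{C})}$ is non-zero, and (being obtained from $\mathcal{R}(4)_{6,2}$ by applying $E_{2,1}^2$) it \emph{does} contain two-$\pi$ monomials, e.g.\ a multiple of $\pi_{2,2}^2$. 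So your claim that the multidegree-$(4,4)$ part of the $GL$-ideal generated by $\{\mathcal{R}_{2,2,2},\mathcal{R}(4)_{4,2},\mathcal{R}(4)_{6,2}\}$ consists only of one-$\pi$ monomials is false, and the argument for the necessity of $\mathcal{R}(4)_{4,4}$ breaks down as written. (It can be repaired: compare the two-$\pi$ parts of $\mathcal{R}(4)_{4,4}$ and of $E_{2,1}^2.\mathcal{R}(4)_{6,2}$ and observe they are not proportional; but this is not what you wrote.)

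The paper takes a cleaner route and avoids this case analysis altogether. Since all four generators lie in $\mathcal{F}(4,3)$, one checks (by restricting to multidegrees supported on the first three coordinates, using that weights of $S^\lambda(\mathbb{C}^m)$ supported there form exactly $S^\lambda(\mathbb{C}^3)$ when $\mathrm{ht}(\lambda)\le 3$) that the $GL_m(\mathbb{C})$-ideal they generate, intersected with $\mathcal{F}(4,3)$, is precisely the $GL_3(\mathbb{C})$-ideal. Hence redundancy of any generator for $m$ would force redundancy for $m=3$, contradicting the minimality already established in Proposition~\ref{prop:n=4, m=3}. This is why the paper can simply say ``immediate consequence''.
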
 

\begin{proof}
This is an immediate consequence of Proposition~\ref{prop:n=4, m=3} and 
Theorem~\ref{thm:m=3 sufficient}. 
\end{proof} 

The symmetric group $S_3$ is isomorphic to the dihedral group $D_6$ of order $6$, 
and the natural $3$-dimensional permutation representation of $S_3$ can be identified with the sum of the trivial representation and the defining $2$-dimensional representation of $D_6$. Therefore \cite[Theorem 3.1]{domokos-puskas} dealing with multisymmetric polynomials can be restated in the notation of the present paper as follows: 

\begin{theorem}\label{thm:n=3} 
For arbitrary $m\ge 3$ the kernel of $\varphi(3,m):\mathcal{F}(3,m)\to \mathbb{C}[V^m]^{D_6}$ is minimally generated as a 
$GL$-ideal by 
$\mathcal{R}_{2,2,2}$, $\mathcal{R}(3)_{3,2}$, $\mathcal{R}(3)_{4,2}$. 
\end{theorem}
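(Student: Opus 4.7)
The plan is to deduce Theorem~\ref{thm:n=3} by translating \cite[Theorem 3.1]{domokos-puskas} into the dihedral setup via the two identifications highlighted in the paragraph preceding the statement: the abstract isomorphism $S_3\cong D_6$, and the decomposition of the natural $3$-dimensional permutation representation of $S_3$ as $\mathbb{C}^3 = L\oplus V$, where $L=\mathbb{C}\cdot(1,1,1)$ is the trivial $1$-dimensional subrepresentation and $V=\{(a,b,c)\mid a+b+c=0\}$ corresponds under the isomorphism to the defining $2$-dimensional representation of $D_6$.

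First I would pass the invariant rings through this decomposition. Since $D_6$ acts trivially on $L$, the diagonal action of $S_3\cong D_6$ on $(\mathbb{C}^3)^m=L^m\oplus V^m$ yields a $GL_m(\mathbb{C})$-module algebra isomorphism
\[
\mathbb{C}[(\mathbb{C}^3)^m]^{S_3}\;\cong\; \mathbb{C}[V^m]^{D_6}\otimes \mathbb{C}[L^m],
\]
where $\mathbb{C}[L^m]=\mathbb{C}[u_1,\dots,u_m]$ is a polynomial ring carrying the standard $GL_m(\mathbb{C})$-module structure on $\mathrm{Span}\{u_1,\dots,u_m\}$. The classical generators of $\mathbb{C}[(\mathbb{C}^3)^m]^{S_3}$ used in \cite{domokos-puskas} are the polarizations of the elementary symmetric polynomials $e_1,e_2,e_3$. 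Under the above splitting, the polarizations of $e_1$ form a basis of $\mathbb{C}[L^m]_1$, while the restrictions of the polarizations of $e_2$ and $e_3$ to $V^m$ are non-zero scalar multiples of the polarizations of $q$ and $p$ respectively (by degree reasons, $D_6$-invariance, and the fact that $q,p$ minimally generate $\mathbb{C}[V]^{D_6}$ in degrees $2$ and $3$). Since $\mathbb{C}[L^m]$ is a free polynomial tensor factor, a minimal $GL_m(\mathbb{C})$-ideal generating system of the relations among the polarizations of $e_2,e_3$ corresponds bijectively to a minimal $GL_m(\mathbb{C})$-ideal generating system of the relations among all polarizations of $e_1,e_2,e_3$, and vice versa. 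Hence Theorem~\ref{thm:n=3} becomes equivalent to \cite[Theorem 3.1]{domokos-puskas}.

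The last step is the explicit matching of irreducible $GL_m(\mathbb{C})$-submodules. By \cite[Theorem 3.1]{domokos-puskas} (together with Proposition~\ref{prop:height 3} which forces $\mathrm{ht}(\lambda)\le 3$) the minimal generators of $\ker(\varphi(3,m))$ constitute three irreducible submodules whose labels are $(2,2,2)$, $(3,2)$ and $(4,2)$. It suffices to exhibit highest weight vectors of these weights lying in $\ker(\varphi(3,m))$: Lemma~\ref{lemma:R_2,2,2} does this for $\mathcal{R}_{2,2,2}$ of weight $(2,2,2)$, and the proof of Theorem~\ref{thm:m=2}(i), specialized to $n=3$, does this for $\mathcal{R}(3)_{3,2}$ and $\mathcal{R}(3)_{4,2}$ of weights $(3,2)$ and $(4,2)$. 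By uniqueness of the highest weight vector in each isotypic component, these three elements are forced to generate the corresponding irreducible $GL$-submodules. The main obstacle is nothing technical but rather the careful bookkeeping required to confirm that the labels occurring in \cite[Theorem 3.1]{domokos-puskas} are exactly $(2,2,2)$, $(3,2)$, $(4,2)$ once the free tensor factor $\mathbb{C}[L^m]$ is stripped off; once this is checked, Theorem~\ref{thm:n=3} is immediate.
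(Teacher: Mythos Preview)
Your proposal is correct and follows exactly the route the paper takes: the paper gives no independent proof but simply asserts that Theorem~\ref{thm:n=3} is a restatement of \cite[Theorem 3.1]{domokos-puskas} via the identifications $S_3\cong D_6$ and $\mathbb{C}^3\cong L\oplus V$, and you have carried out precisely that translation in detail. The only superfluous step is your appeal to Proposition~\ref{prop:height 3}, which is not needed once the labels $(2,2,2)$, $(3,2)$, $(4,2)$ are read off directly from \cite{domokos-puskas}.
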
 

\begin{remark} \begin{itemize} \item[(i)] The method of the present paper would yield 
a shorter proof of Theorem~\ref{thm:n=3} than the proof in \cite{domokos-puskas}. 

\item[(ii)]For $n=2$ the group  $D_4$ is isomorphic to Klein's four group, so it is abelian and its action on $V$ is diagonalizable. 
In an appropriate basis we have  $\mathbb{C}[V]^{D_4}=\mathbb{C}[x^2,y^2]$, 
and $\mathbb{C}[V^m]=\mathbb{C}[x_ix_j,y_iy_j\mid 1\le 1\le j\le m]$. 
The corresponding $GL$-ideal of relations among the generators 
of $\mathbb{C}[V^m]^{D_4}$ is generated by the two relations 
$x_1^2x_2^2-(x_1x_2)^2=0$ and  $y_1^2y_2^2-(y_1y_2)^2=0$. 
For some results on presentations of rings of invariants of abelian groups see 
for example \cite{domokos_graz} and the references therein. 
\end{itemize}
\end{remark} 


\section{Some computations} 

Next we determine the $GL_m(\mathbb{C})$-module structure of $\mathbb{C}[V^m]^{D_{2n}}$.  It turns out that the multiplicities of the irreducible summands 
are conveniently expressed in terms of the coefficients of the Hilbert series 
of $\mathbb{C}[V^m]^{D_{2n}}$. 
Denote by $h(d)$ the dimension of the degree $d$ homogeneous component of $\mathbb{C}[x,y]^{D_{2n}}$. Note that in the formal power series ring $\mathbb{Z}[[t]]$ we have the equality 
\begin{equation}
\label{eq:Hilbert series}
\sum_{d=0}^{\infty}h(d)t^d=\frac{1}{(1-t^2)(1-t^n)}.
\end{equation}  

\begin{proposition}\label{prop:GL-structure of invariants} 
The multiplicity of $S^{\lambda}(\mathbb{C}^m)$ as a summand in $\mathbb{C}[V^m]^{D_{2n}}$ is 
non-zero only if $\mathrm{ht}(\lambda)\le 2$, and in this case the multiplicity is 
\[\begin{cases} h(\lambda_1-\lambda_2) &\text{ if }2\mid \lambda_2 \\
h(\lambda_1-\lambda_2-n) &\text{ if }2\nmid \lambda_2.\end{cases} \]
\end{proposition}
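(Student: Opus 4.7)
The plan is to reduce the problem to invariant-theoretic computations for $D_{2n}$ acting on a single copy of $V$, by exploiting the commuting action of $GL(V)$ and $GL_m(\mathbb{C})$ on $\mathbb{C}[V^m]$.

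First I would apply Cauchy's formula to the $GL(V) \times GL_m(\mathbb{C})$-module identification $(V^m)^* \cong V^* \otimes \mathbb{C}^m$ (sending $x_i \leftrightarrow x\otimes e_i$, $y_i \leftrightarrow y\otimes e_i$), obtaining
\[\mathbb{C}[V^m] \cong \bigoplus_{\mathrm{ht}(\lambda)\le \min(2,m)} S^\lambda(V^*) \otimes S^\lambda(\mathbb{C}^m).\]
Taking $D_{2n}$-invariants, which commutes with the $GL_m(\mathbb{C})$-action, yields
\[\mathbb{C}[V^m]^{D_{2n}} \cong \bigoplus_{\mathrm{ht}(\lambda)\le 2} S^\lambda(V^*)^{D_{2n}} \otimes S^\lambda(\mathbb{C}^m),\]
which immediately gives the claimed height restriction and identifies the multiplicity of $S^\lambda(\mathbb{C}^m)$ as $\dim S^\lambda(V^*)^{D_{2n}}$. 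Since the defining representation of $D_{2n}$ is real and hence self-dual (explicitly, the pairing $V\times V\to\mathbb{C}$ induced by $q=xy$ is $D_{2n}$-invariant), $V\cong V^*$ as $D_{2n}$-modules, so the multiplicity equals $\dim S^\lambda(V)^{D_{2n}}$.

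Next I would split into parity cases. Since $\dim V=2$ one has $S^{(\lambda_1,\lambda_2)}(V)\cong S^{\lambda_1-\lambda_2}(V)\otimes(\det V)^{\otimes \lambda_2}$, and inspection of the two generators shows that $\det V$ is the sign character $\epsilon$ of $D_{2n}$, trivial on the cyclic rotation subgroup $H$ and equal to $-1$ on the reflection. For $\lambda_2$ even, $\epsilon^{\lambda_2}$ is trivial and the multiplicity equals $\dim S^{\lambda_1-\lambda_2}(V)^{D_{2n}}=h(\lambda_1-\lambda_2)$ by the very definition of $h$.

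For $\lambda_2$ odd, the multiplicity becomes the dimension in degree $\lambda_1-\lambda_2$ of the module $\mathbb{C}[V]_\epsilon$ of $\epsilon$-semi-invariants, whose Hilbert series I would compute via the $\mathbb{Z}/2$-isotypic decomposition $\mathbb{C}[V]^H=\mathbb{C}[V]^{D_{2n}}\oplus \mathbb{C}[V]_\epsilon$ induced by $D_{2n}/H\cong\mathbb{Z}/2$. The description $\mathbb{C}[V]^H=\mathbb{C}[x^n,y^n,xy]/(x^ny^n-(xy)^n)$ gives the Hilbert series of $\mathbb{C}[V]^H$ as $(1+t^n)/((1-t^2)(1-t^n))$; subtracting $\sum h(d)t^d$ produces $t^n/((1-t^2)(1-t^n))=t^n\sum h(d)t^d$, so $\dim(\mathbb{C}[V]_\epsilon)_d=h(d-n)$, which yields the odd-$\lambda_2$ formula $h(\lambda_1-\lambda_2-n)$. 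The only step requiring real care is the Cauchy decomposition together with its attendant functorial identifications; after that, the remaining argument is elementary Hilbert series bookkeeping.
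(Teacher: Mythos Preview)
Your proof is correct and follows essentially the same route as the paper: both apply the Cauchy decomposition to $\mathbb{C}[V^m]$ as a $GL_2\times GL_m$-module, take $D_{2n}$-invariants, and then use $S^{(\lambda_1,\lambda_2)}(V^*)\cong S^{\lambda_1-\lambda_2}(V^*)\otimes\det^{\lambda_2}$ to reduce to the one-variable case. The only cosmetic differences are that you pass from $V^*$ to $V$ via self-duality and that you spell out the Hilbert-series computation for the $\epsilon$-semi-invariants explicitly, whereas the paper simply invokes ``the well-known description of $\mathbb{C}[V]$ as a $D_{2n}$-module.''
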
 

\begin{proof} 
Identify $V^m$ with the space $\mathbb{C}^{2\times m}$ of $2\times m$ matrices, endowed with the 
$GL_2(\mathbb{C})\times GL_m(\mathbb{C})$-action $(g,h)\cdot A:=gAh^{-1}$ for $g\in GL_2(\mathbb{C})$, 
$h\in GL_m(\mathbb{C})$, and $A\in \mathbb{C}^{2\times m}$. This induces an action of $GL_2(\mathbb{C})\times GL_m(\mathbb{C})$ on $\mathbb{C}[V^m]$ via $\mathbb{C}$-algebra automorphisms in the standard way: 
$((g,h)\cdot f)(A)=f(g^{-1}Ah)$ for $g\in GL_2(\mathbb{C})$, 
$h\in GL_m(\mathbb{C})$, $A\in \mathbb{C}^{2\times m}$ and $f\in \mathbb{C}[V^m]$. 
Note that the restriction of this action to the subgroup $GL_m(\mathbb{C})$ 
agrees with the $GL_m(\mathbb{C})$-action on $\mathbb{C}[V^m]$ considered in the previous sections.   
By the Cauchy Formula (see for example \cite[Section 9.6.3]{procesi}) we have 
\[\mathbb{C}[V^m]_d\cong \sum_{\lambda\in\mathrm{Par}_m(d),\ \mathrm{ht}(\lambda)\le 2}S^{\lambda}(V^*)\otimes S^{\lambda}(\mathbb{C}^m)\] 
as $GL_2(\mathbb{C})\times GL_m(\mathbb{C})$-modules. 
It follows that we have the following isomorphism of $GL_m(\mathbb{C})$-modules: 
\[\mathbb{C}[V^m]^{D_{2n}}_d\cong 
\sum_{\lambda\in\mathrm{Par}_m(d),\ \mathrm{ht}(\lambda)\le 2}
S^{\lambda}(V^*)^{D_{2n}}\otimes  S^{\lambda}(\mathbb{C}^m).\]
Note that for a partition $\lambda=(\lambda_1,\lambda_2)$,  we have the 
$D_{2n}$-module isomorphism 
$S^{\lambda}(V^*)\cong S^{\lambda_1-\lambda_2}(V^*)$ when $\lambda_2$ is even, 
whereas for $\lambda_2$ odd we have 
$S^{\lambda}(V^*)\cong \det\otimes S^{\lambda_1-\lambda_2}(V^*)$, where 
$\det$ stands for the $1$-dimensional representation of $D_{2n}$ obtained by composing the determinant with the $2$-dimensional defining representation of $D_{2n}$. 
Now the result follows from the well-known description of $\mathbb{C}[V]\cong S(V^*)$ as a $D_{2n}$-module. 
\end{proof} 

For a fixed $n$ 
Proposition~\ref{prop:GL-structure of invariants} and Corollary~\ref{cor:GL-structure of F} allow us to compute the multiplicities of the simple $GL_3(\mathbb{C})$-module summands of the homogeneous components of $\ker(\bar\varphi(n ,3))$ up to degree $2n+2$. 
We shall do this computation for the case $n=4$. 
To simplify notation write $S(\lambda)$ for $S^{\lambda}(\mathbb{C}^m)$. 
Corollary~\ref{cor:GL-structure of F} yields 
\begin{align} \label{eq:F10}
(\bar{\mathcal{D}}(m)\otimes\mathcal{E}(4,m))_{\le 10}&\cong 
S(0)+S(2)+2S(4)+S(2,2)+2S(6)+S(5,1)
\\ \notag &+2S(4,2)+3S(8)+S(7,1)+4S(6,2)+S(5,3)+3S(4,4)
\\ \notag &+3S(10)+2S(9,1)+5S(8,2)+3S(7,3)+5S(6,4)
\\ \notag  &+S(5,2,1)+S(4,2,2)+2S(7,2,1)
 \\ \notag &+2S(6,3,1)+2S(5,4,1)+2S(6,2,2)
  \\ \notag &+S(5,3,2)+2S(4,4,2).
\end{align}

By Proposition~\ref{prop:GL-structure of invariants} and \eqref{eq:Hilbert series} we have 
\begin{align}\label{eq:GL-structure of D4-invariants} 
\mathbb{C}[V^m]^{D_{8}}_{\le 10}
& \cong S(0)+S(2)+2S(4)+S(2,2)+2S(6)+S(5,1)
\\ \notag &+S(4,2)+3S(8)+S(7,1)+2S(6,2)+S(4,4)+3S(10)
\\ \notag &+2S(9,1) +2S(8,2)+S(7,3)+S(6,4).  
\end{align}

Combining \eqref{eq:F10} and \eqref{eq:GL-structure of D4-invariants}  
we get 
\begin{align}\label{eq:GL-structure of ideal of D4-relations} 
\ker(\bar\varphi(4,m)_{\le 10})
&\cong S(4,2)+2S(6,2)+2S(4,4)+3S(8,2)+4S(6,4)
\\ \notag &+S(5,3)+2S(7,3)+S(5,2,1)+S(4,2,2)+2S(7,2,1)
\\ \notag &+2S(6,3,1)+2S(6,2,2)+2S(5,4,1)+2S(4,4,2)+S(5,3,2).
\end{align} 

An alternative approach to Theorem~\ref{thm:n=4} would be to find the highest weight vectors in the $GL$-ideal generated by $\mathcal{R}_{2,2,2}$, 
$\mathcal{R}(4)_{4,2}$, $\mathcal{R}(4)_{6,2}$, $\mathcal{R}(4)_{4,4}$ 
required by the decomposition \eqref{eq:GL-structure of ideal of D4-relations}. 
However, this seems to be more laborious than the approach in Section~\ref{sec:n=4} based on the Hironaka decomposition.


\end{document}